\documentclass[11pt]{article}

\usepackage[utf8]{inputenc}
\usepackage[T1]{fontenc}
\usepackage{lmodern}       
\usepackage[a4paper, top=3cm, bottom=3cm, left=2.5cm, right=2.5cm]{geometry}

\usepackage{amsmath,amssymb,amsfonts,amsthm,mathtools} 
\usepackage{mathrsfs}      
\usepackage{bm}            
\usepackage{stmaryrd}      
\usepackage{tikz, tikz-cd} 
\usepackage{graphicx}      
\usepackage{float}        
\usepackage{subcaption}    
\usepackage{xcolor}        
\usepackage{hyperref}      
\hypersetup{colorlinks=true, linkcolor=blue, citecolor=blue, urlcolor=blue}
\usepackage{enumitem}      
\usepackage{microtype} 
\usepackage{graphicx}
\usepackage{authblk} 
\DeclareUnicodeCharacter{2212}{\ensuremath{-}}
\usepackage[percent]{overpic}
\newtheorem{thm}{Theorem}[section]
\newtheorem{lem}[thm]{Lemma}
\newtheorem{pro}[thm]{Proposition}
\newtheorem{cor}[thm]{Corollary}

\newtheorem{RHP}[thm]{RH problem}
\theoremstyle{definition}

\newtheorem{assumption}{Assumption}
\theoremstyle{remark}
\newtheorem{rmk}[thm]{Remark}

\numberwithin{equation}{section}

\newcommand{\R}{\mathbb R}
\newcommand{\C}{\mathbb C}

\def\dn{\mathrm{dn}}
\newcommand{\bJ}{\mathbf{J}}

\title{\bf Direct Scattering for the KdV Equation with a Step-like Finite-Gap Potential: A Riemann--Hilbert Approach}
\author[1,2]{Xiaodong Zhu\thanks{Email: \texttt{xdzbnu@mail.bnu.edu.cn}}}
\affil[1]{Laboratory of Mathematics and Complex Systems (Ministry of Education), School of Mathematical Sciences, Beijing Normal University, Beijing 100875, China}
\affil[2]{{SISSA, via Bonomea 265, 34136 Trieste, Italy, INFN Sezione di Trieste}}
\date{}

\begin{document}
\maketitle

\begin{abstract}
We develop the direct scattering theory for the KdV equation with step-like finite-gap backgrounds under perturbations. More precisely, we consider initial data that asymptotically approach two distinct one-gap periodic travelling wave solutions as \(x \to \pm \infty\). Under suitable assumptions on the perturbation, we formulate the direct scattering problem and establish the analytic structure of the associated scattering data. In particular, we reformulate the problem in terms of a vector Riemann--Hilbert problem, which provides a foundation for the study of long-time asymptotics of perturbed finite-gap potentials. This formulation highlights the connection between step-like finite-gap scattering theory and the Riemann--Hilbert framework arising in soliton-gas type settings.
\end{abstract}
\tableofcontents
\section{Introduction}
In this paper, we study the direct and inverse scattering problems for the Korteweg–de Vries (KdV) equation with step-like oscillatory initial data, which asymptotically approach two distinct periodic wave solutions as $x \to \pm \infty$. 
The KdV equation is given by
\begin{equation}\label{eq:KdV}
    u_t - 6 u u_x + u_{xxx} = 0,\quad (x,t)\in\mathbb{R}\times\mathbb{R}_+,
\end{equation}
where $u=u(x,t)$ is a real-valued function, and the subscripts denote partial derivatives with respect to the corresponding variables. This equation was originally derived to model the propagation of long, weakly nonlinear dispersive shallow-water waves, and it admits \textit{soliton solution}
\begin{equation*}
    u(x,t) = -2 \eta^2 \operatorname{sech}^2 \!\left( \eta \left( x - 4 \eta^2 t - x_0 \right) \right),
\end{equation*}
where $\eta > 0$ and $x_0 \in \mathbb{R}$ denotes the phase shift.

The KdV equation \eqref{eq:KdV} is a well-known integrable system, whose spatial part is associated with the one-dimensional Schr\"odinger operator. It admits the following Lax pair in operator form \cite{gardner_method_1967}:
\begin{equation}\label{Lax:operator}
   \begin{aligned}
  &\mathscr{L}\varphi = \left(-\partial_x^{2} + u\right)\varphi = \lambda^{2}\varphi,\\
  &\varphi_{t} = \mathscr{A}\varphi = \left[-4\partial_x^{3} 
     + 3\bigl(u\,\partial_x + u_x\bigr)\right]\varphi .
\end{aligned} 
\end{equation}

On the other hand, the KdV equation \eqref{eq:KdV} also admits the following matrix-form Lax pair:
\begin{equation}\label{Lax:matrix}
    \begin{aligned}
        &\begin{cases}
        \Psi_x = L\,\Psi,\\
        \Psi_t = A\,\Psi,
        \end{cases}\\[2mm]
    \end{aligned}
\end{equation}
with
\begin{equation*}
    \begin{aligned}
        &L=
        \begin{pmatrix}
        0 & 1\\
        -\lambda^2+u & 0
        \end{pmatrix},\qquad
        A=
        \begin{pmatrix}
        -u_x & 4\lambda^2+2u\\
        (u-\lambda^2)(4\lambda^2+2u)-u_{xx} & u_x
        \end{pmatrix}.
    \end{aligned}
\end{equation*}
In particular, the KdV equation \eqref{eq:KdV} can be recovered from the zero-curvature condition
\begin{equation}\label{zero curvature equation}
    L_t - A_x + [L,A] = 0,
\end{equation}
where $[L,A]=LA-AL$ denotes the commutator of $L$ and $A$.

In addition, the KdV equation~\eqref{eq:KdV} admits the following periodic travelling-wave solution, commonly referred to as a \textit{(quasi-)periodic finite-gap solution}~\cite{gurevich_nonstationary_1974}:
\begin{equation}\label{solution:periodic solution}
    u(x,t) = -\beta_3 - (\beta_1-\beta_3)\,
    \mathrm{dn}^2\!\left(
        \frac{\sqrt{\beta_1-\beta_3}}{\sqrt{2}}
        \left(x - 2(\beta_1+\beta_2+\beta_3)t - x_0\right)
        + K(m) \,\middle|\, m
    \right),
\end{equation}
where $\beta_1>\beta_2>\beta_3$, $x_0\in\mathbb{R}$ is an arbitrary constant, and $\mathrm{dn}(s\,|\,m)$ denotes the Jacobi elliptic function with period $2K(m)$. Here
\[
K(m)=\int_{0}^{\pi/2}\frac{d\vartheta}{\sqrt{1-m^2\sin^2\vartheta}}
\]
is the complete elliptic integral of the first kind, and the elliptic modulus $m$ is given by
\(
m^2=\frac{\beta_1-\beta_2}{\beta_1-\beta_3}.
\)
See Appendix~\ref{appendix:periodic solution} for further details.

Moreover, the \emph{Lax spectrum} (or the \emph{Bloch spectrum}; see Appendix~\ref{appendix:Bloch spectrum}) 
of the Lax pair~\eqref{Lax:matrix} with the potential~\eqref{solution:periodic solution} is defined as the set of 
$\lambda\in\mathbb{C}$ for which the associated eigenfunctions are bounded for all $x\in\mathbb{R}$, and is given by
\begin{equation}\label{spectral curve}
    \Sigma(\lambda^2)
    = \left\{ \lambda^2 \in \mathbb{R} \;\bigg|\; 
      \lambda^2 \in \Big(-\infty, -\frac{\beta_1 + \beta_2}{2}\Big] 
      \;\cup\; 
      \Big[-\frac{\beta_1 + \beta_3}{2}, -\frac{\beta_3 + \beta_2}{2}\Big] 
    \right\}.
\end{equation}
In particular, let $\beta_1=\eta_2^2+\eta_1^2,\beta_2=\eta_2^2-\eta_1^2$ and $\beta_3=\eta_1^2-\eta_2^2$, with $\eta_2>\eta_1>0$, we can get the following solution 
\begin{equation*}
u(x,t) = \eta_2^2 - \eta_1^2 
- 2\eta_2^2\, 
\mathrm{dn}^2\!\left(
  \eta_2\,[\,x - 2(\eta_1^2 + \eta_2^2)t\,- x_0]  + K(m)\, \middle|\, m
\right),\quad m=\frac{\eta_1}{\eta_2}.
\end{equation*}


In this manuscript, we study the direct scattering problem for the KdV equation~\eqref{eq:KdV} with step-like finite-gap potentials. The direct scattering problem for step-like operators has a clear physical motivation. Indeed, it appears, for example, in the analysis of alloys formed by two different semi-infinite one-dimensional crystals \cite{gesztesy_one-dimensional_1997}.
More precisely, let \( 0 < \eta_1^l < \eta_2^l \) and \( 0 < \eta_1^r < \eta_2^r \), and define
\begin{equation}\label{initial:ulur}
\begin{aligned}
    u_0^l(x)
    &= (\eta_2^l)^2 - (\eta_1^l)^2
        - 2(\eta_2^l)^2 \,
        \dn^2\!\left(
            \eta_2^l (x - x_0^l) + K(m^l)
            \,\Big|\, m^l
       \right),\\[0.3em]
    u_0^r(x)
    &= (\eta_2^r)^2 - (\eta_1^r)^2
        - 2(\eta_2^r)^2 \,
        \dn^2\!\left(
            \eta_2^r (x - x_0^r) + K(m^r)
            \,\Big|\, m^r
       \right),
\end{aligned}
\end{equation}
where \( x_0^l, x_0^r \in \mathbb{R} \) are phase shifts, and 
\( m^l = \eta_1^l / \eta_2^l \), \( m^r = \eta_1^r / \eta_2^r \). Assume that the initial data \( u_0(x) = u(x,0) \) satisfies the step-like finite-gap condition
\begin{equation}\label{initial}
\begin{aligned}
      &\int_0^{\infty}
      \left|\frac{d^n}{dx^n}\!\big(u_0(x) - u_0^r(x)\big)\right|
      (1+|x|^{m_0})\,dx < \infty,\\[0.3em]
      &\int_{-\infty}^0
      \left|\frac{d^n}{dx^n}\!\big(u_0(x) - u_0^l(x)\big)\right|
      (1+|x|^{m_0})\,dx < \infty,
\end{aligned}
\end{equation}
for \( 0 \le n \le n_0 \), where \( m_0 \ge 8 \) and \( n_0 \ge m_0 + 5 \).
If the initial data \( u_0(x) \) satisfies~\eqref{initial}, then by~\cite{egorova_cauchy_2011} there exists a unique classical solution 
\(
u(x,t) \in C^{n_0 - m_0 - 2}(\mathbb{R})\times C^1(\R)
\)
to the KdV equation~\eqref{eq:KdV}.
In \cite{trogdon_numerical_2014}, the authors develop a unified numerical framework for the KdV equation that enables the accurate computation of solutions arising from initial data asymptotically approaching (quasi-)periodic finite-gap profiles. In particular, Grava \textit{et al.} studied the direct and inverse scattering theory for the focusing nonlinear Schrödinger equation with step-like oscillatory initial data; see \cite{grava_direct_nodate}.

\begin{rmk}
Notice that the KdV equation possesses Galilean invariance: if $u(x,t)$ is a solution of \eqref{eq:KdV}, then for any constant $c\in\mathbb{R}$,
$\tilde u(x,t)=u(x-6ct,t)-c$
is also a solution of \eqref{eq:KdV}. In view of the Lax pair \eqref{Lax:operator}, we then have
$(-\partial_x^2+\tilde u)\varphi = (-\partial_x^2+u-c)\varphi = (\lambda^2-c)\varphi$.
Hence the spectral parameter is shifted by a constant, and we may always use a Galilean transformation to set the parameter $\beta_2+\beta_3$ in \eqref{spectral curve} to $0$. The associated Lax spectrum $\Sigma(\lambda^2)$ is then
$\Sigma(\lambda^2)=(-\infty,-\eta_2^2]\cup[-\eta_1^2,0]$.
Accordingly, throughout this work we fix the spectra of $u_0^l(x)$ and $u_0^r(x)$ so that they share the common endpoint $0$, see, for example, Figure~\ref{fig:example}.
\end{rmk}
\begin{figure}[H]
    \centering
    \includegraphics[width=0.9\linewidth]{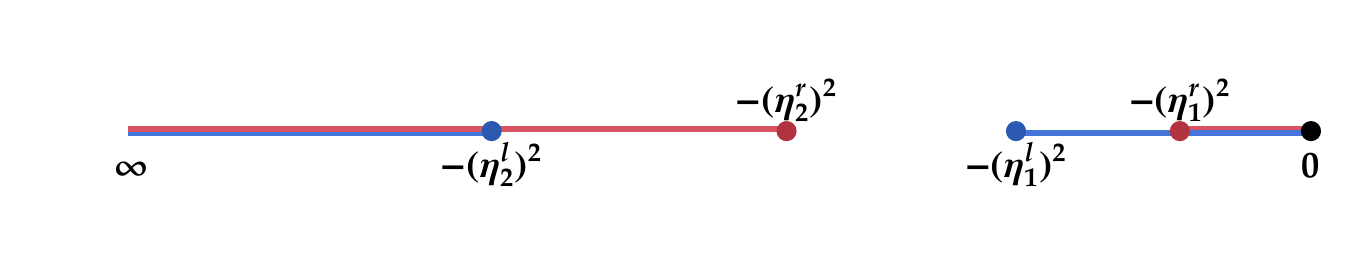}
 \caption{\small The Lax spectrum associated with the left background $u_0^l(x)$ (blue) and the right background $u_0^r(x)$ (red), which share the common endpoint $0$. The endpoints of the spectral bands for $u_0^l$ are given by $-(\eta_1^l)^2$ and $-(\eta_2^l)^2$, while the endpoints of the spectral bands for $u_0^r$ are given by $-(\eta_1^r)^2$ and $-(\eta_2^r)^2$.}
    \label{fig:example}
\end{figure}

\begin{rmk}
In what follows, we study the direct and inverse scattering problem in the $\lambda$-plane rather than working directly with the energy plane ($\lambda^2$). The Lax spectrum in energy plane illustrated in Figure~\ref{fig:example} corresponds to the configuration (iii) in Figure~\ref{fig:bands} in the $\lambda$-plane.
\end{rmk}

Although Egorova \textit{et al.}~\cite{egorova_cauchy_2009,egorova_cauchy_2011} studied the Cauchy problem for the KdV equation with step-like finite-gap initial data and proved that the KdV equation admits a unique classical solution $q(x,t)$ corresponding to the initial condition \eqref{initial} by means of the direct and inverse scattering method based on the Gelfand--Levitan--Marchenko (GLM) equations, the long-time asymptotic behavior of such solutions still deserves further investigation. In this manuscript, we formulate an associated Riemann--Hilbert (RH) problem, which provides a convenient framework for the analysis of the long-time asymptotics of these solutions.

\section{Main results}
Our results are formulated in terms of three reflection coefficients $r_1(\lambda)$, $r_2(\lambda)$, and $\rho(\lambda)$, which can be regarded as the corresponding reflection coefficients for the KdV equation \eqref{eq:KdV} determined by the initial data \eqref{initial}. These reflection coefficients are then used to formulate a RH problem, from which the solution $u(x,t)$ can be recovered.
\subsection*{The direct scattering problem}
Let $\Psi_0^l(x;\lambda)$ and $\Psi_0^r(x;\lambda)$, defined in \eqref{def:Psi0}, be the $2\times2$-valued eigenfunctions satisfy the spatial part \eqref{Lax:matrix} with respect to $u_0^l(x)$ and $u_0^r(x)$, respectively and their corresponding spectra are $\Sigma^l:=(i\eta_1^l,i\eta_2^l)\cup(-i\eta_2^l,-i\eta_1^l)$ and $\Sigma^r:=(i\eta_1^r,i\eta_2^r)\cup(-i\eta_2^r,-i\eta_1^r)$. Introduce the Jost solution $\Psi^s(x,t)$, with $s\in\{r,l\}$,  of \eqref{Lax:matrix} for the initial data $u_0(x)$ satisfying the condition \eqref{initial} such that
\begin{equation*}
    \Psi^l(x;\lambda)=\Psi_0^l(x;\lambda)(I+\mathcal{O}(|x|^{-1})),\quad x\to\infty_s,
\end{equation*}
with $\infty_{l}=-\infty$ and $\infty_{r}=+\infty$. Indeed, the Jost solutions $\Psi^s(x,\lambda)$ satisfy the integral equation
\begin{equation*}
    \Psi^s(x;\lambda)=\Psi_0^s(x;\lambda)+\int_{\infty_s}^x\Psi_0^s(x;\lambda)(\Psi_0^s(y;\lambda))^{-1}\Delta\tilde U^s(y)\Psi^s(y;\lambda)dy,
\end{equation*}
where 
$$
\Delta \tilde{U}^s(x):=\begin{pmatrix}
    0&0\\
    u_0^s(x)-u_0(x)&0
\end{pmatrix}.
$$

Define the transformed Jost solutions $\Phi^s(x;\lambda)$ by
\[
\Phi^s(x;\lambda):=
\begin{pmatrix}
1 & 1 \\
-i\lambda & i\lambda
\end{pmatrix}\Psi^s(x;\lambda),
\]
then $\Phi^s(x;\lambda)$ satisfies Proposition~\ref{prop:Phi}. In particular, let $\Phi_1^s(x;\lambda)$ and $\Phi_2^s(x;\lambda)$ denote the first and second columns of $\Phi^s(x;\lambda)$, respectively. Then the following properties hold:
\begin{itemize}
    \item $\Phi^l(x;\lambda)$ admits continuous boundary values on both sides of $\Sigma_1^l\cup\Sigma_2^l$ away from $\{\pm i\eta_1^l,\pm i\eta_2^l\}$. Moreover, $\Phi_1^l(x;\lambda)$ is analytic for $\lambda\in\mathbb{C}_+\setminus\Sigma_1^l$, and $\Phi_2^l(x;\lambda)$ is analytic for $\lambda\in\mathbb{C}_-\setminus\Sigma_2^l$.
    
    \item $\Phi^r(x;\lambda)$ admits continuous boundary values on both sides of $\Sigma_1^r\cup\Sigma_2^r$ away from $\{\pm i\eta_1^r,\pm i\eta_2^r\}$. Moreover, $\Phi_2^r(x;\lambda)$ is analytic for $\lambda\in\mathbb{C}_+\setminus\Sigma_1^r$, and $\Phi_1^r(x;\lambda)$ is analytic for $\lambda\in\mathbb{C}_-\setminus\Sigma_2^r$.
\end{itemize}

 We introduce the scattering data
\[
\begin{aligned}
a(\lambda)&=\det\bigl[\Phi_1^l(x;\lambda),\Phi_2^r(x;\lambda)\bigr],\qquad
b(\lambda)=\det\bigl[\Phi_1^r(x;\lambda),\Phi_1^l(x;\lambda)\bigr],\quad \lambda\in\mathbb{R}.
\end{aligned}
\]
For $\lambda\in\Sigma_1^r\cup\Sigma_1^l$, we define the boundary values of the scattering data $a(\lambda)$ and $b_1(\lambda)$ by
\[
a(\lambda_{\pm})=\det\bigl[\Phi_1^l(x;\lambda_{\pm}),\Phi_2^r(x;\lambda_{\pm})\bigr],\qquad
b_1(\lambda_{\pm})=\det\bigl[\Phi_1^r(x;\lambda_{\pm}),\Phi_1^l(x;\lambda_{\pm})\bigr],
\]
where the subscripts $\pm$ denote the limiting values from the left and right side of $\Sigma_1^r\cup\Sigma_1^k$, respectively. By Lemma~\ref{lem:ab}, it follows that $a(\lambda)$ admits an analytic continuation to
$\lambda\in\mathbb{C}_+\setminus(\Sigma_1^r\cup\Sigma_1^l)$. 

For simplicity, throughout this manuscript we restrict ourselves to the solitonless case, that is, we assume that $a(\lambda)$ has no zeros for
$\lambda\in\mathbb{C}_+\setminus(\Sigma_1^r\cup\Sigma_1^l)$.


\begin{assumption}[Absence of Solitons]\label{assumption}
    Assume the initial data $u_0(x)$ satisfies the condition \eqref{initial} and the corresponding  scattering data $a(\lambda)$ is nonzero for
$\lambda\in\mathbb{C}_+\setminus(\Sigma_1^r\cup\Sigma_1^l)$.
\end{assumption}

Using the auxiliary meromorphic function $h(\lambda)$ defined in \eqref{eq:h-lambda}, we introduce the modified scattering data $a_1(\lambda)$ and $a_2(\lambda)$ by
\begin{equation*}
    \begin{aligned}
        a_1(\lambda)&=\bigl(a(\lambda)/h(\lambda)\bigr)^{1/2},&&\lambda\in\C_+,\\
        a_2(\lambda)&=\bigl(a(\lambda)h(\lambda)\bigr)^{1/2},&&\lambda\in\C_+.
    \end{aligned}
\end{equation*}
These functions satisfy Proposition~\ref{pro:a12}. In terms of $a_1(\lambda)$ and $a_2(\lambda)$, we define the reflection coefficients $r_1(\lambda)$, $r_2(\lambda)$, and $\rho(\lambda)$ as
\begin{equation}\label{def:pectralfunction}
\begin{aligned}
    &r_1(\lambda)=\frac{a_2(\lambda_-)}{a_1(\lambda_-)}
    \frac{e^{-2ix_0^r\lambda}}{a_2(\lambda_-)a_1(\lambda_+)-ib_1^*(\lambda_-)}, 
    &&\lambda\in\Sigma_1^l,\\
    &r_2(\lambda)=\frac{a_1(\lambda_-)}{a_2(\lambda_-)}
    \frac{e^{2ix_0^r\lambda}}{a_1(\lambda_-)a_2(\lambda_+)+ib_1(\lambda_-)}, 
    &&\lambda\in\Sigma_1^r,\\
    &\rho(\lambda)=\frac{b(\lambda)}{a_1(\lambda)a_2^*(\lambda)}e^{-2ix_0^r\lambda},
    &&\lambda\in\R.
\end{aligned}
\end{equation}
We are now in a position to state the main lemma concerning the direct scattering problem.

\begin{lem}\label{lem:r1r2}
 Suppose that the initial datum $u_0$ satisfies the condition~\eqref{initial} and Assumption~\ref{assumption}.
The boundary values $r_{1}(\lambda_{\pm})$ are $m_0$--times differentiable on
\(
\Sigma_1^l \setminus \{ i\eta_1^l, i\eta_2^l \},
\)
and, as $\lambda \to i\eta_j^l$, $j=1,2$, they exhibit square--root behaviour.
Similarly, the boundary values $r_{2}(\lambda_{\pm})$ are $m_0$--times differentiable on
\(
\Sigma^r \setminus \{ i\eta_1^r, i\eta_2^r \},
\)
and, as $\lambda \to i\eta_j^r$, $j=1,2$, they exhibit square--root behaviour.

In particular, when $\lambda$ lies on the non--intersection part of the contour, the functions
$r_1(\lambda)$ and $r_2(\lambda)$ can be rewritten as
\begin{equation*}
    r_1(\lambda)
    = \frac{e^{-2i\lambda x_0^r}}{2\,a_1(\lambda_-)a_1(\lambda_+)},
    \qquad \lambda \in \Sigma_1^l \setminus \Sigma_1^r,
\end{equation*}
\begin{equation*}
    r_2(\lambda)
    = \frac{e^{-2i\lambda x_0^r}}{2\,a_2(\lambda_-)a_2(\lambda_+)},
    \qquad \lambda \in \Sigma_1^r \setminus \Sigma_1^l.
\end{equation*}

For $\lambda\in\mathbb{R}$, the spectral function $\rho(\lambda)$ is regular at $\lambda=0$ and belongs to $C^{m_0}(\mathbb{R})$. Moreover, as $\lambda\to\infty$, it satisfies
\(
|\rho(\lambda)|=\mathcal{O}\bigl(|\lambda|^{-4}\bigr).
\)
\end{lem}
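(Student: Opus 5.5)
We prove the lemma directly from the definitions \eqref{def:pectralfunction}, using the properties of the transformed Jost solutions in Proposition~\ref{prop:Phi}, the analytic continuation of $a$ in Lemma~\ref{lem:ab}, and the properties of $a_1,a_2$ in Proposition~\ref{pro:a12}. The argument has three steps: regularity of the building blocks, the simplification off the overlap of the bands, and the behaviour of $\rho$ on $\R$.

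\textbf{Step 1: regularity and edge behaviour.} Under \eqref{initial} with $m_0$ moments, the Volterra integral equation for $\Psi^s$ can be differentiated in $\lambda$ up to $m_0$ times. Each $\lambda$-derivative of the kernel $\Psi_0^s(x)\Psi_0^s(y)^{-1}$ produces at most one power of $|x-y|$, and these powers are absorbed by the weight $(1+|x|^{m_0})$. Hence the boundary values $\Phi^s(x;\lambda_\pm)$ are $C^{m_0}$ on the open bands $\Sigma_1^s\setminus\{i\eta_1^s,i\eta_2^s\}$, and so are $a(\lambda_\pm)$ and $b_1(\lambda_\pm)$. Near the edges $i\eta_j^s$, the background Baker--Akhiezer functions $\Psi_0^s$ in \eqref{def:Psi0} depend on the local uniformizing parameter $\sqrt{\lambda-i\eta_j^s}$. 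Therefore $a$ and $b_1$ are smooth functions of this root, and $a_1,a_2$ inherit the same structure, with $h$ chosen exactly to cancel the edge singularities. Since $a\neq0$ by Assumption~\ref{assumption}, the denominators in \eqref{def:pectralfunction} are $C^{m_0}$. I will check that they are nonvanishing on the bands by using a jump identity, for example a relation of the form $|a_2a_1|^2+\ldots=\det\Phi$-type normalizations. This gives the $C^{m_0}$ property and the square-root behaviour of $r_1$ and $r_2$.

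\textbf{Step 2: simplification on the non-overlap part.} Take $\lambda\in\Sigma_1^l\setminus\Sigma_1^r$. There $\Phi_2^r$ and $\Phi_1^r$ have no jump, while $\Phi_1^l$ jumps. Across the band, the Bloch solutions $\Phi_1^l(\lambda_+)$ and $\Phi_1^l(\lambda_-)$ are exchanged up to a factor coming from the normalization of $\Psi_0^l$. This yields a linear relation
\[
\Phi_1^l(\lambda_+)=\alpha(\lambda)\,\Phi_1^l(\lambda_-)+\beta(\lambda)\,\Phi_2^r(\lambda),
\]
where the Wronskian of the two boundary values $\Phi_1^l(\lambda_\pm)$ is known explicitly from the background. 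Taking determinants then expresses $b_1^*(\lambda_-)$ through $a(\lambda_+)$, $a(\lambda_-)$ and $h(\lambda_\pm)$. Substituting into $a_2(\lambda_-)a_1(\lambda_+)-ib_1^*(\lambda_-)$ and using the jump relation $h_+h_-=\ldots$ from Proposition~\ref{pro:a12}, the denominator should collapse to $2a_2(\lambda_-)a_1(\lambda_-)a_1(\lambda_+)/a_2(\lambda_-)$, which gives the stated formula for $r_1$. The computation for $r_2$ on $\Sigma_1^r\setminus\Sigma_1^l$ is symmetric, with the roles of $l$ and $r$ exchanged.

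\textbf{Step 3: $\rho$ on $\R$.} For $\lambda\in\R$ both Jost solutions are $C^{m_0}$, by the same moment argument as in Step 1. The point $\lambda=0$ is the common band edge, where $a$ and $b$ may individually blow up like $\lambda^{-1}$ through the factor $\begin{pmatrix}1&1\\-i\lambda&i\lambda\end{pmatrix}$. Since $\det\begin{pmatrix}1&1\\-i\lambda&i\lambda\end{pmatrix}=2i\lambda$, the singular parts of $b$ and of $a_1a_2^*$ are of the same order and cancel in the ratio, so $\rho$ is regular at $0$. For large $\lambda$, I will iterate the integral equation and integrate by parts in $y$. Each integration by parts of $\int e^{2i\lambda y}(u_0-u_0^s)\ldots$ gains a factor $\lambda^{-1}$, and using $n_0\ge m_0+5$ derivatives we obtain $b=\mathcal O(\lambda^{-4})$, while $a_1a_2^*\to1$. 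This gives $|\rho(\lambda)|=\mathcal{O}(|\lambda|^{-4})$.

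\textbf{Main obstacle.} The hardest part is the algebraic identity in Step 2, together with checking that the denominators in $r_1$ and $r_2$ do not vanish on the overlap $\Sigma_1^l\cap\Sigma_1^r$. I would first derive the band jump relations for $\Phi^l$ and $\Phi^r$ explicitly from the structure of $\Psi_0^s$ in \eqref{def:Psi0}, and then verify the identity by taking determinants.
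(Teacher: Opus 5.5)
\textbf{There is a genuine gap in Step~2}, which is the heart of the ``in particular'' part of the lemma. The simplification is never actually carried out, and the mechanism you propose is not the one that makes it work.

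Taking determinants of the Jost solutions on $\Sigma_1^l\setminus\Sigma_1^r$ only reproduces the jump relation $a(\lambda_+)=-i\,b_1^*(\lambda_-)$, which is already \eqref{eq:ba-jumps}. The function $h$ cannot come out of a Wronskian; it enters only through its prescribed jump.

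The missing idea is that $h$ was built so that, by \eqref{Jumps:h},
\[
h(\lambda_+)/h(\lambda_-)=-b_1^*(\lambda_+)/b_1^*(\lambda_-)=a(\lambda_-)/a(\lambda_+)\quad\text{on }\Sigma_1^l\setminus\Sigma_1^r .
\]
So $ah$, and hence $a_2$, has no jump there (Proposition~\ref{pro:a12}). Therefore
\[
a_2(\lambda_-)a_1(\lambda_+)=a_2(\lambda_+)a_1(\lambda_+)=a(\lambda_+)=-i\,b_1^*(\lambda_-),
\]
so the denominator $a_2(\lambda_-)a_1(\lambda_+)-ib_1^*(\lambda_-)$ equals $2a_2(\lambda_-)a_1(\lambda_+)$. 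The factor $a_2(\lambda_-)$ in the prefactor then cancels, giving $e^{-2i\lambda x_0^r}/(2a_1(\lambda_-)a_1(\lambda_+))$.

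For $r_2$ you use instead that $a_1$ is continuous on $\Sigma_1^r\setminus\Sigma_1^l$ and that $a(\lambda_+)=i\,b_1(\lambda_-)$. Carried through from \eqref{def:pectralfunction}, the exponential in the resulting formula is $e^{2i\lambda x_0^r}$.

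Two further problems with Step~2 as written:
\begin{itemize}
\item The relation you invoke, ``$h_+h_-=\ldots$'', is not the relevant one; the product identity $hh^*=1-|r|^2$ holds on $\R$, not on the bands.
\item The target you wrote, $2a_2(\lambda_-)a_1(\lambda_-)a_1(\lambda_+)/a_2(\lambda_-)=2a_1(\lambda_-)a_1(\lambda_+)$, would give $r_1=a_2(\lambda_-)e^{-2i\lambda x_0^r}/(2a_1(\lambda_-)^2a_1(\lambda_+))$, which is not the claimed formula.
\end{itemize}

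\textbf{Your edge analysis in Step~1 also points the wrong way.} The function $h$ does not cancel the endpoint singularities. The functions $a$ and $b_1$ are not smooth in $\sqrt{\lambda-i\eta_j^s}$; they carry fourth-root singularities inherited from $\gamma^s$ (Lemma~\ref{lem:ab-endpoints}). The local analysis of $h$ in Proposition~\ref{pro:a12} (item~4) gives $h\sim(\lambda-i\eta_j^l)^{1/4}$ and $h\sim(\lambda-i\eta_j^r)^{-1/4}$. This moves the singularity of $a$ entirely into $a_1$ at $i\eta_j^l$ and into $a_2$ at $i\eta_j^r$.

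It is precisely $a_1\sim(\lambda-i\eta_j^l)^{-1/4}$ that makes $1/(a_1(\lambda_-)a_1(\lambda_+))$, and hence $r_1$, behave like $(\lambda-i\eta_j^l)^{1/2}$. If $a_1$ were regular and nonzero at the edges, as your description suggests, $r_1$ would tend to a nonzero constant there instead of showing square-root behaviour.

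\textbf{On Step~3.} Your regularity argument at $0$ is in line with the paper's. The paper compares $a_1a_2^*$ with $a$ through the boundary value of $h$ on $\R$, and then uses that $a$ and $b$ have at most simple poles at $0$ (Lemma~\ref{lem:ab}).

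For the decay you can simply quote \eqref{expansion:b}. An integration-by-parts argument on its own would leave boundary terms where the two half-line representations meet. Their cancellation at orders $\lambda^{-2}$ and $\lambda^{-3}$ is exactly what the coefficient comparison in the proof of Lemma~\ref{lem:ab} supplies.
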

\begin{proof}
    See section \ref{section:RHP}.
\end{proof}

\begin{cor}
	If the initial data $u_0(x)$ are a Schwartz-type perturbation, i.e., they satisfy \eqref{initial} for arbitrary $m_0,n_0\in\mathbb{N}$, then the spectral function $\rho(\lambda)$ in Lemma \ref{lem:r1r2} is smooth on $\mathbb{R}$ and decays rapidly as $\lambda\to\infty$. Moreover, the boundary values of the reflection coefficients $r_1(\lambda)$ and $r_2(\lambda)$ on $\Sigma_1^l$ and $\Sigma_1^r$, respectively, are also smooth, except possibly at the endpoints, where they still exhibit the same square-root behavior.
\end{cor}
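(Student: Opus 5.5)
The corollary follows from Lemma~\ref{lem:r1r2} by running its argument with every $m_0$ at once, rather than from its statement alone. The hypothesis gives \eqref{initial} for every pair $(m_0,n_0)$. Fix an arbitrary $k\in\mathbb{N}$ and choose $m_0\ge\max(k,8)$ and $n_0\ge m_0+5$. Lemma~\ref{lem:r1r2} then applies, with the same scattering data $a,b,a_1,a_2,r_1,r_2,\rho$: these objects are defined from $u_0$ and the backgrounds $u_0^{l},u_0^{r}$ alone and do not depend on the auxiliary integers. So $\rho\in C^{m_0}(\mathbb{R})\subset C^{k}(\mathbb{R})$, and the boundary values $r_{1}(\lambda_\pm)$ and $r_2(\lambda_\pm)$ are $k$ times differentiable on $\Sigma_1^l\setminus\{i\eta_1^l,i\eta_2^l\}$ and $\Sigma_1^r\setminus\{i\eta_1^r,i\eta_2^r\}$. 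Since $k$ is arbitrary, all three are $C^\infty$ on these sets. The square-root behaviour at the endpoints is already asserted by the lemma for any admissible $m_0$, so it holds here too.

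The substantive point is rapid decay of $\rho$. The lemma only gives $\mathcal{O}(|\lambda|^{-4})$, and that bound does not improve with $m_0$ as stated, so I would reopen the large-$\lambda$ analysis behind it. The plan is as follows.
\begin{itemize}
\item Use the Volterra equations for $\Psi^s$ to obtain asymptotic expansions of the Jost solutions $\Phi^s(x;\lambda)$ as $\lambda\to\infty$ along $\mathbb{R}$. These come from repeated integration by parts in $\int_{\infty_s}^x \Psi_0^s(\Psi_0^s)^{-1}\Delta\tilde U^s\Psi^s\,dy$, where each step uses one more derivative of $u_0-u_0^s$.
\item Deduce from these expansions that $b(\lambda)=\det[\Phi_1^r,\Phi_1^l]$ is $\mathcal{O}(|\lambda|^{-N})$, with $N$ growing linearly in $n_0$.
\item Observe that $b$ is a Wronskian of two solutions that have the same oscillation $e^{-i\lambda x}$ at leading order. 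It therefore equals an oscillatory integral of the perturbation against bounded quasi-periodic (Bloch-type) factors, and it gains one power of $\lambda^{-1}$ per derivative of $u_0-u_0^s$ that is integrable.
\item Use that $a_1$ and $a_2$ tend to nonzero constants (their normalization at infinity from Proposition~\ref{pro:a12}). Hence $\rho=b/(a_1a_2^*)e^{-2ix_0^r\lambda}$ decays like $b$.
\end{itemize}
With $n_0$ arbitrary, this gives $\rho(\lambda)=\mathcal{O}(|\lambda|^{-N})$ for every $N$.

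I expect the main obstacle to be the decay of the derivatives $\rho^{(j)}$. Differentiating in $\lambda$ produces factors of $x$, which are absorbed by the moments $(1+|x|^{m_0})$; since both $m_0$ and $n_0$ are arbitrary, each $\rho^{(j)}$ is also $\mathcal{O}(|\lambda|^{-N})$. This gives rapid decay of $\rho$ in the Schwartz sense.
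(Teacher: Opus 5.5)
Your smoothness argument is sound: the scattering data do not depend on $(m_0,n_0)$, so Lemma~\ref{lem:r1r2} with $m_0\ge k$ for every $k$ gives $C^\infty$. You are also right that the $\mathcal{O}(|\lambda|^{-4})$ bound cannot simply be quoted.

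The gap is the decay mechanism in your third bullet. For step-like data, $b$ is not an oscillatory integral of one integrable perturbation against one set of Bloch factors. $\Phi_1^l$ is built on $u_0^l$ over $(-\infty,x]$ and $\Phi_1^r$ on $u_0^r$ over $[x,+\infty)$. Each half-line Volterra integral, under integration by parts, leaves a boundary term at $y=x$ at every order in $\lambda^{-1}$.

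Concretely, write $\Phi^s=O^s\mathbf{J}^s e^{-i(x-x_0^s)p^s\sigma_3}$ as in \eqref{Jost:Phi}. Combining $O^{(2)}$ in \eqref{expansion:O} with \eqref{u:genus-1}, the $\lambda^{-2}$ coefficient of $O^s_{21}$ equals $u_0^s/4$. So the background part of $b=\det[\Phi_1^r,\Phi_1^l]$ is, up to a unimodular phase, $\frac{u_0^l(x)-u_0^r(x)}{4\lambda^2}+\mathcal{O}(\lambda^{-3})$. The Volterra correction contributes $(J_2^s)_{21}=\frac{(u_0-u_0^s)(x)}{4}$ at the same order, with $J_1^s$ diagonal. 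Only the sum $\frac{u_0^s}{4}+\frac{u_0-u_0^s}{4}=\frac{u_0}{4}$, which is the same for $s=l$ and $s=r$, kills the $\lambda^{-2}$ coefficient.

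If the perturbative terms gained a power of $\lambda^{-1}$ per derivative, as you assert, $b$ would be of exact order $\lambda^{-2}$ at any $x$ with $u_0^l(x)\neq u_0^r(x)$, which is false. So the decay of $b$ is an all-orders cancellation of local terms. Your second bullet (``deduce from these expansions'') is exactly where the argument is missing.

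The missing idea is the locality, or formal uniqueness, of the large-$\lambda$ expansion. This is what the paper's computation in Lemma~\ref{lem:ab} checks through order four: $\beta_2^l=\beta_2^r$ and $\beta_3^l-\beta_3^r=\beta_2^l(\bar\alpha_1^l-\bar\alpha_1^r)$.

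A formal solution $\bigl(I+\sum_{k\ge1}\Gamma_k(x)\lambda^{-k}\bigr)e^{-i\lambda x\sigma_3}$ of \eqref{Lax:matrix-new} is unique up to right multiplication by an $x$-independent diagonal series. By the recursion (cf.\ \eqref{eq:alpha-beta}), the off-diagonal part of $\Gamma_{k+1}$ is determined by lower-order coefficients and $u_0$, and the diagonal part of $\Gamma_k$ is determined only up to an additive constant. Hence the formal expansions of $\Phi_1^l$ and $\Phi_1^r$ are scalar multiples of each other, and their formal Wronskian vanishes identically. Push the remainder bound of Lemma~\ref{lem:inftyofJ} to order $N$, which is possible since all derivatives and moments of $u_0-u_0^s$ are integrable. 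This gives $b=\mathcal{O}(|\lambda|^{-N})$ for every $N$.

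Two smaller points. First, $a_1$ does not tend to a constant, since $a_1\sim e^{i(x_0^l-x_0^r)\lambda}$ by \eqref{asym:a1a2}. However, Proposition~\ref{pro:a12} gives $|a_1|\ge1$ and $|a_2|=1$ on $\mathbb{R}$, so $|\rho|\le|b|$ at once. Second, the decay of $\rho^{(j)}$ in your last paragraph is not part of the statement. It would also need separate control of the $\lambda$-derivatives of the remainders.
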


\subsection*{Time evolution and the inverse problem}
We now consider the inverse scattering problem of recovering the solution $u(x,t)$ from the reflection coefficients $r_1(\lambda)$, $r_2(\lambda)$, and $\rho(\lambda)$. Moreover, we show that this inverse problem can be solved by formulating an associated RH problem in terms of $r_1(\lambda)$, $r_2(\lambda)$, and $\rho(\lambda)$.

\begin{RHP}\label{RHP:X}
    Seek a $1\times 2$ vector-valued meromorphic function $X(x,t;\lambda)$ satisfying the following conditions:
    \begin{enumerate}
        \item $X(x,t;\lambda)$ satisfies the jump condition
        \begin{equation}\label{jumps:X}
            X_+(x,t;\lambda) = X_-(x,t;\lambda) V(x,t;\lambda),
        \end{equation}
        where the jump matrix $V(x,t;\lambda)$ is given by
         \begin{equation*}
       V(x,t;\lambda)=\begin{cases}
            \begin{aligned}
                &\begin{pmatrix}
                    1 & -2i r_2(\lambda) e^{-2i \theta(x,t;\lambda)} \\
                    0 & 1
                \end{pmatrix},&&\lambda\in\Sigma_1^r\setminus\Sigma_1^l,\\
                &\begin{pmatrix}
                    \frac{1-r_1(\lambda)r_2(\lambda)}{1+r_1(\lambda)r_2(\lambda)} & \frac{-2i r_2(\lambda) }{1+r_1(\lambda)r_2(\lambda)} e^{-2i \theta(x,t;\lambda)}\\
                    \frac{-2i r_1(\lambda)}{1+r_1(\lambda)r_2(\lambda)} e^{2i \theta(x,t;\lambda)} & \frac{1-r_1(\lambda)r_2(\lambda)}{1+r_1(\lambda)r_2(\lambda)}
                \end{pmatrix},&&\lambda\in\Sigma_1^r\cap\Sigma_1^l,\\
                &\begin{pmatrix}
                    1 & 0 \\
                    -2i r_1(\lambda) e^{2i \theta(x,t;\lambda)} & 1
                \end{pmatrix},&&\lambda\in\Sigma_{1}^{l}\setminus\Sigma_1^r,\\
                &{\begin{pmatrix}
                    1-|\rho(\lambda)|^2&-\rho^*(\lambda)e^{-2i\theta(x,t;\lambda)}\\
                    \rho(\lambda)e^{2i\theta(x,t;\lambda)}&1
                \end{pmatrix}},&&\lambda\in\R,\\

                &\begin{pmatrix}
                    1 & 2i r_1(\lambda) e^{-2i \theta(x,t;\lambda)} \\
                    0 & 1
                \end{pmatrix},&&\lambda\in\Sigma_{2}^{l}\setminus\Sigma_2^r,\\
                 &\begin{pmatrix}
                    \frac{1-r_1(\lambda)r_2(\lambda)}{1+r_1(\lambda)r_2(\lambda)} & \frac{2i r_1(\lambda) }{1+r_1(\lambda)r_2(\lambda)} e^{-2i \theta(x,t;\lambda)}\\
                    \frac{2i r_2(\lambda)}{1+r_1(\lambda)r_2(\lambda)} e^{2i \theta(x,t;\lambda)} & \frac{1-r_1(\lambda)r_2(\lambda)}{1+r_1(\lambda)r_2(\lambda)}
                \end{pmatrix},&&\lambda\in\Sigma_2^{r}\cap\Sigma_2^{l},\\
                 &\begin{pmatrix}
                    1 & 0 \\
                    2i r_2(\lambda) e^{2i \theta(x,t;\lambda)} & 1
                \end{pmatrix},&&\lambda\in\Sigma_{2}^{r}\setminus\Sigma_2^l,\\
            \end{aligned}
        \end{cases}
    \end{equation*}
with $\theta(x,t;\lambda)=x\lambda+t\lambda^3$.
\item As $\lambda \to \infty$, the following asymptotic expansion holds:
        \begin{equation}
            X(x,t;\lambda) = \begin{bmatrix} 1 & 1 \end{bmatrix} + \mathcal{O}(\lambda^{-1}).
        \end{equation}

\item The function $X(x,t;\lambda)$ satisfies the symmetry condition
        \begin{equation}
            X^*(x,t;\lambda) = X(x,t; -\lambda) =  X(x,t;\lambda)\sigma_1.
        \end{equation}
    \end{enumerate}
\end{RHP}

\begin{rmk}
	The RH problem \ref{RHP:X} is closely related to the soliton-gas RH problem considered in \cite{dyachenko_primitive_2016}, but it additionally involves jump conditions on the real line. 
	Indeed, based on the spatial asymptotic analysis in \cite{girotti_rigorous_2021}, in the case $r_2(\lambda)=0$ and in the absence of jump conditions on the real line, the leading-order term of the corresponding initial data for such a KdV soliton gas is asymptotic to the periodic solution $u_0^l(x)$ in \eqref{initial:ulur}, while the error term is of order $\mathcal{O}(|x|^{-1})$. This indicates that the initial data for the KdV soliton gas do not satisfy the condition~\eqref{initial}; consequently, the classical inverse scattering transform (IST) method cannot be applied in a straightforward manner to the soliton gas case.
    
	Furthermore, the relationship between periodic solutions and soliton gases is investigated in \cite{nabelek_algebro-geometric_2020} via the dressing method. Moreover, in \cite{jenkins_approximation_2025}, the authors investigate the connection between finite-gap and multisoliton solutions of the focusing NLS equation, and suggest that, in the thermodynamic limit, both can be approximated by an appropriate generalization of Zakharov's primitive potentials~\cite{dyachenko_primitive_2016}.
\end{rmk}

Based on the RH problem~\ref{RHP:X}, we obtain the following main theorem.

\begin{thm}\label{thm:inverse}
Suppose that the initial data $u_0(x)$ satisfies the condition \eqref{initial} and Assumption~\ref{assumption}. Let the reflection coefficients $r_1(\lambda)$, $r_2(\lambda)$, and $\rho(\lambda)$ be defined in terms of $u_0(x)$ by \eqref{def:pectralfunction}. Then the RH problem $X(x,t;\lambda)$ admits a unique solution for each $(x,t)\in\R\times\R_+$.

Furthermore, the solution $u(x,t)$ of the KdV equation with initial data \eqref{initial} can be reconstructed from the solution $X(x,t;\lambda)$ of the RH problem~\ref{RHP:X} via
\begin{equation}
    u(x,t) = -2i\,\frac{d}{dx} \left( \lim_{\lambda \to \infty} \lambda \bigl( X_1(x,t;\lambda) - 1 \bigr) \right),
\end{equation}
where $X_1(x,t;\lambda)$ denotes the first component of the vector $X(x,t;\lambda)$.
\end{thm}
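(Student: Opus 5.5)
The plan is to build the solution of RH problem~\ref{RHP:X} explicitly from the Jost solutions, and to read off the reconstruction formula from its large-$\lambda$ expansion. Uniqueness is proved separately by a vanishing-lemma argument.

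\textbf{Step 1: time evolution.} Let $u(x,t)$ be the classical solution from~\cite{egorova_cauchy_2011}. The backgrounds evolve as travelling waves, $u_0^s(x)\mapsto u^s(x,t)$. The Jost solutions $\Psi^s(x,t;\lambda)$ are normalized at $\infty_s$ by the background solutions $\Psi_0^s(x,t;\lambda)$, which evolve under the $t$-part of~\eqref{Lax:matrix}. Because of the zero-curvature condition~\eqref{zero curvature equation}, $\Psi^s$ again solves both equations of the Lax pair. Wronskians of simultaneous solutions are independent of $x$ and $t$, so $a$ and $b$ evolve only through the background normalizations. The net effect on the reflection coefficients in~\eqref{def:pectralfunction} is a factor $e^{\pm 2i(\theta(x,t;\lambda)-x\lambda)}$, with $\theta=x\lambda+t\lambda^3$. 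This accounts for exactly the exponentials in $V$. The quasi-momentum of the one-gap background must be absorbed into the normalization through $h(\lambda)$ and $a_{1,2}$ (Proposition~\ref{pro:a12}), so that only $\theta$ remains.

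\textbf{Step 2: construction of the solution.} Set
\[
X(x,t;\lambda)=\Bigl(\tfrac{\Phi^l_1 e^{i\theta}}{a_1(\lambda)}\cdot(\text{background factor}),\ \Phi^r_2 e^{-i\theta}\cdot(\text{background factor})\Bigr),\qquad \lambda\in\mathbb{C}_+,
\]
with the row-vector convention implicit in $[1\;1]$, and extend it to $\mathbb{C}_-$ by $X(-\lambda)=X(\lambda)\sigma_1$. The key points are as follows.

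\begin{itemize}
\item Analyticity in $\C_+\setminus(\Sigma_1^l\cup\Sigma_1^r)$ follows from Proposition~\ref{prop:Phi} together with Assumption~\ref{assumption}, which excludes poles.
\item The normalization $[1\;1]+\mathcal O(\lambda^{-1})$ comes from the WKB-type expansion of $\Phi^s$ and from $a\to1$.
\item The jumps are checked piece by piece. On $\R$, the scattering relation $\Phi^r_1=a\,\Phi^l_2+b\,\Phi^l_1$ (suitably normalized) gives the standard $1-|\rho|^2$ triangular-factor jump.
\item On $\Sigma_1^l\setminus\Sigma_1^r$, only $\Phi^l_1$ jumps. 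Its two boundary values are related through the background Bloch solutions, and this yields a lower-triangular jump with entry $r_1$. Lemma~\ref{lem:r1r2} supplies the simplified form $r_1=e^{-2i\lambda x_0^r}/(2a_1(\lambda_-)a_1(\lambda_+))$ used here. The case $\Sigma_1^r\setminus\Sigma_1^l$ is analogous.
\item On the overlap $\Sigma_1^l\cap\Sigma_1^r$, both columns jump. Eliminating $b_1$ via~\eqref{def:pectralfunction} should produce the full matrix with denominator $1+r_1r_2$.
\end{itemize}

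The symmetry $X^*(\lambda)=X(-\lambda)$ follows because $u$ is real.

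\textbf{Step 3: uniqueness.} Every jump matrix has determinant $1$; on the overlap this holds because $(1-r_1r_2)^2+4r_1r_2=(1+r_1r_2)^2$. For a vector problem one argues as follows. Take two solutions and form the $2\times2$ matrix with these rows. This matrix is analytic off the contour with no jump in its determinant, and the square-root endpoint behaviour from Lemma~\ref{lem:r1r2} makes all singularities removable. Liouville's theorem combined with the symmetry condition then forces the difference to vanish. The symmetry condition is essential here, since without it the vector problem is not unique.

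Existence also follows independently from Step 2, and at the same time gives solvability for every $(x,t)$.

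\textbf{Step 4: reconstruction.} The first column satisfies $\Phi_1^l e^{i\theta}=\exp\bigl(\tfrac{1}{2i\lambda}\int^x(u-\cdots)\bigr)(1+\mathcal O(\lambda^{-2}))$. Hence $\lambda(X_1-1)\to\frac{1}{2i}\int^x u+\text{const}$, and differentiating in $x$ gives $u=-2i\,\partial_x(\cdots)$.

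\textbf{Main obstacle.} I expect the hardest step to be the jump on the overlap $\Sigma_1^r\cap\Sigma_1^l$. There one must relate the boundary values of both $\Phi^l$ and $\Phi^r$ across their cuts, together with the branches of $a_{1,2}=(a h^{\mp1})^{1/2}$, and verify that all background phase factors cancel into the single exponential $e^{\pm2i\theta}$. I would handle this by writing each $\Phi^s_\pm$ in the basis of the other side's Bloch solutions and using $\det\Phi^s=2i\lambda$ to eliminate $b_1^*$.
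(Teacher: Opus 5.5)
Your Step~2 gets existence by a different route from the paper, which never builds a particular solution. The paper shows that the homogeneous problem $N=\mathcal O(\lambda^{-1})$ has only the trivial solution (Lemma~\ref{lem:vanishing}), invokes Zhou's theorem for unique solvability, and then reads off $u$ from Lemma~\ref{lem:asymptotic}. Building $X$ from the time-evolved Jost solutions of the Egorova--Teschl solution is fine in principle. But everything then hinges on uniqueness, and your Step~3 does not deliver it.

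\textbf{The gap in Step~3.} For two solutions $X,\tilde X$, the determinant of the matrix with rows $X,\tilde X$ tends to $\det\begin{pmatrix}1&1\\1&1\end{pmatrix}=0$. So Liouville only gives $\det\equiv0$, i.e.\ $\tilde X=c\,X$ for a scalar $c$ with the following properties:
\begin{itemize}
\item $c$ has no jump, satisfies $c(-\lambda)=c(\lambda)=c^*(\lambda)$, and $c\to1$ at infinity;
\item $c$ may still have poles at common zeros of $X_1,X_2$.
\end{itemize}
The symmetry does not remove these poles: $c=1+k/(\lambda^2-\lambda_0^2)$ with $k\in\R$, $\lambda_0\in i\R$, is admissible whenever $X(\pm\lambda_0)=0$.

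\textbf{What is missing.} You need to show that the constructed $X$ has no common zeros. In $\C_+\setminus(\Sigma_1^l\cup\Sigma_1^r)$ this is where Assumption~\ref{assumption} enters. Up to the nonvanishing factors $e^{i\theta}/a_1$ and $e^{-i\theta}/a_2$, the components are the first entries of $\Psi_1^l$ and $\Psi_2^r$, i.e.\ scalar solutions of $-\varphi''+u\varphi=\lambda^2\varphi$. If both vanish at the same $x$, they are proportional, so their Wronskian $2i\lambda\,a(\lambda)$ vanishes, which the assumption excludes. Boundary values on the contour, the point $\lambda=0$, and the endpoints still need separate arguments.

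\textbf{The admissible class.} The regularity of $r_1,r_2$ in Lemma~\ref{lem:r1r2} says nothing about an arbitrary solution. RH problem~\ref{RHP:X} has to be posed in a class, e.g.\ $X=\mathcal O(|\lambda\mp i\eta_j^s|^{-1/4})$ and bounded at $0$. Otherwise $c=1+k/(\lambda^2+(\eta_2^l)^2)$ already yields a second solution, with poles at $\pm i\eta_2^l$.

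\textbf{Smaller point in Step~4.} The sign is wrong. From \eqref{Lax:matrix-new}, the $(1,1)$ entry gives $\frac{i}{2\lambda}\int^x u$, not $\frac{1}{2i\lambda}\int^x u$; with your coefficient, $-2i\,\partial_x$ returns $-u$. The background terms hidden in your ``$\cdots$'' are $m^l_{1,1}-i(x-x_0^l)p^l_1$, and they produce $u^l$ via Corollary~\ref{cor:u}.

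\textbf{Smaller point in Step~1.} The functions $h$ and $a_{1,2}$ have nothing to do with the time evolution. Normalize the time-$t$ Jost solutions by $\Psi_0^s(x,t;\lambda)$, including the factor $e^{-itq^s\sigma_3}$. Then $a,b,b_1$ are $t$-independent Wronskians. The factor $e^{\pm2i\theta}$ enters only through $e^{i\theta\sigma_3}$ in $M$, via $q^s(\lambda)=4\lambda^3+\mathcal O(\lambda^{-1})$. Consequently the cubic term in $\theta$ should be $4t\lambda^3$, as in Section~\ref{section:inverse}.
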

\begin{proof}
    See Section \ref{section:inverse}.
\end{proof}

\begin{rmk}
Although we have established the existence and uniqueness of the solution $u(x,t)$ for $t>0$ via the associated RH problem, we do not further investigate in this paper the precise functional space in which the solution is defined. Nevertheless, by the results of Egorova \textit{et al.}~\cite{egorova_cauchy_2009,egorova_cauchy_2011}, it is known that if the initial data satisfies \eqref{initial}, then the corresponding solution $u(x,t)$ remains a unique classical solution $u(x,t)\in C^{n_0-m_0-2}(\R)\times C^1(\R)$ and satisfying
$$
\left|\int_{0}^{\infty_s}\bigl|\partial_x^n\bigl(u(x,t)-u^s(x,t)\bigr)\bigr|\,(1+|x|^{\lfloor m_0/2\rfloor-2})\,dx\right|<\infty,\quad n\le n_0-m_0-2,
$$
and
$$
\left|\int_{0}^{\infty_s}\bigl|\partial_t\bigl(u(x,t)-u^{s}(x,t)\bigr)\bigr|\,(1+|x|^{\lfloor m_0/2\rfloor-2})\,dx\right|<\infty
$$
where $u^{s}(x,t),~s\in\{r,l\}$ denote the associated finite-gap background solutions.
\end{rmk}
\subsection*{Outline of the paper}
In Section \ref{section:directscattering}, we recall the RH problem associated with the periodic finite-gap potential. Based on this formulation, we construct the Jost solutions for the Lax pair \eqref{Lax:matrix} under the assumption that the initial data satisfy \eqref{initial}. Moreover, we derive the corresponding scattering data and study their properties in Lemma \ref{lem:ab}.  
In Section \ref{section:RHP}, using the scattering data, we introduce an auxiliary function, which enables us to define modified reflection coefficients and to formulate the RH problem associated with the initial data $u_0(x)$. 
Finally, in Section \ref{section:inverse}, incorporating the time evolution of the scattering data, we establish the existence and uniqueness of the solution to the RH problem \ref{RHP:X}, which in turn yields Theorem \ref{thm:inverse}. 
In Appendix \ref{appendix:periodic solution}, we review the periodic solutions, and in Appendix \ref{appendix:Bloch spectrum}, we study the Lax spectrum of the periodic KdV solution.

\section{Direct scattering}\label{section:directscattering}
\subsection*{RH problem for the KdV periodic potential}
Before addressing the direct scattering problem for the initial data~(\ref{initial}), we recall the (quasi-)periodic finite-gap RH problem for the genus-one case.

Consider the following periodic finite-gap potential for the KdV equation~\eqref{eq:KdV}:
\begin{equation}\label{u0}
u(x,t) = \eta_2^2 - \eta_1^2 
- 2\eta_2^2\, 
\mathrm{dn}^2\!\left(
  \eta_2\,[\,x - 2(\eta_1^2 + \eta_2^2)t\,- x_0]  + K(m)\, \middle|\, m
\right).
\end{equation}
where $0 < \eta_1 < \eta_2,m=\frac{\eta_1}{\eta_2}$ and $x_0 \in \mathbb{R}$ is the phase shift. The Lax spectrum of the Lax pair~\eqref{Lax:operator} associated with the genus-one finite-gap potential~\eqref{u0} is then given by
\(
[i\eta_1,\, i\eta_2] \cup [-i\eta_2,\, -i\eta_1].
\)
We then introduce the corresponding two-sheeted genus-one Riemann surface
\begin{equation}\label{surface:genus1}
 \mathscr{X}(\eta_{1},\eta_{2}) 
  := \bigl\{(y;\lambda) \in \mathbb{C}^2 \,\big|\, 
  y^2 = (\lambda^2 + \eta_1^2)(\lambda^2 + \eta_2^2) 
  \bigr\},
\end{equation}
equipped with its canonical homology basis $\mathscr{A},\mathscr{B}$, as illustrated in Figure.~\ref{fig:genus1}.
\begin{figure}[H]
    \centering
    \begin{overpic}[width=0.5\linewidth, angle=90]{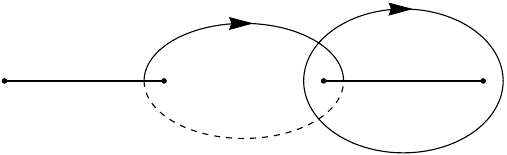}
        \put(-2,40){\large $\mathscr{A}$}  
        \put(-5,72){\large $\mathscr{B}$}  
        \put(9,95){\footnotesize $i\eta_2$}
        \put(9,64){\footnotesize $i\eta_1$}
        \put(6.5,33){\footnotesize $-i\eta_1$}
        \put(6.5,0){\footnotesize $-i\eta_2$}
    \end{overpic}
    \caption{The Riemann surface $\mathscr{X}(\eta_1,\eta_2)$ and its canonical homology basis \(\mathscr{A}, \mathscr{B}\).}
    \label{fig:genus1}
\end{figure}
Denote by $dp$ and $dq$ the quasi-momentum and quasi-energy differentials associated with the Riemann surface $\mathscr{X}(\eta_1,\eta_2)$.  
These can also be regarded as normalized Abelian differentials of the second kind.  
More precisely, introduce the function
\[
R(\lambda;\eta_1,\eta_2) = \sqrt{(\lambda^2+\eta_1^2)(\lambda^2+\eta_2^2)},
\]
and define the differentials $dp$ and $dq$ by
\begin{equation}\label{differential:dpdq}
    dp(\eta_1,\eta_2) = \frac{\zeta^2 + c_{1}}{R(\zeta;\eta_1,\eta_2)}\, d\zeta, 
    \qquad 
    dq(\eta_1,\eta_2) = 12\,\frac{\zeta^4 + \tfrac{\eta_1^2+\eta_2^2}{2}\,\zeta^2 + c_{2}}
    {R(\zeta;\eta_1,\eta_2)}\, d\zeta,
\end{equation}
where the constants $c_{1}$ and $c_{2}$ are determined by the normalization conditions, namely,
$$
    c_{1} = \eta_2^2 - \eta_2^2 \frac{E\!\left(m\right)}{K\!\left(m\right)},
    \qquad 
    c_{2} = \frac{\eta_1^2\eta_2^2}{3} - \frac{\eta_1^2 + \eta_2^2}{6}\,c_{1}(\eta_1,\eta_2).
$$
Here, $m=\frac{\eta_1}{\eta_2},\ K(m)= \int_{0}^{\pi/2} \frac{d\theta}{\sqrt{1 - m^2 \sin^2\theta}}$ and $E(m)=\int_{0}^{\pi/2} \sqrt{1 - m^2 \sin^2\theta}\, d\theta$ denote the complete elliptic integrals of the first and second kind, respectively.

Moreover, define the Abelian integrals $p(\lambda)$ and $q(\lambda)$ associated with 
the differentials \eqref{differential:dpdq} by
\begin{equation}\label{integral:pq}
\begin{aligned}
     p(\lambda;\eta_1,\eta_2) &= \int_{i\eta_2}^{\lambda} dp 
     \ = \lambda + \mathcal{O}(\lambda^{-1}), \qquad &\lambda \to \infty, \\   
     q(\lambda;\eta_1,\eta_2) &= \int_{i\eta_2}^{\lambda} dq 
     \ = 4\lambda^3 + \mathcal{O}(\lambda^{-1}), \qquad &\lambda \to \infty.
\end{aligned}
\end{equation}
In addition, fundamental frequencies $\Omega_1$ and $\Omega_2$ are given by the $\mathscr{B}$-periods of $dp$ and $dq$, respectively:
\begin{equation}\label{Omega:genus1}
    \Omega_1(\eta_1,\eta_2) = \oint_{\mathscr{B}} dp 
    = \frac{\pi \eta_2}{K\!\left( m \right)},
    \qquad 
    \Omega_2(\eta_1,\eta_2) = \oint_\mathscr{B} dq 
    = -\frac{2\pi \eta_2(\eta_1^2+\eta_2^2)}{K\!\left( m \right)}.
\end{equation}

Define the following RH problem associated with the finite gap solution $u(x,t)$ in (\ref{u0}), as in \cite{trogdon_riemannhilbert_2013}. 
Furthermore, in Lemma~\ref{lem:Lax-verify}, we will show that the solution of 
RH problem~\ref{RHP:g1-vector} coincides with the Baker–Akhiezer function, which satisfies the Lax pair \eqref{Lax:matrix}.
\begin{RHP}\label{RHP:g1-vector}
Let 
\(
\Omega := -x\,\Omega_1(\eta_1,\eta_2) - t\,\Omega_2(\eta_1,\eta_2),
\)
where $\Omega_1,\Omega_2$ are the real-valued fundamental frequencies defined in 
\eqref{Omega:genus1}, and let $\Delta=x_0 \Omega_1(\eta_1,\eta_2)\in\R$ denote a constant phase shift. Find a $1\times2$ vector-valued $m(\lambda)$, satisfying the following properties
		\begin{equation}\label{jump:g1-vector}
			\begin{aligned}
				&m(\lambda)\text{ is analytic for } \C\setminus{[-i\eta_2,i\eta_2]},\\
				&m_+(\lambda)=m_-(\lambda)\begin{cases}
					\begin{aligned}
						&\begin{pmatrix}
							0&-i\\
							-i&0
						\end{pmatrix},&&\lambda\in(i\eta_1,i\eta_2),\\
						&\begin{pmatrix}
							0&i\\
							i&0
						\end{pmatrix},&&\lambda\in(-i\eta_2,-i\eta_1),\\
						&e^{i(\Omega+\Delta)\sigma_3},&&\lambda\in(-i\eta_1,i\eta_1),\\
					\end{aligned}
				\end{cases}\\
				&m(\lambda)=\begin{bmatrix}
				    1&1
				\end{bmatrix}+\frac{1}{\lambda}\begin{bmatrix}
				     m_{1,1}(x,t)&\ m_{2,1}(x,t)
				\end{bmatrix}+\mathcal{O}(\lambda^{-2}),\text{ as } \lambda\to\infty,\\
				&m(-\lambda)=m(\lambda)\begin{pmatrix}
					0&1\\
					1&0
				\end{pmatrix},\\
			\end{aligned}
		\end{equation}
	\end{RHP}
Introduce the normalized holomorphic differential $\omega(\eta_1,\eta_2)$ associated with the Riemann surface $\mathscr{X}(\eta_1,\eta_2)$ in \eqref{surface:genus1},
$$
\begin{aligned}
&\omega(\eta_1,\eta_2) := \left( 2\int_{-i\eta_1}^{i\eta_1} \frac{d\zeta}{R(\zeta;\eta_1,\eta_2)} \right)^{-1}
\frac{d\zeta}{R(\zeta;\eta_1,\eta_2)}
= -\frac{i\eta_2}{4K\left(m\right)}\frac{d\zeta}{R(\zeta;\eta_1,\eta_2)},\\
\end{aligned}
$$
and
$$
\oint_{\mathscr{A}} \omega(\eta_1,\eta_2) = 1,
\qquad
\oint_{\mathscr{B}} \omega(\eta_1,\eta_2) = \frac{iK\left(1-m\right)}{2K\left(m\right)}
:= \tau,
$$
where $\Im \tau > 0$, so that $\tau$ is the (genus-1) period matrix element. The corresponding Abelian integral is defined by
\begin{equation}\label{Abelian:J}
J(\lambda;\eta_1,\eta_2) = \int_{i\eta_2}^{\lambda} \omega(\eta_1,\eta_2),
\end{equation}
with the following propositions:
 $$
	\begin{aligned}
		&J_+(\lambda;\eta_1,\eta_2)+J_-(\lambda;\eta_1,\eta_2)=0,
		&&\lambda\in[i\eta_1,i\eta_2],\\
		&J_+(\lambda;\eta_1,\eta_2)-J_-(\lambda;\eta_1,\eta_2)=-\tau, 
		&&\lambda\in[-i\eta_1,i\eta_1],\\
		&J_+(\lambda;\eta_1,\eta_2)+J_-(\lambda;\eta_1,\eta_2)=-1,
		&&\lambda\in[-i\eta_2,-i\eta_1],
	\end{aligned}
	$$
	and
	$$
	\begin{aligned}
		&J_+(i\eta_1;\eta_1,\eta_2)=-\frac{\tau}{2},
		&&J_+(-i\eta_1;\eta_1,\eta_2)=-\frac{\tau}{2}-\frac{1}{2},\\
		&J_+(-i\eta_2;\eta_1,\eta_2)=-\frac{1}{2}, 
		&&J_+(\infty;\eta_1,\eta_2)=-\frac{1}{4}.
	\end{aligned}
	$$
Introduce the Jacobi theta function as
	$$
	\vartheta_3(\lambda;\tau):=\sum_{n\in\mathbb Z}e^{2\pi i n\lambda+\pi i n^2\tau},\ \lambda\in \C,
	$$
and the solution of the RH problem \ref{RHP:g1-vector} is given by
\begin{equation}\label{solution:m}
    m(\lambda)=\frac{\gamma(\lambda;\eta_1,\eta_2)\vartheta_3(0;2\tau)}{\vartheta_3\left(\frac{\Omega+\Delta}{2\pi};2\tau\right)}
			\begin{bmatrix}
				\frac{\vartheta_3\left(2J(\lambda)+\frac{\Omega+\Delta}{2\pi}-\frac{1}{2};2\tau\right)}{\vartheta_3(2J(\lambda)-\frac{1}{2};2\tau)}&
				\frac{\vartheta_3\left(-2J(\lambda)+\frac{\Omega+\Delta}{2\pi}-\frac{1}{2};2\tau\right)}{\vartheta_3(-2J(\lambda)-\frac{1}{2};2\tau)}
			\end{bmatrix},
\end{equation}
where $$\Omega := -x\,\Omega_1(\eta_1,\eta_2) - t\,\Omega_2(\eta_1,\eta_2),\ \gamma(\lambda; \eta_1, \eta_2) := \left( \frac{\lambda^2 + \eta_1^2}{\lambda^2 + \eta_2^2} \right)^{1/4}.$$ 
Moreover, $\gamma(\lambda; \eta_1, \eta_2)$ obeys the following jump conditions:
$$
\begin{aligned}
    &\gamma_+(\lambda; \eta_1, \eta_2) = -i\, \gamma_-(\lambda; \eta_1, \eta_2), \quad \lambda \in (i\eta_1, i\eta_2),\\
    &\gamma_+(\lambda; \eta_1, \eta_2) = i\, \gamma_-(\lambda; \eta_1, \eta_2), \quad \lambda \in (-i\eta_2, -i\eta_1).
\end{aligned}
$$

In the following, we also need to construct a $2\times2$ matrix-valued RH problem. 
\begin{RHP}\label{RHP:genus1}
Let 
\(
\Omega := -x\,\Omega_1(\eta_1,\eta_2) - t\,\Omega_2(\eta_1,\eta_2),
\)
where $\Omega_1,\Omega_2$ are the real-valued fundamental frequencies defined in 
\eqref{Omega:genus1}, and let $\Delta=x_0 \Omega_1(\eta_1,\eta_2)\in\R$ denote a constant phase shift. 
For $\eta_2>\eta_1>0$, seek a $2\times 2$ matrix-valued function $O(x,t;\lambda)$ such  that \cite{girotti_rigorous_2021}:
\begin{enumerate}
    \item $O(x,t;\lambda)$ is analytic for $\lambda\in\C\setminus[-i\eta_2,i\eta_2]$, 
    with at most a simple pole at $\lambda=0$.
    
    \item The boundary values of $O(x,t;\lambda)$ on $[-i\eta_2,i\eta_2]$ satisfy the jump condition
    \[
    O_+(x,t;\lambda) = O_-(x,t;\lambda)\,V^{(O)}(\lambda), \qquad \lambda \in [-i\eta_2,i\eta_2],
    \]
    where
    \[
    V^{(O)}(\lambda)=
    \begin{cases}
        \begin{pmatrix}
            0 & -i \\
            -i & 0
        \end{pmatrix}, & \lambda \in (i\eta_1,i\eta_2), \\[1ex]
        \begin{pmatrix}
            0 & i \\
            i & 0
        \end{pmatrix}, & \lambda \in (-i\eta_2,-i\eta_1), \\[1ex]
        e^{i(\Omega+\Delta)\sigma_3}, & \lambda \in (-i\eta_1,i\eta_1).
    \end{cases}
    \]
    
    \item As $\lambda\to\infty$, $O(x,t;\lambda)$ satisfies the normalization
    \[
    O(x,t;\lambda) = I + \mathcal{O}\!\left(\frac{1}{\lambda}\right).
    \]
    
    \item $O(x,t;\lambda)$ obeys the symmetry relation
    \[ \overline{O(\bar{\lambda};x,t)}=
    O(-\lambda;x,t) = \begin{pmatrix}
        0 & 1 \\
        1 & 0
    \end{pmatrix}O(x,t;\lambda)
    \begin{pmatrix}
        0 & 1 \\
        1 & 0
    \end{pmatrix}.
    \]
\end{enumerate}
\end{RHP}
By the procedure in \cite{girotti_rigorous_2021}, we construct the solution 
$O(x,t;\lambda)$ to the $2\times2$ matrix-valued RH problem~\ref{RHP:genus1}.
Since $x_0 \Omega_1(\eta_1,\eta_2)=\Delta$ and then let
\begin{equation}\label{def:phixt}
    \varphi(x,t;\lambda) := m(\lambda)\, e^{-i((x-x_0)\,p(\lambda) + t\,q(\lambda))}.
\end{equation}
Then the jump conditions for $\varphi$ are independent of $x$ and $t$, 
so that $\partial_x \varphi$ satisfies the same jump conditions as $\varphi$. 
We define 
\[
\begin{aligned}
&\Psi(x,t;\lambda) := 
\begin{pmatrix}
\varphi(x,t;\lambda) \\[1ex]
\partial_x \varphi(x,t;\lambda)
\end{pmatrix}\\
&=
\begin{pmatrix}
m_1(\lambda) & m_2(\lambda) \\[1ex]
\partial_x m_1(\lambda) - i\,p(\lambda)\, m_1(\lambda) & 
\partial_x m_2(\lambda) + i\,p(\lambda)\, m_2(\lambda)
\end{pmatrix}
e^{-i ((x-x_0) p(\lambda)+t q(\lambda)) \sigma_3}.
\end{aligned}
\]
By normalizing $\Psi(x,t;\lambda) e^{i ((x-x_0) p(\lambda)+t q(\lambda)) \sigma_3}$ as $\lambda \to \infty$  to the identity matrix  and factoring out the exponential term, we obtain
\begin{equation}\label{O:genus1}
    O(x,t;\lambda)=\tfrac{1}{2}
    \begin{pmatrix}
        \bigl(1+\tfrac{p(\lambda)}{\lambda}\bigr) m_1(\lambda)
        - \tfrac{1}{i\lambda}\,\partial_x m_1(\lambda) &
        \bigl(1-\tfrac{p(\lambda)}{\lambda}\bigr) m_2(\lambda)
        - \tfrac{1}{i\lambda}\,\partial_x m_2(\lambda) \\[1ex]
        \bigl(1-\tfrac{p(\lambda)}{\lambda}\bigr) m_1(\lambda)
        + \tfrac{1}{i\lambda}\,\partial_x m_1(\lambda) &
        \bigl(1+\tfrac{p(\lambda)}{\lambda}\bigr) m_2(\lambda)
        + \tfrac{1}{i\lambda}\,\partial_x m_2(\lambda)
    \end{pmatrix},
\end{equation}
where $m_1(\lambda)$ and $m_2(\lambda)$ denote the first and second 
components of $m(\lambda)$, respectively. 
In particular, although $O(x,t;\lambda)$ has a simple singularity at $\lambda=0$, 
its determinant is identically equal to $1$, by Lemma~3.3 in \cite{girotti_rigorous_2021}.
\begin{lem}\label{lem:Lax-verify}
The matrix-valued function 
\begin{equation}\label{Psi}
\Psi(x,t;\lambda)
= 
\begin{pmatrix}
1 & 1 \\[0.5ex]
-i\lambda & i\lambda
\end{pmatrix}
O(x,t;\lambda)\, e^{-i[(x-x_0)p(\lambda)+tq(\lambda)]\sigma_3}
\end{equation}
with the matrix 
    $O(x,t;\lambda)$ as in \eqref{O:genus1} and $x_0=\Delta/\Omega_1(\eta_1,\eta_2)$,  with $\Delta=x_0 \Omega_1(\eta_1,\eta_2)$ and $\Omega_1$ defined in \eqref{Omega:genus1},  satisfies the matrix-form Lax pair~\eqref{Lax:matrix}.
\end{lem}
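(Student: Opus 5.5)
The plan is the standard dressing argument for Baker--Akhiezer functions. We show that $\Psi_x\Psi^{-1}$ and $\Psi_t\Psi^{-1}$ have no jumps, are entire polynomials in $\lambda$, and are then identified from their expansions at $\lambda=\infty$.

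\textbf{Step 1 (no jumps, controlled singularities).} Write $\Psi = G(\lambda)\,O\,e^{-i\phi\sigma_3}$ with $G=\begin{pmatrix}1&1\\-i\lambda&i\lambda\end{pmatrix}$ and $\phi=(x-x_0)p+tq$. The first row of $\Psi$ is $m_1e^{-i\phi}$, $m_2e^{+i\phi}$, i.e.\ $\varphi$ from \eqref{def:phixt}, and the second row is $\partial_x\varphi$ by construction of \eqref{O:genus1}. Hence $\Psi$ has the form $\begin{pmatrix}\varphi\\ \varphi_x\end{pmatrix}$.

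We first check that $\Psi$ has constant jumps on $[-i\eta_2,i\eta_2]$.
\begin{itemize}
\item On $(i\eta_1,i\eta_2)$ we have $p_++p_-=0$ and $q_++q_-=0$, since $J$, $p$, $q$ are normalized at $i\eta_2$. Combined with the jump of $O$, the exponential conjugation leaves the off-diagonal jump $\begin{pmatrix}0&-i\\-i&0\end{pmatrix}$ unchanged.
\item On $(-i\eta_1,i\eta_1)$, $p_+-p_-=-\Omega_1$ and $q_+-q_-=-\Omega_2$ (the $\mathscr{B}$-periods, up to sign convention). These cancel the factor $e^{i(\Omega+\Delta)\sigma_3}$ exactly because $\Delta=x_0\Omega_1$.
\end{itemize}
So $\Psi_+=\Psi_-V_0$ with $V_0$ independent of $x,t$. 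Consequently $\Psi_x\Psi^{-1}$ and $\Psi_t\Psi^{-1}$ are single-valued off the endpoints. We have $\det\Psi=\det G\cdot\det O=2i\lambda$, using $\det O\equiv1$ from Lemma~3.3 of \cite{girotti_rigorous_2021}.

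\textbf{Step 2 (removable singularities).} At the branch points $\pm i\eta_j$, $O$ has at most quarter-root singularities inherited from $\gamma$. Since $\det\Psi$ is bounded there, the products $\Psi_x\Psi^{-1}$ and $\Psi_t\Psi^{-1}$ are $O(|\lambda\mp i\eta_j|^{-1/2})$, hence removable.

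At $\lambda=0$ the simple pole of $O$ is compensated by the factor $\lambda$ in the second row of $G$; equivalently, the entries of $\Psi=\begin{pmatrix}\varphi\\\varphi_x\end{pmatrix}$ are built from $m$, $\partial_x m$, $p$, which are regular at $0$. The division by $\det\Psi=2i\lambda$ is the delicate point. It is cleaner to avoid it by working with the first row: $\varphi_x$ is the second row of $\Psi$, so $\Psi_x=\begin{pmatrix}\varphi_x\\ \varphi_{xx}\end{pmatrix}$, and it suffices to show $\varphi_{xx}=(u-\lambda^2)\varphi$ componentwise.

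\textbf{Step 3 (identify the scalar equation).} For each row-vector component, consider $F=(\varphi_{xx}+\lambda^2\varphi)/\varphi$, with $\varphi$ the vector from \eqref{def:phixt}. The vector $\varphi_{xx}+\lambda^2\varphi$ has the same constant jumps as $\varphi$. The scalar ratio (using that both components share the jump structure) is therefore single-valued. Using the symmetry $m(-\lambda)=m(\lambda)\sigma_1$ and the explicit theta formula \eqref{solution:m}, the common zeros of $\varphi$'s components can be controlled. By Liouville, expanding at infinity gives
\begin{itemize}
\item $m=[1,1]+m^{(1)}/\lambda+\dots$,
\item $p=\lambda+O(\lambda^{-1})$,
\item $F=u(x,t)$ independent of $\lambda$, with $u=2i\,\partial_x m_{1,1}$ plus the constant coming from the $O(\lambda^{-1})$ term of $p$.
\end{itemize}
That this $u$ equals \eqref{u0} follows from the theta-function formula and Appendix~\ref{appendix:periodic solution}; alternatively it is not needed, since the lemma only asserts that $\Psi$ solves \eqref{Lax:matrix} with $u$ the finite-gap potential recovered from $m$.

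The $t$-part is handled the same way: $\Psi_t\Psi^{-1}$ is entire. Its growth at infinity is $O(\lambda^3)$ from $q=4\lambda^3+O(\lambda^{-1})$. Matching the polynomial part with $A$ (equivalently, showing $\varphi_t=-u_x\varphi+(4\lambda^2+2u)\varphi_x$ through the scalar Liouville argument with $O(\lambda^{-1})$ remainders) fixes all coefficients in terms of $u$ and its derivatives.

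\textbf{Main obstacle.} The main obstacle is the point $\lambda=0$ together with possible zeros of $\varphi$ (the divisor of the Baker--Akhiezer function). First I would exploit $\det O\equiv1$ to write $\Psi^{-1}=(2i\lambda)^{-1}\operatorname{adj}\Psi$. I would then show that $\Psi_x\operatorname{adj}\Psi$ vanishes at $\lambda=0$ by the symmetry $\Psi(-\lambda)=\Psi(\lambda)\sigma_1$: at $\lambda=0$ the two columns of $\Psi$ coincide, so $\operatorname{adj}\Psi$ has rank one and annihilates the required combination. This makes $\Psi_x\Psi^{-1}$ regular at $0$ and allows Liouville to be applied directly to the matrix, avoiding the scalar zero issue.
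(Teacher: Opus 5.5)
Your proof, in the matrix form you settle on in your last paragraph, is correct. It has the same skeleton as the paper's: constant jumps make $\Psi_x\Psi^{-1}$ single-valued, the isolated singularities are removable, Liouville applies, and the polynomial part is read off at $\lambda=\infty$.

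\textbf{Where your route differs.} The genuinely different step is at $\lambda=0$. The paper expands $\partial_x\Psi\,\Psi^{-1}=\lambda^{-1}(\cdots)+\mathcal{O}(1)$. It then kills the residue by direct computation, using $m_{1,\pm}(0)=m_{2,\pm}(0)e^{\pm i(\Omega+\Delta)}$, $p_\pm(0)=\mp\Omega_1/2$ and their $x$-derivatives.

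You argue structurally instead:
\begin{enumerate}
\item The gap jump of $\Psi$ cancels exactly (this is where $\Delta=x_0\Omega_1$ enters), and the entries of $\Psi$ are bounded near $0$, so $\Psi$ is analytic at $0$.
\item The symmetry $\Psi(-\lambda)=\Psi(\lambda)\sigma_1$ then forces $\Psi(0)$ to have equal columns. So does $\Psi_x(0)$, by differentiating the symmetry in $x$; you should state this explicitly.
\item Since $\operatorname{adj}(\Psi\sigma_1)=-\sigma_1\operatorname{adj}\Psi$, this gives $\Psi_x(0)\operatorname{adj}\Psi(0)=0$. Hence $\Psi_x\Psi^{-1}=\Psi_x\operatorname{adj}\Psi/(2i\lambda)$ is regular at $0$.
\end{enumerate}
The paper's relations are exactly ``equal columns at $0$'' written in boundary values, so both arguments prove the same fact. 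Yours avoids the explicit formulas for $\alpha,\beta$.

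You also treat the branch points: quarter-root bounds plus $|\det\Psi|=2|\lambda|$ bounded away from zero give $\mathcal{O}(|\lambda\mp i\eta_j|^{-1/2})$, hence removable. The paper passes over this. You also address the $t$-part, which the paper's proof omits even though the statement asserts the full Lax pair.

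\textbf{Repairs.}
\begin{itemize}
\item Drop the scalar ratio $F=(\varphi_{xx}+\lambda^2\varphi)/\varphi$ from Step~3. Componentwise it is not single-valued, because the two components of $\varphi$ are interchanged across the bands. A priori it may also have poles at the zeros of the components of $\varphi$, which do occur, and ``can be controlled'' is not an argument.
\item The matrix route makes $F$ unnecessary. Because the second row of $\Psi$ is $\varphi_x$, the first row of $\Psi_x\Psi^{-1}$ is identically $(0,1)$. Also $\operatorname{tr}(\Psi_x\Psi^{-1})=\partial_x\log\det\Psi=0$. So only the $(2,1)$ entry has to be read off at infinity, and it gives $u=-2p_1-2i\,\partial_x m_{1,1}$; your sign on the $m_{1,1}$ term is wrong.
\item For the $t$-part, conjugation by the matrix $G$ of your Step~1 makes the $(2,1)$ entry of $\Psi_t\Psi^{-1}$ grow like $-4\lambda^4$, not $\mathcal{O}(\lambda^3)$. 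This is consistent with $A$. Identifying the lower coefficients needs one more order of the expansion at infinity, which is routine. For example, showing that the constant term of the $(1,2)$ entry is exactly $2u$ uses that $q$ has no $\lambda^1$ term.
\item At $\pm i\eta_1$ the $-\tfrac14$ singularity of $m$ comes from the theta denominators vanishing there, not from $\gamma$ (which itself vanishes). The bound you use is nevertheless correct.
\end{itemize}
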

\begin{proof}
Recalling the definitions in \eqref{integral:pq} and \eqref{Omega:genus1}, we find that $\Psi(x,t;\lambda)$ satisfies the jump conditions: $\Psi_+(\lambda) = \Psi_-(\lambda) i\sigma_1$, $\lambda \in (i\eta_1, i\eta_2)$ and $\Psi_+(\lambda) = \Psi_-(\lambda) (-i\sigma_1)$, $\lambda \in (-i\eta_2, -i\eta_1)$. Since these jump matrices are independent of $x$ and $t$, it follows that $\partial_x\Psi(\Psi)^{-1}$ has no jumps for $\lambda \in \mathbb{C}$.

    As $\lambda \to \infty$, recalling the definition of $p(\lambda)$ in (\ref{integral:pq}), we have  
\[
p(\lambda) = \lambda + \sum_{k=1}^{\infty}\frac{p_k}{\lambda^k}, 
\qquad 
p_1 = \left[ \tfrac{\eta_1^2-\eta_2^2}{2} 
+ \eta_2^2 \tfrac{E\!\left(m\right)}{K\!\left(m\right)}  \right].
\]
Moreover, by the definition of $O(x,t,\lambda)$ in (\ref{O:genus1}),  
\begin{equation}\label{expansion:O}
    O(x,t;\lambda) = I + \frac{O^{(1)}}{\lambda} + \frac{O^{(2)}}{\lambda^2} + \mathcal{O}\!\left(\lambda^{-3}\right),
\end{equation}
where
\[
O^{(1)} =
\begin{pmatrix}
m_{1,1} & 0 \\[4pt]
0 & m_{2,1}
\end{pmatrix}, \qquad
O^{(2)} =
\begin{pmatrix}
m_{1,2} + \dfrac{p_1}{2} - \dfrac{\partial_x m_{1,1}}{2i} &
-\dfrac{p_1}{2} - \dfrac{\partial_x m_{2,1}}{2i} \\[10pt]
-\dfrac{p_1}{2} + \dfrac{\partial_x m_{1,1}}{2i} &
m_{2,2} + \dfrac{p_1}{2} + \dfrac{\partial_x m_{2,1}}{2i}
\end{pmatrix}.
\]
Substituting these expansions into $\partial_x\Psi(\Psi)^{-1}$, we obtain
\[
\begin{aligned}
\partial_x\Psi(\Psi)^{-1}
&= 
\begin{pmatrix}
1 & 1 \\ -i\lambda & i\lambda
\end{pmatrix}
\Big[ O_x O^{-1} + O \big( -i p(\lambda)\sigma_3 \big) O^{-1} \Big]
\begin{pmatrix}
1 & 1 \\ -i\lambda & i\lambda
\end{pmatrix}^{-1} \\[6pt]
&=
\begin{pmatrix}
1 & 1 \\ -i\lambda & i\lambda
\end{pmatrix}
\Bigg[
\Big(I + \tfrac{O^{(1)}}{\lambda} + \tfrac{O^{(2)}}{\lambda^2} + \mathcal{O}(\lambda^{-3})\Big)
\Big(-i\lambda\sigma_3 - i\tfrac{p_1}{\lambda}\sigma_3 + \mathcal{O}(\lambda^{-2})\Big) \\[-2pt]
&\quad \times
\Big(I - \tfrac{O^{(1)}}{\lambda} + \tfrac{(O^{(1)})^2 - O^{(2)}}{\lambda^2} + \mathcal{O}(\lambda^{-3})\Big)
+ \tfrac{O^{(1)}_x}{\lambda} + \mathcal{O}(\lambda^{-2})
\Bigg]
\begin{pmatrix}
1 & 1 \\ -i\lambda & i\lambda
\end{pmatrix}^{-1} \\[6pt]
&=
\begin{pmatrix}
1 & 1 \\ -i\lambda & i\lambda
\end{pmatrix}
\Bigg[
-i\lambda \sigma_3 + i[\sigma_3,O^{(1)}] \\
&+ \frac{1}{\lambda}\Big( (O^{(1)})_x- i p_1 \sigma_3-i[O^{(2)},\sigma_3] \Big)
+ \mathcal{O}(\lambda^{-2})
\Bigg]
\begin{pmatrix}
1 & 1 \\ -i\lambda & i\lambda
\end{pmatrix}^{-1} \\[6pt]
&=
\begin{pmatrix}
0 & 1 \\
-\lambda^2 - 2p_1 - 2i\,\partial_x m_{1,1} & 0
\end{pmatrix}
+ \mathcal{O}(\lambda^{-1}).
\end{aligned}
\]
Regarding the limit $\lambda \to 0$, we observe that
\begin{equation}\label{eq:m1m2}
    m_{1,\pm}(0) = m_{2,\pm}(0)\,e^{\pm i(\Omega+\Delta)}, 
    \qquad 
    p_{\pm}(0) = \mp \frac{\Omega_1}{2}.
\end{equation}
It then follows that
\begin{equation}\label{eq:m1m2partial}
\begin{aligned}
    \partial_x m_{1,\pm}(0) &= \partial_x m_{2,\pm} \, e^{\pm i ( \Omega + \Delta)} \mp i \Omega_1 m_{2,\pm} \, e^{\pm i ( \Omega + \Delta)}, \\
    \partial_x^2 m_{1,\pm}(0) &= \partial_x^2 m_{2,\pm} \, e^{\pm i ( \Omega + \Delta)} \mp 2i \Omega_1 \partial_x m_{2,\pm} \, e^{\pm i ( \Omega + \Delta)} - \Omega_1^2 m_{2,\pm} \, e^{\pm i ( \Omega + \Delta)}.
\end{aligned}  
\end{equation}
Then, as $\lambda \to 0$, a direct computation yields
\[
\partial_x \Psi \, (\Psi)^{-1}
= \frac{1}{\lambda}
\begin{pmatrix}
    0 & \alpha_{\pm} \\[0.3em]
    \beta_{\pm} & \partial_x \alpha_{\pm}
\end{pmatrix}
+ \mathcal{O}(1),
\]
where
\[
\begin{aligned}
    \alpha &= 2 i m_1 m_{2x} - 2 i m_{1x} m_2 + 4 m_1 m_2 p, \\
    \beta &= 2 \big( m_{1xx} (-i m_{2x} + m_2 p) + i m_{1x} (m_{2xx} + 4 i m_{2x} p - 3 m_2 p^2) \\
    &\quad + m_1 p (m_{2xx} + 3 i m_{2x} p - 2 m_2 p^2) \big).
\end{aligned}
\]
Using the relations in~\eqref{eq:m1m2} and~\eqref{eq:m1m2partial}, we conclude that $\alpha = \partial_x \alpha = 0$ and $\beta = 0$. Thus, as $\lambda \to 0$, the quantity $\partial_x \Psi \, (\Psi)^{-1}$ is regular.
Combining this with its behavior as $\lambda \to \infty$, we conclude that $\partial_x \Psi \, (\Psi)^{-1}$ is an entire function of $\lambda$. By Liouville's theorem, it must therefore satisfy the spatial part of the Lax pair given in \eqref{Lax:matrix}. 
\end{proof}
\begin{rmk}
In fact, the eigenvector $\varphi(x,t;\lambda)$ in \eqref{def:phixt} can be regarded as the \emph{Baker–Akhiezer function} of the KdV equation~\eqref{eq:KdV} (see~\cite{belokolos_algebro-geometric_nodate}), whose spectrum on the energy plane ($z = \lambda^2$) is given by $(-\infty, -\eta_2^2] \cup [-\eta_1^2, 0]$.
\end{rmk}
\begin{cor}\label{cor:u}
The solution of the KdV equation expressed via the RH problem~\ref{RHP:genus1} is given by
\begin{equation}\label{u:genus-1}
u(x,t) = -2p_1 - 2i\,\partial_x m_{1,1}(x,t),
\end{equation}
where $m_{1,1}$ denotes the coefficient in the large-$\lambda$ expansion of the first entry of 
$m(x,t,\lambda)$ in~\eqref{jump:g1-vector}. In particular,
\begin{equation*}
u(x,t) = \eta_2^2 - \eta_1^2
- 2\eta_2^2\, 
\mathrm{dn}^2\!\left(
  \eta_2\big[x - 2(\eta_1^2 + \eta_2^2)t - x_0\big] + K(m)\, \middle|\, m
\right),
\end{equation*}
with $m=\frac{\eta_1}{\eta_2}$.
\end{cor}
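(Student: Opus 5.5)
The plan has two parts. First we identify the potential from the expansion of the Lax matrix at $\lambda=\infty$ already computed in the proof of Lemma~\ref{lem:Lax-verify}. Then we evaluate the theta-function formula \eqref{solution:m} explicitly.

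For the first part, recall that the proof showed $\partial_x\Psi\,\Psi^{-1}$ is entire and computed its large-$\lambda$ expansion. By Liouville's theorem it equals the polynomial part of that expansion:
\[
\partial_x\Psi\,\Psi^{-1}=\begin{pmatrix}0&1\\-\lambda^2-2p_1-2i\,\partial_x m_{1,1}&0\end{pmatrix}.
\]
Comparing with $L$ in \eqref{Lax:matrix}, the $(2,1)$ entry must be $-\lambda^2+u$. This gives \eqref{u:genus-1} directly. We should check that the $\mathcal{O}(\lambda^{-1})$ terms are absent, which is automatic since the function is entire. We should also note that the diagonal entries vanish because $\det O\equiv1$ and the trace of the Lax matrix is zero, so the identification is consistent.

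The second part is the identification with the $\mathrm{dn}^2$ profile, and this is the main obstacle. The plan is as follows.

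\begin{enumerate}
\item Expand $m_1(\lambda)$ from \eqref{solution:m} at $\lambda=\infty$. Write $J(\lambda)=-\tfrac14+\tfrac{c}{\lambda}+\mathcal{O}(\lambda^{-2})$, where $c=\tfrac{i\eta_2}{4K(m)}$ comes from the normalization of $\omega$. Since $\gamma=1+\mathcal{O}(\lambda^{-2})$, we get
\[
m_{1,1}=2c\,\partial_z\Bigl[\log\vartheta_3\bigl(z+\tfrac{\Omega+\Delta}{2\pi};2\tau\bigr)-\log\vartheta_3(z;2\tau)\Bigr]\Big|_{z=-1}.
\]
\item Differentiate in $x$. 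Since $\partial_x\Omega=-\Omega_1=-\pi\eta_2/K(m)$, this gives
\[
-2i\,\partial_x m_{1,1}=\text{const}\cdot\partial_z^2\log\vartheta_3\bigl(z+\tfrac{\Omega+\Delta}{2\pi};2\tau\bigr)\Big|_{z=-1}.
\]
The constant works out to a multiple of $\eta_2^2/K(m)^2$, and by periodicity the shift $z=-1$ can be dropped.
\item Apply the classical identity expressing $\partial_z^2\log\vartheta_3$ through $\mathrm{dn}^2$ with argument $2K\cdot(\text{theta argument})$, using the nome associated with $2\tau=iK'/K$. This requires checking that $2\tau$ equals the modular parameter $iK(m')/K(m)$ appropriate to modulus $m$. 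The formula for $\tau$ as written looks nonstandard, so this step needs care.
\item The argument of $\mathrm{dn}$ then becomes $K(m)(\Omega+\Delta)/\pi$ plus a shift by $K$, which equals $-\eta_2[x-x_0]+K(m)$ up to signs. The $t$-dependence $\Omega_2/\Omega_1=-2(\eta_1^2+\eta_2^2)$ yields the velocity $2(\eta_1^2+\eta_2^2)$. Because $\mathrm{dn}$ is even, the overall sign of the argument is harmless.
\item The identity contributes an additive constant $-2\eta_2^2E/K$ (the $E/K$ term from $\partial^2\log\vartheta$). Combined with $-2p_1=\eta_2^2-\eta_1^2-2\eta_2^2E/K$, these constants should cancel, leaving exactly $\eta_2^2-\eta_1^2-2\eta_2^2\,\mathrm{dn}^2(\cdot)$.
\end{enumerate}

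Careful bookkeeping of the constants in the last three steps is where errors are most likely. As a fallback, we can verify the result a posteriori. Since $\Psi$ solves the Lax pair, $u$ is a genus-one finite-gap potential with spectrum $(-\infty,-\eta_2^2]\cup[-\eta_1^2,0]$ and period $2\pi/\Omega_1$. By uniqueness of such potentials up to translation (Appendix~\ref{appendix:periodic solution}), $u$ equals the stated $\mathrm{dn}^2$ profile up to phase. The phase $x_0$ is then fixed by evaluating at a single point, for example by matching the $x$-location of the minimum of $u$ with the zero of the theta argument.
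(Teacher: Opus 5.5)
Your proposal is correct and follows the paper's argument. Formula \eqref{u:genus-1} is read off from the $(2,1)$ entry of $\partial_x\Psi\,\Psi^{-1}$ computed in Lemma~\ref{lem:Lax-verify}. The theta formula \eqref{solution:m} then gives $m_{1,1}=-i\,\partial_x\log\vartheta_3\bigl(\tfrac{\Omega+\Delta}{2\pi};2\tau\bigr)$; your coefficient $c=\tfrac{i\eta_2}{4K(m)}$ in $J$ is the value consistent with this. Finally, the classical identity $\partial_s^2\log\vartheta_3\bigl(\tfrac{s}{2K(m)};2\tau\bigr)=\mathrm{dn}^2\bigl(s+K(m)\,|\,m\bigr)-\tfrac{E(m)}{K(m)}$ finishes the proof; it applies because $2\tau=iK(\sqrt{1-m^2})/K(m)$.

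Two small fixes. In step~5 the identity contributes $+2\eta_2^2E(m)/K(m)$ to $u$, not $-2\eta_2^2E(m)/K(m)$; that is what cancels the $-2\eta_2^2E(m)/K(m)$ in $-2p_1$. Also, dropping the sign of the $\mathrm{dn}$ argument needs $2K(m)$-periodicity as well as evenness, since $\mathrm{dn}(-v+K)=\mathrm{dn}(v-K)=\mathrm{dn}(v+K)$.
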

\begin{proof}
    As $\lambda \to \infty$, we have the expansion
    \[
        2J(\lambda; \eta_1, \eta_2)
        = -\frac{1}{2}
        + \frac{1}{\lambda}\!\left[\frac{i\eta_2}{4K(m)}\right]
        + \mathcal{O}(\lambda^{-2}).
    \]
    Combining this with $\Omega = -x\Omega_1 - t\Omega_2$, it follows that the large-$\lambda$ expansion of $m_1(\lambda)$ in~\eqref{solution:m} yields
    \[
        m_{1,1} = -i\,\partial_x \log \vartheta_3\!\left(\frac{\Omega + \Delta}{2\pi};\, 2\tau\right).
    \]
    Substituting $p_1$ and the above expansion into~\eqref{u:genus-1}, we obtain
    \begin{equation*}
        u(x,t)
        = \eta_2^2 - \eta_1^2
        - 2\eta_2^2 \frac{E(m)}{K(m)}
        - 2\,\partial_x^2
        \log \vartheta_3\!\left(\frac{\Omega + \Delta}{2\pi};\, 2\tau\right).
    \end{equation*}
    Moreover, using $2\tau = i\,\dfrac{K(1-m)}{K(m)}$ together with the identity given in~\cite[p.~45, Exercise~16]{lawden_elliptic_1989} and (3.5.5), we have
    \[
        \partial_x^2 \log \vartheta_3\!\left(\frac{\Omega + \Delta}{2\pi};\, 2\tau\right)
        = -\eta_2^2 \frac{E(m)}{K(m)}
        + \dn^2\!\left(
            2K(m)\!\left(\frac{\Omega + \Delta}{2\pi}\right)
            + K(m)\, \bigg|\, m
        \right).
    \]
    Consequently,
    \[
        u(x,t)
        = \eta_2^2 - \eta_1^2
        - 2\eta_2^2\,
        \dn^2\!\left(
            \eta_2\!\big[x - 2(\eta_1^2 + \eta_2^2)t - x_0\big]
            + K(m)\, \middle|\, m
        \right).
    \]
\end{proof}

\subsection*{Jost solution for step-like finite-gap potentials}
Recall that both $u_0^l(x)$ and $u_0^r(x)$ in \eqref{initial:ulur} are the periodic finite-gap solutions of the KdV equation associated with the spectral bands
\[
\Sigma^l = (i\eta_1^l, i\eta_2^l) \cup (-i\eta_2^l, -i\eta_1^l), \quad 
\Sigma^r = (i\eta_1^r, i\eta_2^r) \cup (-i\eta_2^r, -i\eta_1^r).
\]
In what follows, we denote by $O^{l}(x,t;\lambda)$ and $O^{r}(x,t;\lambda)$ the RH problems associated with $u^{l}(x,t)$ and $u^{r}(x,t)$, respectively. Let $s \in \{r, l\}$. For simplicity, we use $s$ to denote either case $r$ or $l$, and introduce the following abbreviations for various related functions and quantities:
\[
dp^{s} = dp(\eta_1^{s}, \eta_2^{s}), \qquad 
p^{s}(\lambda) = p(\lambda; \eta_1^{s}, \eta_2^{s}) = \int_{i\eta_2^{s}}^{\lambda} dp^{s},
\]
and similarly for $\Omega_1^{s}$ and other related quantities.

We introduce the Jost solutions associated with the period background solution $u^{l}(x,t)$ and $u^{r}(x,t)$ as follows:
\begin{equation}\label{def:Psi0}
\begin{aligned}
\Psi_0^{s}(x,t;\lambda) &:= \begin{pmatrix}
1 & 1 \\[0.5ex]
-i\lambda & i\lambda
\end{pmatrix}
O^{s}(x,t;\lambda)\, e^{-i[(x-x_0^l)p^{s}(\lambda)+tq^{s}(\lambda)]\sigma_3}. \\
\end{aligned}
\end{equation}
By Lemma~\ref{lem:Lax-verify}, the functions $\Psi_0^{l}(x,t;\lambda)$ and $\Psi_0^{r}(x,t;\lambda)$ satisfy the matrix Lax pair~\eqref{Lax:matrix} with potentials $u^{l}(x,t)$ and $u^{r}(x,t)$ in \eqref{initial:ulur}, respectively. In the following analysis, we focus on the $x$–part of the Lax pair~\eqref{Lax:matrix} while treating $t=0$ as a fixed parameter. The Jost solutions corresponding to the initial data~\eqref{initial} satisfy the asymptotic boundary conditions
\begin{equation}\label{Jost:asymp-x}
\begin{aligned}
\Psi^l(x;\lambda) &= \Psi_0^{l}(x;\lambda)\left(I + \mathcal{O}(|x|^{-1})\right), \quad x\to+\infty, \\
\Psi^r(x;\lambda) &= \Psi_0^{r}(x;\lambda)\left(I + \mathcal{O}(|x|^{-1})\right), \quad x\to-\infty.
\end{aligned}
\end{equation}
Suppose the Jost solution $\Psi^s(x;\lambda)$ takes the form
\begin{equation}\label{Jost}
\begin{aligned}
\Psi^s(x;\lambda) &:= \Psi_0^{s}(x;\lambda) \,
e^{i (x-x_0^s) p^{s}(\lambda)\sigma_3} \,
\bJ^{s}(x;\lambda) \,
e^{-i (x-x_0^s) p^{s}(\lambda)\sigma_3}. \\
\end{aligned}
\end{equation}
\subsection*{The normalized eigenfunctions $\bJ^l(x;\lambda)$ and $\bJ^r(x;\lambda)$.}
\begin{pro}\label{prop:laxequationforJ}
Let $\Psi^l(x;\lambda)$ and $\Psi^r(x;\lambda)$ be the Jost solutions of the Lax pair~\eqref{Lax:matrix}. Then $\bJ^{l}(x;\lambda)$ and $\bJ^{r}(x;\lambda)$ satisfy the differential equations:
\begin{equation}\label{eq:J}
\begin{aligned}
&\partial_x\bJ^{s}(x;\lambda) + i p^{s}(\lambda)\,[\sigma_3, \bJ^{s}(x;\lambda)] = U^{s}(x;\lambda) \bJ^{s}(x;\lambda), \\
\end{aligned}
\end{equation}
where 
\[
\begin{aligned}
U^{s}(x;\lambda) &= (O^{s}(x;\lambda))^{-1} \left[\frac{i (u_0(x) - u_0^{s}(x))}{2\lambda}
\begin{pmatrix}
1 & 1 \\[2pt]
-1 & -1
\end{pmatrix} \right] O^{s}(x;\lambda), \\ 
\end{aligned}
\]
Moreover, $\bJ^{l}(x;\lambda)$ and $\bJ^{r}(x;\lambda)$ admit the Volterra integral representations:
\begin{equation}\label{eq:j-Integral}
\begin{aligned}
\bJ^{l}(x;\lambda) &= I + \int_{-\infty}^x 
e^{-i(x-y) p^{l}(\lambda){\sigma}_3} \,
U^{l}(y;\lambda) \,
\bJ^{l}(y;\lambda) e^{i(x-y) p^{l}(\lambda){\sigma}_3}  dy, \\
\bJ^{r}(x;\lambda) &= I + \int_{+\infty}^x 
e^{-i(x-y) p^{r}(\lambda){\sigma}_3} \,
U^{r}(y;\lambda) \,
\bJ^{r}(y;\lambda) e^{i(x-y) p^{r}(\lambda){\sigma}_3}  dy.
\end{aligned}
\end{equation}
\end{pro}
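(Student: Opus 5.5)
The plan is to substitute the ansatz \eqref{Jost} into the $x$-part of \eqref{Lax:matrix} with potential $u_0$, and use Lemma~\ref{lem:Lax-verify} for the background $\Psi_0^s$. Write $\Psi_0^s=T\,O^s e^{-i(x-x_0^s)p^s\sigma_3}$ with $T=\begin{pmatrix}1&1\\-i\lambda&i\lambda\end{pmatrix}$. Then
\[
\Psi^s=T\,O^s\,\bJ^s\,e^{-i(x-x_0^s)p^s\sigma_3},
\]
since the exponentials $e^{\mp i(x-x_0^s)p^s\sigma_3}$ adjacent to $\bJ^s$ cancel against those in $\Psi_0^s$. (The $t$-dependent factor is absent, since $t=0$ is fixed.) The equation for $\Psi^s$ splits as
\[
\partial_x\Psi^s=L_0^s\Psi^s+\begin{pmatrix}0&0\\u_0-u_0^s&0\end{pmatrix}\Psi^s ,
\]
where $L_0^s$ is the Lax matrix with potential $u_0^s$.

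Differentiating the factorization, the terms $(\partial_x\Psi_0^s)$ reproduce $L_0^s\Psi^s$ by Lemma~\ref{lem:Lax-verify}. The derivative of the right exponential contributes $-ip^s\bJ^s\sigma_3$, while rewriting $\partial_x\Psi_0^s$ contributes $-ip^s\sigma_3\bJ^s$ on the other side. Together these give exactly the commutator term $ip^s[\sigma_3,\bJ^s]$ once moved to the left. What remains is
\[
T O^s(\partial_x\bJ^s)=\begin{pmatrix}0&0\\u_0-u_0^s&0\end{pmatrix}T O^s\bJ^s .
\]
Multiplying by $(O^s)^{-1}T^{-1}$ on the left yields $U^s$ with kernel $T^{-1}\begin{pmatrix}0&0\\ \delta&0\end{pmatrix}T$. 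The inverse is $T^{-1}=\frac{1}{2i\lambda}\begin{pmatrix}i\lambda&-1\\ i\lambda&1\end{pmatrix}$, and a two-line computation gives $\frac{\delta}{2i\lambda}\begin{pmatrix}-1&-1\\1&1\end{pmatrix}=\frac{i\delta}{2\lambda}\begin{pmatrix}1&1\\-1&-1\end{pmatrix}$. Here $\delta=u_0-u_0^s$. Invertibility of $O^s$ away from $\lambda=0$ follows from $\det O^s\equiv1$, and the point $\lambda=0$ is handled by continuity.

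For the Volterra representation, I would rewrite the equation as
\[
\partial_x\bigl(e^{ixp^s\sigma_3}\bJ^s e^{-ixp^s\sigma_3}\bigr)=e^{ixp^s\sigma_3}U^s\bJ^s e^{-ixp^s\sigma_3}.
\]
Integrating from $\infty_s$ to $x$ then uses the normalization $\bJ^s\to I$ as $x\to\infty_s$, which is the content of \eqref{Jost:asymp-x}. Conjugating back produces \eqref{eq:j-Integral}.

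The main obstacle is justifying that the boundary term at $\infty_s$ is $I$, and that the integrals converge. On the relevant half-plane, $\Im p^s$ has the appropriate sign, so the oscillatory factors are bounded. Moreover, $O^s$ is bounded in $x$ because it is quasi-periodic via theta functions, and $\delta$ is integrable by \eqref{initial}. A standard Neumann-series estimate then gives existence and the required limit, column by column.
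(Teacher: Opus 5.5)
Your proposal is correct and follows the same route as the paper. You substitute the factorization \eqref{Jost} into the $x$-part of \eqref{Lax:matrix}, cancel the background contribution via Lemma~\ref{lem:Lax-verify} to get \eqref{eq:J}, and integrate from $\infty_s$ using \eqref{Jost:asymp-x}; your explicit computation of $T^{-1}\bigl(\begin{smallmatrix}0&0\\ \delta&0\end{smallmatrix}\bigr)T$ just fills in a step the paper leaves implicit. One sharpening: \eqref{Jost:asymp-x} states that $(\Psi_0^s)^{-1}\Psi^s=e^{i(x-x_0^s)p^s\sigma_3}\bJ^s e^{-i(x-x_0^s)p^s\sigma_3}\to I$, which is, up to a constant conjugation, exactly your boundary term, so the normalization controls this conjugated quantity rather than $\bJ^s$ itself.
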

\begin{proof}
Differentiating \eqref{Jost} with respect to $x$, we obtain
\[
\partial_x \Psi^{l}(x;\lambda) = L^{l} \Psi^{l}(x;\lambda) 
+ \Psi_0^{l}(x;\lambda) e^{i (x-x_0^l) p^{l}(\lambda)\sigma_3}
\Big( \partial_x \bJ^{l}(x;\lambda) + i p^{l}(\lambda)[\sigma_3,\bJ^{l}(x;\lambda)] \Big)
e^{-i (x-x_0^l) p^{l}(\lambda)\sigma_3},
\]
where $[A,B] := AB - BA$ denotes the matrix commutator and
\(
L^{l} = \begin{pmatrix}
0 & 1 \\
-\lambda^2 + u_0^l(x) & 0
\end{pmatrix}.
\)
Since $\Psi^l(x;\lambda)$ is a Jost solution of the Lax pair~\eqref{Lax:matrix}, it must satisfy the $x$-part of the system. This implies that $\bJ^{l}(x;\lambda)$ satisfies the differential equation~\eqref{eq:J}.

Moreover, in view of the boundary condition~\eqref{Jost:asymp-x}, $\bJ^{l}(x;\lambda)$ can be expressed in the Volterra integral form~\eqref{eq:j-Integral}. The analysis for $\bJ^{r}(x;\lambda)$ follows analogously.
\end{proof}

Denote the subscript ``1'' for bands on the upper half-plane and ``2'' for bands on the lower half-plane, that is,
\[
\Sigma_1^s:=(i\eta_1^s,i\eta_2^s),
 \quad \Sigma_2^s:=(-i\eta_2^s,-i\eta_1^s).
\]
Since we require that $\Sigma_1^l$ and $\Sigma_1^r$ do not share any endpoints, there are six possible configurations in total.
Figure~\ref{fig:bands} depicts the six cases where $\eta_1^l \neq \eta_1^r$ and $\eta_2^l \neq \eta_2^r$.

\begin{figure}[H]
    \centering
    \begin{minipage}{0.15\textwidth}
        \includegraphics[width=\textwidth]{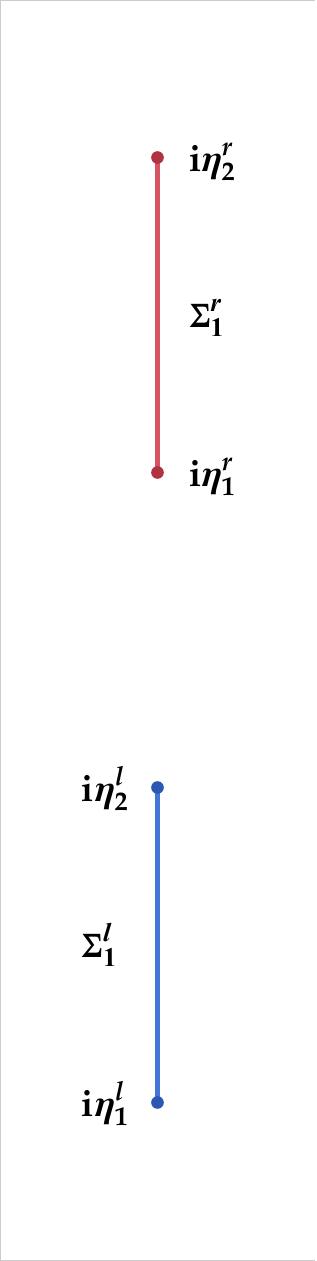}
         \centering
        \textrm{(i)}
    \end{minipage}
    \hfill
    \begin{minipage}{0.15\textwidth}
        \includegraphics[width=\textwidth]{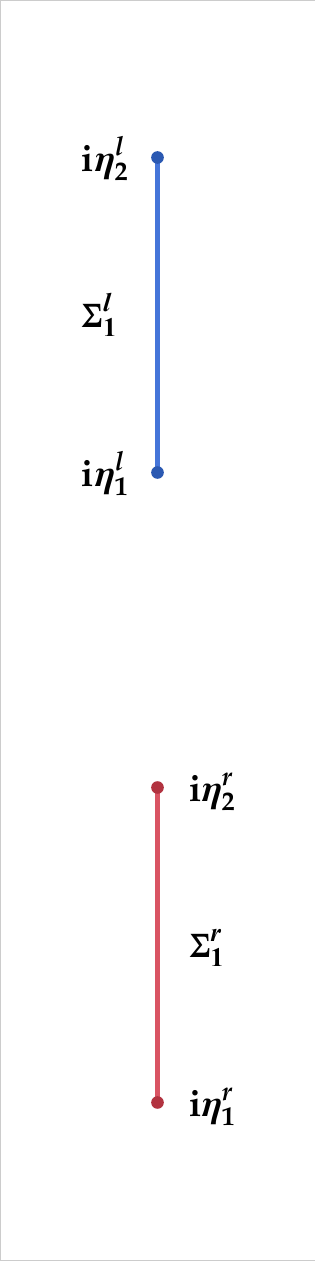}
         \centering
        \textrm{(ii)}
    \end{minipage}
    \hfill
    \begin{minipage}{0.15\textwidth}
        \includegraphics[width=\textwidth]{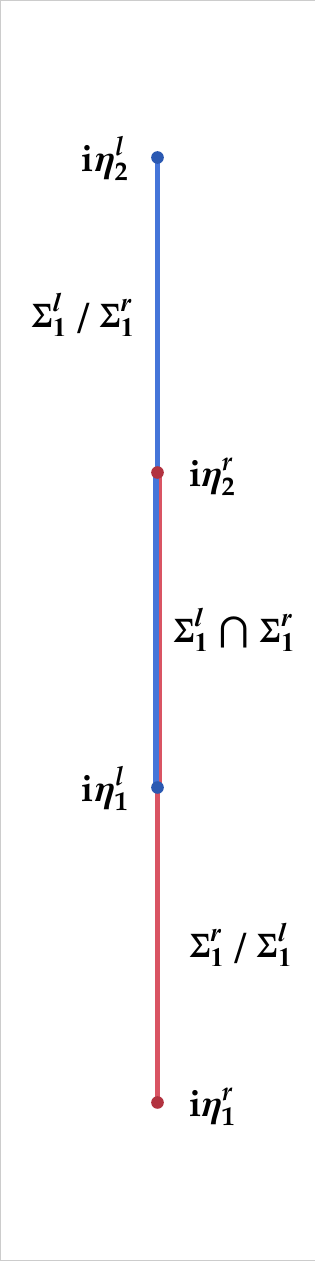}
        \centering
        \textrm{(iii)}
    \end{minipage}
    \hfill
    \begin{minipage}{0.15\textwidth}
        \includegraphics[width=\textwidth]{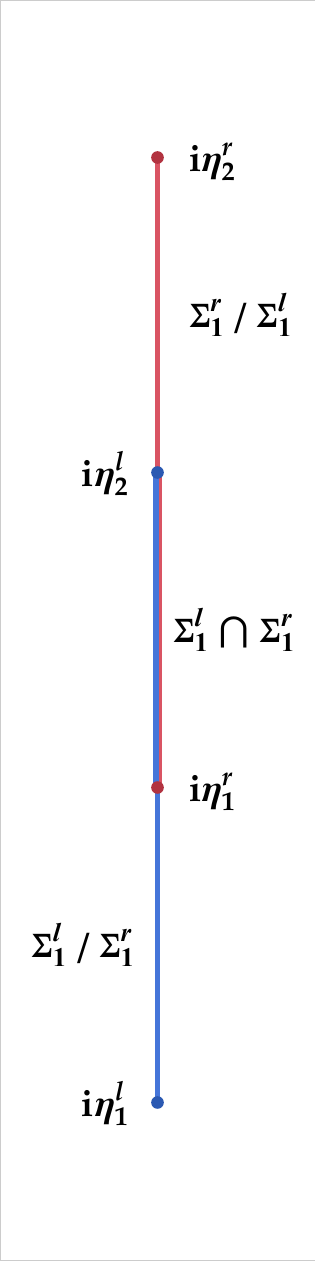}
        \centering
        \textrm{(iv)}
    \end{minipage}
    \hfill
    \begin{minipage}{0.15\textwidth}
        \includegraphics[width=\textwidth]{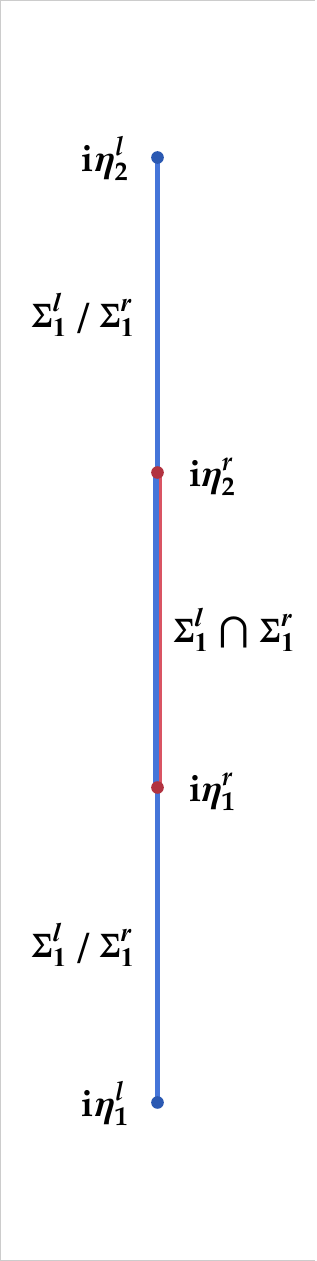}
        \centering
        \textrm{(v)}
    \end{minipage}
    \hfill
    \begin{minipage}{0.15\textwidth}
        \includegraphics[width=\textwidth]{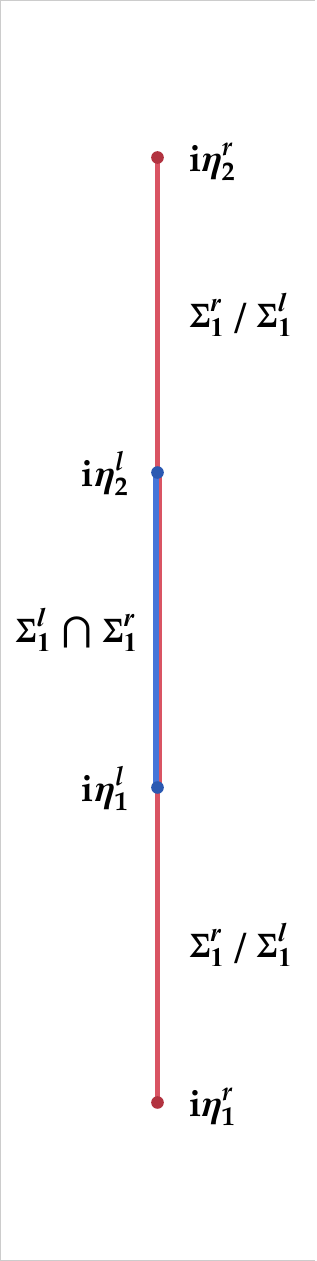}
        \centering
        \textrm{(vi)}
    \end{minipage}
    \caption{\footnotesize The six configurations (i)--(vi) of step-like finite-gap potential bands on the upper half plane. The red ($\Sigma_1^r$) and blue ($\Sigma_1^l$) solid lines represent the spectral bands of $u_0^r(x)$ and $u_0^l(x)$, respectively.}
    \label{fig:bands}
\end{figure}

\begin{pro}\label{prop:J}
   Suppose that the initial data $u_0(x)$ satisfies the condition \eqref{initial} and assumption \ref{assumption}. Then the integral equations \eqref{eq:j-Integral} admits a unique solution $\mathbf{J}^{l}(x;\lambda)$  and $\mathbf{J}^{r}(x;\lambda)$, respectively, which enjoy the following properties:
\begin{enumerate}
    \item The function $\mathbf{J}^{l}(x;\lambda)$ is defined for $x \in \mathbb{R}$ and 
$\lambda \in \big(\overline{\mathbb{C}}_+ \cup \Sigma^l_2, \;\overline{\mathbb{C}}_- \cup \Sigma^l_1\big) \setminus \{0,\pm i\eta_1^l,\pm i\eta_2^l\}$, 
meaning that the first column of $\mathbf{J}^{l}(x;\lambda)$ lies in $\big(\overline{\mathbb{C}}_+ \cup \Sigma^l_2\big)\setminus \{0,\pm i\eta_1^l,\pm i\eta_2^l\}$ 
and the second column lies in $\big(\overline{\mathbb{C}}_- \cup \Sigma^l_1\big)\setminus \{0,\pm i\eta_1^l,\pm i\eta_2^l\}$. For each $\lambda \in \big(\overline{\mathbb{C}}_+ \cup \Sigma^l_2, \;\overline{\mathbb{C}}_- \cup \Sigma^l_1\big) \setminus \{0,\pm i\eta_1^l,\pm i\eta_2^l\}$, $\bJ^l(\cdot,\lambda)$ is $n_0$–times differentiable for any $x\in\mathbb{R}$.

\item For each $x \in \mathbb{R}$, the function $\mathbf{J}^{l}(x,\cdot)$ is continuous for 
$\lambda \in (\overline{\mathbb{C}}_+, \overline{\mathbb{C}}_-) \setminus [-i\eta_2^l, i\eta_2^l]$ 
and analytic in the corresponding interior regions. Moreover, $\mathbf{J}^{l}(x,\cdot)$ is $m_0$ differentiable for the left side and the right side $\lambda \in (\Sigma^l\cup\R) \setminus \{0,\pm i\eta_1^l,\pm i\eta_2^l\}$. In particular, the boundary values of $\mathbf{J}^{l}(x;\lambda)$ for $\lambda \in \Sigma^{l}$ satisfy

    \begin{equation}\label{jumps:Jl-band}
        \mathbf{J}^{l}(x;\lambda_+) = \sigma_1 \mathbf{J}^{l}(x;\lambda_-) \sigma_1, \quad \lambda \in \Sigma^l,
    \end{equation}
    and
    \begin{equation}\label{jumps:Jl-gap}
    \begin{aligned}
        &\bJ^l_1(x;\lambda_+) = \begin{pmatrix}
            1 & 0 \\
            0 & e^{-2i(x\Omega_1^l - \Delta^l)}
        \end{pmatrix} \bJ^l_1(x;\lambda_-), && \lambda \in (0, i\eta_1^l), \\
        &\bJ^l_2(x;\lambda_+) = \begin{pmatrix}
            e^{2i(x\Omega_1^l - \Delta^l)} & 0 \\
            0 & 1
        \end{pmatrix} \bJ^l_2(x;\lambda_-), && \lambda \in (-i\eta_1^l, 0),
    \end{aligned}
    \end{equation}
where $\bJ^l_i(x,\lambda_{\pm})$, $i=1,2$, denote the boundary values (from the left/right side) of the $i$th column of the matrix-valued function $\bJ^l(x;\lambda)$.

    \item $\mathbf{J}^{r}(x;\lambda)$ is defined for $x \in \mathbb{R}$ and $\lambda \in (\overline{\mathbb{C}}_-\cup\Sigma_1^r, \overline{\mathbb{C}}_+\cup\Sigma_2^r) \setminus \{0,\pm i\eta_1^r,\pm i\eta_2^r\}$. For each $\lambda \in (\overline{\mathbb{C}}_-\cup\Sigma_1^r, \overline{\mathbb{C}}_+\cup\Sigma_2^r) \setminus \{0,\pm i\eta_1^r,\pm i\eta_2^r\}$, $\mathbf{J}^{r}(\cdot;\lambda)$ is $n_0$–times differentiable for any $x\in\mathbb{R}$.
    
    \item For each $x \in \mathbb{R}$, the function $\mathbf{J}^{r}(x,\cdot)$ is continuous on $\lambda \in (\overline{\mathbb{C}}_-, \overline{\mathbb{C}}_+ )\setminus[-i\eta_2^r,i\eta_2^r]$ and analytic in their interior regions. Moreover, $\mathbf{J}^{r}(x,\cdot)$ is $m_0$ differentiable for the left side and the right side $\lambda \in (\Sigma^r\cup\R) \setminus \{0,\pm i\eta_1^r,\pm i\eta_2^r\}$. In particular, the boundary values of $\mathbf{J}^{r}(x;\lambda)$ for $\lambda \in \Sigma^{r}$ satisfy
    \begin{equation}
        \mathbf{J}^{r}(x;\lambda_+) = \sigma_1 \mathbf{J}^{r}(x;\lambda_-) \sigma_1, \quad \lambda \in \Sigma^l,
    \end{equation}
    and
    \begin{equation}
    \begin{aligned}
        &\bJ^r_1(x;\lambda_+) = \begin{pmatrix}
            1 & 0 \\
            0 & e^{-2i(x\Omega_1^r - \Delta^r)}
        \end{pmatrix} \bJ^r_1(x;\lambda_-), && \lambda \in (-i\eta_1^r,0), \\
        &\bJ^r_2(x;\lambda_+) = \begin{pmatrix}
            e^{2i(x\Omega_1^r - \Delta^r)} & 0 \\
            0 & 1
        \end{pmatrix} \bJ^r_2(x;\lambda_-), && \lambda \in (0,i\eta_1^r).
    \end{aligned}
    \end{equation}
    \item The functions $\bJ^l(x;\lambda)$ and $\bJ^r(x;\lambda)$ satisfy the following symmetry relations:
    \begin{equation}\label{sym:Jminus}
    \begin{aligned}
    &\bJ^l(x;-\lambda)=\sigma_1\bJ^l(x;\lambda)\sigma_1,&&\lambda \in (\overline{\mathbb{C}}_+, \overline{\mathbb{C}}_- )\setminus[-i\eta_2^l,i\eta_2^l],\\
    &\bJ^r(x;-\lambda)=\sigma_1\bJ^r(x;\lambda)\sigma_1,&&\lambda \in (\overline{\mathbb{C}}_-, \overline{\mathbb{C}}_+ )\setminus[-i\eta_2^r,i\eta_2^r].\\
    \end{aligned}
    \end{equation}
     Moreover, the following complex conjugation symmetries hold:
    \begin{equation}\label{sym:Jconjugate}
    \begin{aligned}
    &\overline{\bJ^l(x,\bar\lambda)}=\sigma_1\bJ^l(x;\lambda)\sigma_1,&&\lambda \in (\overline{\mathbb{C}}_+, \overline{\mathbb{C}}_- )\setminus\{0\},\\
    &\overline{\bJ^r(x,\bar\lambda)}=\sigma_1\bJ^r(x;\lambda)\sigma_1,&&\lambda \in (\overline{\mathbb{C}}_-, \overline{\mathbb{C}}_+ )\setminus\{0\}.\\
    \end{aligned}
    \end{equation}
\item For each $x \in \R$ and for every $\lambda$ belonging to the domains of $\bJ^l(x;\lambda)$ and $\bJ^r(x;\lambda)$, the determinants of $\bJ^l(x;\lambda)$ and $\bJ^r(x;\lambda)$ satisfy
\begin{equation}\label{eq:detJ}
    \det\!\big[\bJ^s(x;\lambda)\big] = 1.
\end{equation}
\end{enumerate}
\end{pro}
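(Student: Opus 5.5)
We treat the Volterra equations \eqref{eq:j-Integral} column by column, following the standard Neumann-series argument. Take $\bJ^l$ and write the first column explicitly:
\[
\bJ^l_1(x;\lambda)=e_1+\int_{-\infty}^x \begin{pmatrix}1&0\\0&e^{2i(x-y)p^l(\lambda)}\end{pmatrix}U^l(y;\lambda)\bJ^l_1(y;\lambda)\,dy,
\]
and analogously for the second column with the factor $e^{-2i(x-y)p^l(\lambda)}$ in the first row.

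The first step is a bound on the exponentials. For $y<x$ the kernel is bounded exactly when $\Im p^l(\lambda)\ge0$. From the normalization $p^l(\lambda)=\lambda+\mathcal O(\lambda^{-1})$, the reality of $p^l$ on $\R$ and its constancy in the gap, and the fact that $dp^l$ is real on the bands, we aim to show that $\Im p^l\ge0$ on $\overline{\C}_+$ and on $\Sigma_2^l$ (boundary values). We will check this with a harmonic-function/maximum-principle argument on $\C_+\setminus\Sigma_1^l$. The second column then works on $\overline{\C}_-\cup\Sigma_1^l$ by the symmetry $p^l(-\lambda)=-p^l(\lambda)$.

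The second step controls $U^l$. The entries of $O^l$ and $(O^l)^{-1}$ are bounded, with $\det O^l=1$ so the inverse is the adjugate. They are bounded uniformly in $y$ by quasi-periodicity of the theta functions in $\Omega$, and uniformly for $\lambda$ away from $0$ and the branch points. Hence
\[
|U^l(y;\lambda)|\le C(\lambda)\,|u_0(y)-u_0^l(y)|,
\]
and the Neumann series converges with
\[
|\bJ^l_1(x;\lambda)|\le \exp\Bigl(C\int_{-\infty}^x|u_0-u_0^l|\,dy\Bigr).
\]
This gives existence and uniqueness. Analyticity in the interior follows from locally uniform convergence of analytic iterates, and continuity up to the boundary from dominated convergence.

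The third step handles differentiability. For $x$-derivatives we use the differential equation \eqref{eq:J} inductively together with \eqref{initial} for $n\le n_0$. For $\lambda$-derivatives, each $\partial_\lambda$ falling on $e^{2i(x-y)p^l}$ produces a factor $(x-y)\partial_\lambda p^l$. We bound $|x-y|^k\le C(1+|x|)^k(1+|y|)^k$, so the moments $|y|^{m_0}$ in \eqref{initial} give $m_0$ derivatives on the boundary sets away from $0,\pm i\eta_j^l$. There $\partial_\lambda p^l$ and $\partial_\lambda O^l$ blow up, which is why those points are excluded.

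The fourth step derives the jump relations, symmetries and determinant. For the jumps, substitute boundary values into \eqref{eq:j-Integral}:
\begin{itemize}
\item On $\Sigma^l$ we have $p^l_+=-p^l_-$ and $O^l_+=O^l_-V^{(O)}$ with $V^{(O)}=\mp i\sigma_1$. Then $U^l_+=\sigma_1U^l_-\sigma_1$ and $e^{\pm ip_+\sigma_3}=\sigma_1e^{\pm ip_-\sigma_3}\sigma_1$, so $\sigma_1\bJ^l_-\sigma_1$ solves the same Volterra equation as $\bJ^l_+$, and uniqueness gives \eqref{jumps:Jl-band}.
\item On the gap $(0,i\eta_1^l)$, $p^l$ jumps by $\pm\Omega_1^l/2$ and $O^l$ jumps by $e^{i(\Omega+\Delta)\sigma_3}$. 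Conjugating, the phases $x\Omega_1^l$ cancel against the exponentials up to the diagonal factor in \eqref{jumps:Jl-gap}; again uniqueness for the modified equation concludes.
\end{itemize}
The symmetries \eqref{sym:Jminus} and \eqref{sym:Jconjugate} follow in the same way from the corresponding symmetries of $O^l$ (RH problem \ref{RHP:genus1}) and of $p^l$. For the determinant, $\operatorname{tr}U^l=0$ and \eqref{eq:J} give $\partial_x\det\bJ^l=0$ wherever both columns are defined, namely on $\R$ and on the bands. Letting $x\to-\infty$ gives $\det=1$. The case of $\bJ^r$ is identical with $x\to+\infty$ and the half-planes swapped.

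The main obstacle is expected in the first and second steps near $\lambda=0$, where $O^l$ has a simple pole. The prefactor $1/\lambda$ in $U^l$ combined with the poles of $O^l$ and $(O^l)^{-1}$ could give an apparent $\lambda^{-3}$ singularity. We exclude $\lambda=0$, as the statement does, and obtain only locally uniform bounds on compacts avoiding it. The sign claim on $\Im p^l$ is verified on the bands directly from $p_+=-p_-$ being real there.
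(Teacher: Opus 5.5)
Your proposal is correct and follows essentially the paper's own route. Both arguments use the maximum principle for $\Im p^l$, a Neumann series for the column-wise Volterra equations with $|U^l|\le C(\lambda)\,|u_0-u_0^l|$, moment bounds for the $\lambda$-derivatives, and uniqueness to transfer the band/gap jumps and the symmetries; your $\operatorname{tr}U^l=0$ argument for $\det\bJ^l=1$ is the same Liouville-formula argument the paper runs on $\Psi^l$ with $\det\Psi_0^l=2i\lambda$.

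One slip needs fixing: on the gap only $\operatorname{Re}p^l$ is constant, and the jump is $p^l_+-p^l_-=-\Omega_1^l$ (it is $p^l_\pm(0)$ that equals $\mp\Omega_1^l/2$). This full real period is what makes $\Im p^l$ harmonic across the gap, and what lets the $x\Omega_1^l$ phases cancel against the jump $e^{-i(x\Omega_1^l-\Delta^l)\sigma_3}$ of $O^l$; with a jump of $\Omega_1^l/2$ that cancellation would fail.
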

\begin{proof}
Before proving the proposition for $\bJ^s(x;\lambda)$, we define the following norms for a matrix-valued function $A(x)$:
\begin{equation*}
|A|
:= \sqrt{\operatorname{Tr}((A^*)^tA)}, 
\quad
\|A\|_{L^p(\gamma)}
:= \left(\int_\gamma |A(x)|^p\,dx\right)^{\frac{1}{p}},
\quad
\|A\|_{L^{\infty}(\gamma)}
:= \sup_{x\in\gamma} |A(x)|.
\end{equation*}

\noindent{\bf The proof of 1.} We begin by recalling the definition of $p^{l}(\lambda)$ from \eqref{integral:pq}. The function $p^{l}(\lambda)$ is analytic for $\lambda \in \mathbb{C} \setminus [-i\eta_2^l,i\eta_2^l]$. Since the imaginary part $\Im p^{l}(\lambda)$ is harmonic and vanishes on $\mathbb{R} \cup \Sigma^{l}$ and $\Im p_{\pm}^{l}(\lambda)>0,\lambda\in(0,i\eta_1^l],\Im p_{\pm}^{l}(\lambda)<0,\lambda\in[-i\eta_1^l,0)$, and since $p^{l}(\lambda) \sim \lambda$ as $\lambda \to \infty$, the maximum principle for harmonic functions yields
\begin{equation}\label{sign:pl}
    \begin{aligned}
        \Im p^{l}(\lambda) &> 0, \quad \lambda \in \mathbb{C}_+ \setminus \Sigma_1^{l}, \\
        \Im p^{l}(\lambda) &< 0, \quad \lambda \in \mathbb{C}_- \setminus \Sigma_2^{l}.
    \end{aligned}
\end{equation}

 Suppose that the initial data $u_0$ satisfies the conditions~\eqref{initial}.
By the Volterra integral equation~\eqref{eq:j-Integral}, the first column of 
$\bJ^l(x;\lambda)$ can be written in the following form:
\begin{equation}\label{eq:Jl1}
\bJ_1^l(x;\lambda)
=
\begin{pmatrix} 1 \\ 0 \end{pmatrix}
+
\int_{-\infty}^{x}
\begin{pmatrix}
1 & 0 \\
0 & e^{2i(x-y)p^l(\lambda)}
\end{pmatrix}
U^l(y;\lambda)\,\bJ_1^l(y;\lambda)\,dy .
\end{equation}

Define
\[
\mathcal{K}(x,y;\lambda)
:=
\begin{pmatrix}
1 & 0 \\
0 & e^{2i(x-y)p^l(\lambda)}
\end{pmatrix}
U^l(y;\lambda),
\]
which induces an integral operator acting on $2\times1$ vector-valued functions of $y$.
For convenience, we rewrite~\eqref{eq:Jl1} in the operator form
\begin{equation}\label{eq:J1l-operator}
    \bJ_1^l = \mathbf{e}_1 + \mathcal{K}\bJ_1^l,
\qquad
\mathbf{e}_1 := \begin{pmatrix} 1 \\ 0 \end{pmatrix}.
\end{equation}

Then by $\Im p^l(\lambda)=0,\ \lambda\in\R\cup\Sigma^l$ and \eqref{sign:pl}, it follows that for any $\lambda\in\overline{\mathbb{C}}_+ \cup \Sigma^l_2\setminus\{0,\pm i\eta_1^l,\pm i\eta_2^l\}$, the operator $\mathcal{K}$ is a bounded linear operator from $L^1{(-\infty,x)}$ to $L^1{(-\infty,x)}$, that is
\begin{equation}\label{ineq:K}
    \|\mathcal{K}\|\le\frac{c}{|\lambda|}\|u_0(x)-u_0^l(x)\|_{L^1(-\infty,x)},\quad \|\mathcal{K}^j\|\le\frac{1}{j!}\left(\frac{c}{|\lambda|}\|u_0(x)-u_0^l(x)\|_{L^1(-\infty,x)}\right)^j.
\end{equation}
Thus, the equation \eqref{eq:J1l-operator} admits a unique solution for any $x\in\R$ and $\lambda\in\overline{\mathbb{C}}_+ \cup \Sigma^l_2\setminus\{0,\pm i\eta_1^l,\pm i\eta_2^l\}$. Moreover, for any fixed $\lambda \in \overline{\mathbb{C}}_+ \cup \Sigma_2^l \setminus \{0,\pm i\eta_1^l,\pm i\eta_2^l\}$,
taking derivatives with respect to $x$ in~\eqref{eq:Jl1} shows that
$\bJ_1^l(x;\lambda)$ is $n_0$-times differentiable for all $x \in \mathbb{R}$.

\noindent{\bf The proof of 2.} 
Notice that for any $x \in \mathbb{R}$ and $\lambda \in \overline{\mathbb{C}}_+ \cup \Sigma_2^l \setminus \{0,\pm i\eta_1^l,\pm i\eta_2^l\}$, the kernel $\partial_\lambda \mathcal{K}(x,y;\lambda)$ satisfies the estimate
\[
\left\|\dot{\mathcal{K}}(x,y;\cdot)\right\|
\le \frac{c}{|\lambda|}
\bigl\|\, x\bigl(u_0(x)-u_0^l(x)\bigr)\bigr\|_{L^1(-\infty,x)} .
\]
where the dot denotes differentiation with respect to $\lambda$.
Thus for the initial data $u_0(x)$ satisfies the condition \eqref{initial}, it follows that $\mathbf{J}_1^{l}(x,\cdot)$ is analytic for $\lambda\in\bar{\C}_{+}\setminus[i\eta_2^l,0]$.
Furthermore, since $\Im p^l(\lambda)=0$ for $\lambda\in\Sigma^l\cup\R$, it follows that
\[
\left\|\frac {d^j}{d\lambda^j}\mathcal{K}(x,t;\cdot)\right\|
\le \frac{c}{|\lambda|}
\bigl\|\, x^j\bigl(u_0(x)-u_0^l(x)\bigr)\bigr\|_{L^1(-\infty,x)},\quad \lambda\in\Sigma^l\cup\R\setminus\{0,\pm i\eta_1^l,\pm i\eta_2^l\}.
\]
Consequently, for the initial data $u_0(x)$ satisfies the condition \eqref{initial}, $\mathbf{J}^{l}(x,\cdot)$ is $m_0$ differentiable for the left side and the right side $\lambda \in (\Sigma^l\cup\R) \setminus \{0,\pm i\eta_1^l,\pm i\eta_2^l\}$.

For $\lambda \in \Sigma^l$, the integral equation gives
\[
    \mathbf{J}^{l}(x;\lambda_+)
    = I + \int_{-\infty}^x 
        e^{-i(x-y) p^{l}(\lambda_+) \sigma_3} \,
        U^{l}(y;\lambda_+) \,
        \mathbf{J}^{l}(y;\lambda_+) e^{i(x-y) p^{l}(\lambda_+) \sigma_3}  dy.
\]
Using the relation $p^{l}(\lambda_+) = -p^{l}(\lambda_-)$ and the jump conditions for $O^{l}(x;\lambda)$ in RH Problem~\ref{RHP:genus1} (for $\lambda \in \Sigma_1^l$), we obtain
\[
    \mathbf{J}^{l}(x;\lambda_+)
    = I + \int_{-\infty}^x 
        e^{i(x-y) p^{l}(\lambda_-) \sigma_3} \,
        (i\sigma_1) U^{l}(y;\lambda_-) (-i\sigma_1) \,
        \mathbf{J}^{l}(y;\lambda_+) e^{-i(x-y) p^{l}(\lambda_-) \sigma_3}  dy.
\]
Multiplying both sides on the left by $i\sigma_1$ and on the right by $-i\sigma_1$ leads to
\[
    (-i\sigma_1) \mathbf{J}^{l}(x;\lambda_+) (i\sigma_1)
    = I + \int_{-\infty}^x 
        e^{-i(x-y) p^{l}(\lambda_-) \sigma_3} \,
        U^{l}(y;\lambda_-) \,
        \left[(-i\sigma_1) \mathbf{J}^{l}(y;\lambda_+) (i\sigma_1)\right] e^{i(x-y) p^{l}(\lambda_-) \sigma_3}  dy.
\]
This identity shows that $(-i\sigma_1) \mathbf{J}^{l}(x;\lambda_+) (i\sigma_1)$ also satisfies the integral equation \eqref{eq:j-Integral} for $\lambda \in \Sigma_1^l$, with $\lambda$ interpreted as the right boundary value.

We now examine the jump conditions in the gap. Note that
\[
p^{l}(\lambda_+) - p^{l}(\lambda_-) = -\Omega_1^l, 
\quad \text{and} \quad 
O^{l}(x;\lambda_+) = O^{l}(x;\lambda_-) e^{-i(x\Omega^l_1 - \Delta^l)\sigma_3}.
\]
From these, we derive
\[
\scalebox{0.95}{$
\begin{aligned}
\bJ_1^l(x;\lambda_+) 
&= \begin{pmatrix} 1 \\ 0 \end{pmatrix} 
+ \int_{-\infty}^{x} 
\begin{pmatrix} 1 & 0 \\ 0 & e^{2i(x-y)p^l(\lambda_+)} \end{pmatrix} 
U^l(y;\lambda_+) \mathbf{J}_1^l(y;\lambda_+)  dy \\
&= \begin{pmatrix} 1 \\ 0 \end{pmatrix} 
+ \int_{-\infty}^{x} 
\begin{pmatrix} 1 & 0 \\ 0 & e^{2i(x-y)p^l(\lambda_-) - 2i(x-y)\Omega_1^l} \end{pmatrix} 
 e^{i(y\Omega_1^l - \Delta^l)\sigma_3} U^l(y;\lambda_+) e^{-i(y\Omega_1^l - \Delta^l)\sigma_3} {\bJ}_1^l(y;\lambda_+)  dy,
\end{aligned}
$}\]
and consequently,
\[
\scalebox{0.925}{$
\begin{aligned}
&\begin{pmatrix} 1 & 0 \\ 0 & e^{-2i(x\Omega_1^l - \Delta^l)} \end{pmatrix} \bJ_1^l(x;\lambda_+) 
= \begin{pmatrix} 1 \\ 0 \end{pmatrix} 
+ \int_{-\infty}^{x} 
\begin{pmatrix} 1 & 0 \\ 0 & e^{2i(x-y)p^l(\lambda_-)} \end{pmatrix} 
U^l(y;\lambda_-)  \begin{pmatrix} 1 & 0 \\ 0 & e^{-2i(y\Omega_1^l - \Delta^l)} \end{pmatrix} {\bJ}_1^l(y;\lambda_-)  dy.
\end{aligned}
$}
\]
The remaining relations follow by similar arguments.

\noindent{\bf The proof of 5.} Regarding the symmetry properties, observe that
\[
p^l(-\lambda) = -p^l(\lambda), \quad \lambda \in \mathbb{C} \setminus [-i\eta_2^l, i\eta_2^l],
\]
and combining this with the symmetry relation in the RH Problem~\ref{RHP:genus1}, we obtain
\[
\partial_x \left( \sigma_1 \mathbf{J}^{l}(x;-\lambda) \sigma_1 \right) + i p^{l}(\lambda) \left[ \sigma_3, \sigma_1 \mathbf{J}^{l}(x;-\lambda) \sigma_1 \right] = U^{l}(x;\lambda) \left( \sigma_1 \mathbf{J}^{l}(x;-\lambda) \sigma_1 \right).
\]
By the uniqueness of the solution to \eqref{eq:J}, we conclude the symmetry relation for $\mathbf{J}^{l}(x;\lambda)$. Similarly, using the Schwarz symmetry $\overline{p(\bar{\lambda})} = p(\lambda)$, we also derive the symmetry relation \eqref{sym:Jconjugate}.

\noindent{\bf The proof of 6.} Recalling that $\det[O^l(x;\lambda)] \equiv 1$ for any $x \in \R$ and $\lambda \in \C$, it follows from~\eqref{Psi} that
\[
\det[\Psi_0^l(x;\lambda)] = 2i\lambda.
\]
Since $\Psi^l(x;\lambda)$ is a Jost solution of the Lax pair~\ref{Lax:matrix}, Liouville’s formula implies that 
$\det[\Psi^l(x;\lambda)]$ is constant with respect to $x$. 
As $x \to -\infty$, we have $\Psi^l(x;\lambda) \to \Psi_0^l(x;\lambda)$; therefore,
\[
\det[\Psi^l(x;\lambda)] = \det[\Psi_0^l(x;\lambda)] = 2i\lambda.
\]
By the definition in~\eqref{Jost}, it then immediately follows that 
\(
\det[\bJ^l(x;\lambda)] = 1.
\)
\end{proof}

\begin{lem}[Asymptotic properties of $\mathbf{J}^{l}(x;\lambda)$ and $\mathbf{J}^{r}(x;\lambda)$ as $\lambda\to\infty$ and $\lambda\to0$]\label{lem:inftyofJ}
Suppose that the initial data $u_0(x)$ satisfies the condition \eqref{initial} and assumption \ref{assumption}. 
\begin{enumerate}
    \item As $\lambda\to\infty$, the matrices $\mathbf{J}^{l}(x;\lambda)$ and $\mathbf{J}^{r}(x;\lambda)$ admit the asymptotic expansions
\begin{equation}\label{expansion:Jinfty}
\begin{aligned}
&\mathbf{J}^{l}(x;\lambda) = I + \sum_{k=1}^{3}\frac{J_k^l(x)}{\lambda^k} + \mathcal{O}\!\left(\lambda^{-4}\right), 
&& \text{with }\ J_1^l(x) = \frac{i}{2} \left( \int_{-\infty}^{x} \left[u_0(y) - u_0^{l}(y)\right]  dy \right) \begin{pmatrix} 1 & 0 \\ 0 & -1 \end{pmatrix}, \\
&\mathbf{J}^{r}(x;\lambda) = I + \sum_{k=1}^{3}\frac{J_{k}^r(x)}{\lambda^k} + \mathcal{O}\!\left(\lambda^{-4}\right),
&&\text{with }\ J_1^r(x) = \frac{i}{2} \left( \int_{+\infty}^{x} \left[u_0(y) - u_0^{r}(y)\right]  dy \right) \begin{pmatrix} 1 & 0 \\ 0 & -1 \end{pmatrix}.
\end{aligned}
\end{equation}
Here, $J_k^s$ satisfy the recursion equation \eqref{eq:J-recursion}.
\item  As $\lambda \to 0$, the functions $\mathbf{J}^{l}(x;\lambda)$ and $\mathbf{J}^{r}(x;\lambda)$ have at most a simple pole. In particular, the asymptotic behavior is given by
\begin{equation}\label{expansion:Jzero}
\begin{aligned}
\mathbf{J}^s(x;\lambda_{\pm}) &= \frac{1}{\lambda} e^{-i(x - x_0^s)p_{\pm}^s(0)\sigma_3} 
\begin{pmatrix}
    \alpha^s(x) & \beta^s(x) \\
    -\alpha^s(x) & -\beta^s(x)
\end{pmatrix} 
e^{i(x - x_0^s)p_{\pm}^s(0)\sigma_3} + \mathcal{O}(1),\ s\in\{r,l\}, \\
\end{aligned}
\end{equation}
where $\alpha^l(x)$ and $\beta^l(x)$ are continuous functions decaying to zero as $x \to -\infty$, and $\alpha^r(x)$ and $\beta^r(x)$ are continuous functions decaying to zero as $x \to +\infty$.
\end{enumerate}

\end{lem}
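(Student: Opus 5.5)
We prove Lemma~\ref{lem:inftyofJ} from the differential equation \eqref{eq:J} and the Volterra representations \eqref{eq:j-Integral} of Proposition~\ref{prop:laxequationforJ}, treating $s=l$ in detail; the case $s=r$ is identical after replacing $\int_{-\infty}^x$ by $\int_{+\infty}^x$.

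\textbf{Expansion at infinity.} First expand $U^l(x;\lambda)$ as $\lambda\to\infty$. By \eqref{expansion:O} and $\det O^l\equiv1$, we have $(O^l)^{-1}=I-O^{(1)}/\lambda+\dots$. Hence
\[
U^l(x;\lambda)=\frac{i(u_0-u_0^l)}{2\lambda}\begin{pmatrix}1&1\\-1&-1\end{pmatrix}+\sum_{k\ge2}\frac{U_k^l(x)}{\lambda^k},
\]
and every $U_k^l$ is bounded by $|u_0-u_0^l|$ times bounded periodic coefficients built from $m^l_{j,k}$ and $p_k^l$. Next insert the ansatz $\bJ^l=I+\sum_{k=1}^3 J_k^l\lambda^{-k}+R$ into \eqref{eq:J}. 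Using $p^l(\lambda)=\lambda+p_1^l/\lambda+\dots$, the commutator term $i\lambda[\sigma_3,J_{k+1}^l]$ at order $\lambda^{-k}$ determines the off-diagonal part of $J_{k+1}^l$ algebraically. The diagonal part of $J_{k+1}^l$ is then obtained by integrating $\partial_x(J_{k+1}^l)_{\rm diag}=(\text{known terms})_{\rm diag}$ from $-\infty$; this is the recursion \eqref{eq:J-recursion}. At order $\lambda^{-1}$ we have $[\sigma_3,J_1^l]=0$, so $J_1^l$ is diagonal. The $O(\lambda^{-1})$ diagonal part of $U^l$ is $\frac{i}{2}(u_0-u_0^l)\operatorname{diag}(1,-1)$, and integrating it gives the stated $J_1^l$.

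To justify the expansion, note that each integration costs one power of $|y|$-weight and derivatives of $u_0-u_0^l$. Condition \eqref{initial} with $m_0\ge8$ and $n_0\ge m_0+5$ makes the $J_k^l$, $k\le3$, well defined and bounded. The remainder $R$ satisfies a Volterra equation of the form \eqref{eq:J1l-operator} with a source of size $O(\lambda^{-4})$. The bound \eqref{ineq:K}, which is uniform for large $|\lambda|$ in the closed half-plane because $\Im p^l$ has the right sign by \eqref{sign:pl}, then gives $R=O(\lambda^{-4})$. This step is a careful but standard WKB/Neumann bookkeeping.

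\textbf{Behaviour at zero.} This is the main obstacle, because $O^l$ has a simple pole at $\lambda=0$ and the bound \eqref{ineq:K} degenerates like $|\lambda|^{-1}$. I would work at the level of $\Psi^l$ rather than $\bJ^l$. The Jost solution $\Psi^l$ solves the regular Volterra equation with kernel $\Psi_0^l(x)(\Psi_0^l(y))^{-1}\Delta\tilde U^l$. Since $\det\Psi_0^l=2i\lambda$, the factor $(\Psi_0^l)^{-1}$ produces at most $\lambda^{-1}$, while $\Psi_0^l$ stays bounded at $\lambda=0$. Consequently $\Psi^l(x;\lambda)$ has boundary values $\Psi^l(x;0_\pm)$ that are finite and continuous in $x$.

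We then invert \eqref{Jost}:
\[
\bJ^l=e^{-i(x-x_0^l)p^l\sigma_3}(\Psi_0^l)^{-1}\Psi^l\, e^{i(x-x_0^l)p^l\sigma_3}.
\]
Writing $(\Psi_0^l)^{-1}=\frac{1}{2i\lambda}\operatorname{adj}\Psi_0^l$, the pole coefficient is $\frac{1}{2i}\operatorname{adj}\Psi_0^l(x;0)\,\Psi^l(x;0)$. Its first and second rows are negatives of each other; this is the rank-one form $\begin{pmatrix}\alpha&\beta\\-\alpha&-\beta\end{pmatrix}$. The reason is that at $\lambda=0$ the two columns of $\Psi_0^l$ are proportional: this follows from $m_{1,\pm}(0)=m_{2,\pm}(0)e^{\pm i(\Omega+\Delta)}$ in \eqref{eq:m1m2} and from $p_\pm(0)=\mp\Omega_1/2$. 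The conjugation by $e^{\mp i(x-x_0^l)p^l_\pm(0)\sigma_3}$ is exactly the one in \eqref{expansion:Jzero}. Finally, $\alpha^l$ and $\beta^l$ are built from $\Psi^l(x;0)-\Psi_0^l(x;0)$ paired against $\operatorname{adj}\Psi_0^l(x;0)$, and $\operatorname{adj}\Psi_0^l(x;0)\Psi_0^l(x;0)=0$. Since $\Psi^l-\Psi_0^l\to0$ as $x\to-\infty$ by \eqref{Jost:asymp-x}, the functions $\alpha^l$ and $\beta^l$ are continuous and decay there.
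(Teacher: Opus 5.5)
The step that fails as written is your claim that $\Psi^l(x;\lambda)$ has finite boundary values at $\lambda=0$. Your reason is that $(\Psi_0^l)^{-1}$ costs at most $\lambda^{-1}$ while $\Psi_0^l$ is bounded. That only shows the Volterra kernel $\Psi_0^l(x)\Psi_0^l(y)^{-1}\Delta\tilde U^l(y)$ is $\mathcal O(\lambda^{-1})$. A Neumann series with such a kernel gives bounds like $\exp\bigl(C\|u_0-u_0^l\|_{L^1(\mathbb R_-)}/|\lambda|\bigr)$, which are useless at $\lambda=0$.

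The missing idea is that this pole cancels. The $\lambda^{-1}$ coefficient is $\frac{1}{2i}\Psi_0^l(x;0_\pm)\operatorname{adj}\Psi_0^l(y;0_\pm)$, and it vanishes: by \eqref{eq:m1m2}, together with $p_\pm(0)=\mp\Omega_1^l/2$, the two columns of $\Psi_0^l(\cdot\,;0_\pm)$ are equal, not merely proportional. Equivalently, $\Psi_0^l(x)\Psi_0^l(y)^{-1}$ is the transfer matrix of the background equation and is entire in $\lambda$. Because $\lambda=0$ is a band edge, this regular kernel still grows like $1+|x-y|$. So boundedness and continuity of $\Psi^l(x;0_\pm)$ need the first moment of $u_0-u_0^l$ on $\mathbb R_-$, and a Neumann estimate that accounts for this growth.

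Continuity alone also does not give the $\mathcal O(1)$ remainder in \eqref{expansion:Jzero}. For that you need $\operatorname{adj}\Psi_0^l(x;\lambda)\Psi^l(x;\lambda)=\operatorname{adj}\Psi_0^l(x;0_\pm)\Psi^l(x;0_\pm)+\mathcal O(\lambda)$, i.e. a bound on $\partial_\lambda\Psi^l$ near $0$. The differentiated kernel grows like $(1+|x-y|)^2$, so this needs a higher moment. That is the role of the paper's estimates on $\partial_\lambda\tilde U$ and $\partial_\lambda^2\tilde U$. Both facts hold under \eqref{initial} with $m_0\ge 8$, but they must be proved, not inferred from the size of $(\Psi_0^l)^{-1}$.

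Once this is repaired, your route for part~2 is equivalent to the paper's. The paper's $\mathcal X$ is exactly $T(\Psi_0^l)^{-1}\Psi^l$ with $T=\begin{pmatrix}1&1\\-i\lambda&i\lambda\end{pmatrix}$. The paper reads the pole off $T^{-1}$, with regularity of $\mathcal X$ coming from the $x$-independent kernel $\tilde U(y)$. You read it off $\operatorname{adj}\Psi_0^l(x;0_\pm)$. Your version makes the rank-one form more transparent, and it gives a clean proof that $\alpha^l,\beta^l$ decay, via $\operatorname{adj}\Psi_0^l\,\Psi_0^l=\det\Psi_0^l\,I=0$ at $\lambda=0$.

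Part~1 follows the paper, with one correction. The approximant $I+\sum_{k\le 3}J_k^l\lambda^{-k}$ leaves an uncancelled off-diagonal residual $-i[\sigma_3,J_4^l]\lambda^{-3}$ in \eqref{eq:J}, so the source is not $\mathcal O(\lambda^{-4})$. You must either carry $J_4^l$, as the paper does with $\bJ^s_{(4)}$, or integrate that oscillatory off-diagonal source by parts.
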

\begin{proof}
\noindent{\bf The proof of 1.} 
Consider the formal expansion of $\bJ^{s}(x;\lambda)$ together with the asymptotic expansions of
$p^{s}(\lambda)$ and $U^{s}(x;\lambda)$ as $\lambda\to\infty$:
\begin{equation*}
\bJ_{\mathrm{form}}^{s}
:= I + \sum_{k=1}^{\infty}\frac{J_k^{s}}{\lambda^{k}}, \qquad
p^{s}(\lambda)
:= \lambda + \sum_{k=1}^{\infty}\frac{p_k^{s}}{\lambda^{k}}, \qquad
U^{s}(x;\lambda)
:= \sum_{k=1}^{\infty}\frac{U_k^{s}}{\lambda^{k}} .
\end{equation*}
Substituting these expansions into~\eqref{eq:J} yields the following recursion relations:
\begin{equation}
\label{eq:J-recursion}
\begin{cases}
\begin{aligned}
\partial_x \bigl(J_k^{s}\bigr)^{(d)}
&= \left(\displaystyle\sum_{l=1}^{k} U_l^{s} J_{k-l}^{s}\right)^{(d)}, \\[1ex]
i\,[\sigma_3, J_k^{s}]
&= - \bigl(\partial_x J_{k-1}^{s}\bigr)^{(o)}
    - \displaystyle\sum_{l=1}^{k} p_l^{s}\,[\sigma_3, J_{k-l}^{s}]
    + \left(\displaystyle\sum_{l=1}^{k} U_l^{s} J_{k-l}^{s}\right)^{(o)},
\end{aligned}
\end{cases}
\end{equation}
where $(\cdot)^{(d)}$ and $(\cdot)^{(o)}$ denote the diagonal and off-diagonal parts of a matrix,
respectively, and $J_0^{s}=I$.

Define the truncated expansion
\[
\bJ^{s}_{(k)}:=I+\sum_{l=1}^{k}\frac{J^s_l}{\lambda^l}.
\]
A direct computation shows that
\begin{align*}
\partial_x\bJ^s_{(4)}+ip^{s}[\sigma_3,\bJ^s_{(4)}]-U^s\bJ^s_{(4)}
&=\frac{\partial_xJ^s_{4}}{\lambda^4}
 +\sum_{k=4}^{\infty}\frac{1}{\lambda^{k+1}}
 \sum_{l=1}^{4}p^s_{k+1-l}[\sigma_3,J^s_l] 
 -\sum_{k=4}^{\infty}\frac{1}{\lambda^{k+1}}
 \sum_{l=1}^{4}U_{k+1-l}J^s_l .
\end{align*}

By induction, one obtains the estimate
\(
|J_4^s|
\le
\sup_{0\le j\le 2}\bigl|\partial_x^j(u_0-u_0^s)\bigr|.
\)
Moreover, if the initial data satisfy condition~\eqref{initial} and $|\lambda|\gg1$,
then $J^s_{(4)}$ is invertible, with
\[
(\bJ^s_{(4)})^{-1}
=
\sum_{k=0}^{\infty}
\left(-\sum_{l=1}^4\frac{J_{l}^s}{\lambda^l}\right)^k .
\]

Define
\[
\Delta
:=
-\left(
\partial_x\bJ^s_{(4)}
+ip^{s}[\sigma_3,\bJ^s_{(4)}]
-U^s\bJ^s_{(4)}
\right)(\bJ^s_{(4)})^{-1}.
\]
Then, for $|\lambda|$ sufficiently large, $(\bJ^s_{(4)})^{-1}J^s$ satisfies
\begin{equation*}
\partial_x\!\left((\bJ^s_{(4)})^{-1}\bJ^s\right)
+ip^s[\sigma_3,(\bJ^s_{(4)})^{-1}\bJ^s]
=(\bJ^s_{(4)})^{-1}\Delta \bJ^s .
\end{equation*}
Equivalently, $\bJ^s$ solves the following integral equation:
\begin{equation}
\label{eq:J-integral-expansion}
\begin{aligned}
\bJ^s(x;\lambda)
&=\bJ_{(4)}^s(x;\lambda)
+\int_{\pm\infty}^x
\bJ_{(4)}^s(x;\lambda)
e^{-i(x-y)p^s(\lambda)\sigma_3}
(\bJ^s_{(4)}(y;\lambda))^{-1}
\Delta(y;\lambda) \bJ^s(y;\lambda)
e^{i(x-y)p^s(\lambda)\sigma_3}\,dy .
\end{aligned}
\end{equation}

Since
\(
|\Delta|
\le
\frac{\sup_{0\le j\le 3}|\partial_x^j(u_0-u_0^s)|}{|\lambda|^4},
\)
and thus by the inegral equation \eqref{eq:J-integral-expansion}, it follows that, as $\lambda\to\infty$,
\begin{equation*}
|\bJ^s(x;\lambda)-\bJ_{(3)}^s(x;\lambda)|
\le
\frac{
\sup_{0\le j\le 3}
\|\partial_x^j(u_0-u_0^s)\|_{L^1(x,\pm\infty)}
}{|\lambda|^4}.
\end{equation*}

\noindent{\bf The proof of 2.}   As $\lambda \to 0$, we consider specifically the case of $\bJ^l(x;\lambda_+)$; the analysis for other cases is similar. Define
\[
\mathcal{X}(x;\lambda) = \begin{pmatrix}
    1 & 1 \\
    -i \lambda & i \lambda
\end{pmatrix} e^{i(x - x_0^l)p^l(\lambda)\sigma_3} \bJ^l(x;\lambda) e^{-i(x - x_0^l)p^l(\lambda)\sigma_3}.
\]
Then, from the integral equation \eqref{eq:j-Integral}, it follows that
\[
\mathcal{X}(x;\lambda) =  \begin{pmatrix}
    1 & 1 \\
    -i \lambda & i \lambda
\end{pmatrix} + \int_{-\infty}^x \tilde{U}(y;\lambda) \mathcal{X}(y;\lambda)  dy,
\]
where
\[
\tilde{U}(y;\lambda) = \begin{pmatrix}
    1 & 1 \\
    -i \lambda & i \lambda
\end{pmatrix} e^{i(y - x_0^l)p(\lambda)\sigma_3} U(y;\lambda) e^{-i(y - x_0^l)p(\lambda)\sigma_3} \begin{pmatrix}
    1 & 1 \\
    -i \lambda & i \lambda
\end{pmatrix}^{-1}.
\]

Suppose that, as $\lambda \to 0$, the vector-valued solution of the RH problem
\eqref{RHP:g1-vector} and the phase function $p^l(\lambda)$ admit the following
expansions:
\[
\begin{aligned}
    m_1^l(\lambda_+)
    &= m_{1,+}^l(0)
       + \partial_\lambda m_{1,+}^l(0)\,\lambda
       + \mathcal{O}(\lambda^2), \\
    m_2^l(\lambda_+)
    &= m_{2,+}^l(0)
       + \partial_\lambda m_{2,+}^l(0)\,\lambda
       + \mathcal{O}(\lambda^2), \\
    p^l(\lambda_+)
    &= -\frac{\Omega_1^l}{2}
       + p_{1,+}^l(0)\,\lambda
       + \mathcal{O}(\lambda^2),
\end{aligned}
\]
where $m_j^l(x;\lambda)$, $j=1,2$, denote the first and second columns of the
solution to the RH problem \eqref{RHP:g1-vector} associated with the initial data
$u_0^l(x)$. Substituting these expansions into $\tilde{U}(y;\lambda)$ yields
\[
\tilde{U}(y;\lambda_+)
=
\begin{pmatrix}
    \mathcal{O}\!\left((u_0-u_0^l)y^2\right)
    & \mathcal{O}\!\left((u_0-u_0^l)y^2\right) \\
    \mathcal{O}\!\left(u_0-u_0^l\right)
    & \mathcal{O}\!\left((u_0-u_0^l)y\right)
\end{pmatrix}.
\]
Furthermore, as $\lambda\to0$, we also have
$$
\scalebox{0.87}{$
\partial_{\lambda}\tilde{U}(y;\lambda_+)
=
\begin{pmatrix}
    \mathcal{O}\!\left((u_0-u_0^l)y^4\right)
    & \mathcal{O}\!\left((u_0-u_0^l)y^4\right) \\
    \mathcal{O}\!\left((u_0-u_0^l)y\right)
    & \mathcal{O}\!\left((u_0-u_0^l)y^2\right)
\end{pmatrix},\ 
\partial_{\lambda}^2\tilde{U}(y;\lambda_+)
=
\begin{pmatrix}
    \mathcal{O}\!\left((u_0-u_0^l)y^5\right)
    & \mathcal{O}\!\left((u_0-u_0^l)y^5\right) \\
    \mathcal{O}\!\left((u_0-u_0^l)y^2\right)
    & \mathcal{O}\!\left((u_0-u_0^l)y^3\right)
\end{pmatrix}.
$}
$$

As a consequence, for initial data $u_0(x)$ satisfying the condition
\eqref{initial}, the matrix $\mathcal{X}(x;\lambda_+)$ is regular at $\lambda=0$ and admits the following expansion:
\begin{equation*}
    \mathcal{X}(x;\lambda_+)=\mathcal{X}(x;0_+)+\partial_{\lambda}\mathcal{X}(x;0_+)\lambda+\mathcal{O}(\lambda^2).
\end{equation*}
As $\lambda \to 0$, tracking back the transformation of $\bJ^l(x;\lambda)$, we have
\[
\begin{aligned}
\bJ^l(x;\lambda_+)
&=
e^{-i(x-x_0)p_+^l(0)\sigma_3}
\left[
\begin{pmatrix}
    1 & 1 \\
    -i\lambda & i\lambda
\end{pmatrix}^{-1}
\mathcal{X}(x;\lambda)
\right]
e^{i(x-x_0)p_+^l(0)\sigma_3} \\
&=
\frac{1}{\lambda}
e^{-i(x-x_0)p_+^l(0)\sigma_3}
\begin{pmatrix}
    \alpha^l(x) & \beta^l(x) \\
    -\alpha^l(x) & -\beta^l(x)
\end{pmatrix}
e^{i(x-x_0)p_+^l(0)\sigma_3}
+ \mathcal{O}(1),
\end{aligned}
\]
where $\alpha^l(x)$ and $\beta^l(x)$ are continuous functions that decay to zero as
$x \to -\infty$.

\end{proof}
\subsection*{The Jost solutions $\Phi^l(x;\lambda)$ and $\Phi^r(x;\lambda)$.}
We have introduced the Jost solutions $\Psi^l(x;\lambda)$ and $\Psi^r(x;\lambda)$ in \eqref{Jost}, which satisfy the Lax pair \eqref{Lax:matrix}. To construct the RH problem corresponding to the initial data \eqref{initial}, we now define the transformation
\[
\Phi^s(x;\lambda) := \begin{pmatrix}
1 & 1 \\
-i\lambda & i\lambda
\end{pmatrix} \Psi^s(x;\lambda),
\]
which converts the Lax pair \eqref{Lax:matrix} into the new form
\begin{equation}\label{Lax:matrix-new}
\partial_x \Phi^s = \left[
-i\lambda\sigma_3 + \frac{i u_0(x)}{2\lambda}
\begin{pmatrix}
1 & 1 \\
-1 & -1
\end{pmatrix}
\right] \Phi^s := \tilde{L} \Phi^s.
\end{equation}
Correspondingly, we define
\begin{equation}\label{Jost:Phi}
\begin{aligned}
\Phi^{s}(x;\lambda) &:= \begin{pmatrix}
1 & 1 \\
-i\lambda & i\lambda
\end{pmatrix}^{-1} \Psi^s(x;\lambda)
= O^{s}(x;\lambda) \mathbf{J}^{s}(x;\lambda) e^{-i(x - x^{s}_0)p^{s}(\lambda)\sigma_3}, \\
\end{aligned}
\end{equation}
and note that these functions satisfy the transformed Lax equation \eqref{Lax:matrix-new} with the initial datum $u_0(x)$ given in \eqref{initial}.
\begin{pro}\label{prop:Phi}
Suppose that $u_0 $ satisfies the condition \eqref{initial} and assumption \ref{assumption}. Then $\Phi^l(x;\lambda)$ and $\Phi^r(x;\lambda)$ are Jost solutions of the Lax pair \eqref{Lax:matrix-new} with initial data \eqref{initial} and possess the following properties:
\begin{enumerate}
    \item For each $x\in\mathbb{R}$, $\Phi^l(x;\lambda)$ is defined on
$(\overline{\mathbb{C}}_+\cup\Sigma_2^l)\cup(\overline{\mathbb{C}}_-\cup\Sigma_1^l)\setminus\{0,\pm i\eta_1^l,\pm i\eta_2^l\}$,
and is analytic in the interior of these regions. Moreover, $\Phi^l(x;\lambda)$ admits continuous boundary values on both sides of
$\Sigma_1^l\cup\Sigma_2^l$ away from $\{\pm i\eta_1^l,\pm i\eta_2^l\}$. Moreover, these boundary values are $m_0$-times continuously differentiable.

    \item For each $x\in\mathbb{R}$, $\Phi^r(x;\lambda)$ is defined on
$(\overline{\mathbb{C}}_-\cup\Sigma_1^r)\cup(\overline{\mathbb{C}}_+\cup\Sigma_2^r)\setminus\{0,\pm i\eta_1^r,\pm i\eta_2^r\}$,
and is analytic in the interior of these regions. Moreover, $\Phi^r(x;\lambda)$ admits continuous boundary values on both sides of
$\Sigma_1^r\cup\Sigma_2^r$ away from $\{\pm i\eta_1^r,\pm i\eta_2^r\}$ and these boundary values are $m_0$-times continuously differentiable.

In particular, the boundary values of $\Phi^s(x;\lambda)$ for $\lambda \in \Sigma^s$ satisfy
    \begin{equation}\label{jump:Phir}
        \begin{aligned}
            \Phi^s(x;\lambda_+) &= \Phi^s(x;\lambda_-)(-i\sigma_1), && \lambda \in \Sigma_1^s, \\
            \Phi^s(x;\lambda_+) &= \Phi^s(x;\lambda_-) i\sigma_1, && \lambda \in \Sigma_2^s.
        \end{aligned}
    \end{equation}
   \item The Jost solutions $\Phi^l(x;\lambda)$ and $\Phi^r(x;\lambda)$ satisfy the following symmetry conditions:
\begin{equation}\label{sym:Phi-pm}
\begin{aligned}
&\Phi^l(x;-\lambda) = \sigma_1 \Phi^l(x;\lambda) \sigma_1, 
&& \lambda \in (\overline{\mathbb{C}}_+, \overline{\mathbb{C}}_-) \setminus [-i\eta_2^l, i\eta_2^l], \\
&\Phi^r(x;-\lambda) = \sigma_1 \Phi^r(x;\lambda) \sigma_1, 
&& \lambda \in (\overline{\mathbb{C}}_-, \overline{\mathbb{C}}_+) \setminus [-i\eta_2^r, i\eta_2^r].
\end{aligned}
\end{equation}
Furthermore, the following complex-conjugation symmetries hold:
\begin{equation}\label{sym:Phi-conjuate}
\begin{aligned}
&\overline{\Phi^l(x,\bar{\lambda})} = \sigma_1 \Phi^l(x;\lambda) \sigma_1, 
&& \lambda \in (\overline{\mathbb{C}}_+, \overline{\mathbb{C}}_-) \setminus \{0\}, \\
&\overline{\Phi^r(x,\bar{\lambda})} = \sigma_1 \Phi^r(x;\lambda) \sigma_1, 
&& \lambda \in (\overline{\mathbb{C}}_-, \overline{\mathbb{C}}_+) \setminus \{0\}.
\end{aligned}
\end{equation}
\item For each $x \in \R$ and for every $\lambda$ belonging to the domains of $\Phi^l(x;\lambda)$ and $\Phi^r(x;\lambda)$, the determinants of $\Phi^l(x;\lambda)$ and $\Phi^r(x;\lambda)$ satisfy
\begin{equation}\label{eq:detPhi}
   \det\!\big[\Phi^s(x;\lambda)\big] = 1.
\end{equation}
\end{enumerate}
\end{pro}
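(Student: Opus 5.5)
The plan is to read every property of $\Phi^s$ off the factorization \eqref{Jost:Phi}, $\Phi^s(x;\lambda)=O^s(x;\lambda)\bJ^s(x;\lambda)e^{-i(x-x_0^s)p^s(\lambda)\sigma_3}$. Each of the three factors already has known analyticity, jump, symmetry and determinant properties: $O^s$ from RH problem~\ref{RHP:genus1}, $\bJ^s$ from Proposition~\ref{prop:J}, and $p^s$ from \eqref{integral:pq}.

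\textbf{Lax equation.} First I check that $\Phi^s$ solves \eqref{Lax:matrix-new}. By Lemma~\ref{lem:Lax-verify}, $\Psi_0^s$ solves the $x$-part of \eqref{Lax:matrix} with potential $u_0^s$. By Proposition~\ref{prop:laxequationforJ}, $\bJ^s$ solves \eqref{eq:J}, so $\Psi^s$ solves the $x$-part with potential $u_0$. Conjugating by the constant matrix $\begin{pmatrix}1&1\\-i\lambda&i\lambda\end{pmatrix}$ gives $\tilde L$. The Jost asymptotics \eqref{Jost:asymp-x} come from $\bJ^s\to I$ as $x\to\infty_s$, which is read off from the Volterra equation \eqref{eq:j-Integral} using $u_0-u_0^s\in L^1$ near $\infty_s$.

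\textbf{Items 1 and 2.} Write $\Phi^s_j=O^s\,\bJ^s_j\,e^{\mp i(x-x_0^s)p^s}$ column by column. The domains of analyticity follow from three facts. $O^s$ and $p^s$ are analytic off $[-i\eta_2^s,i\eta_2^s]$. The column $\bJ^s_j$ is analytic in the half-plane given by Proposition~\ref{prop:J}. The interval $(-i\eta_1^s,i\eta_1^s)$ carries no jump for $\Phi^s$ itself; the only exception is the apparent singularity at $0$, which is excluded.

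To see the last point, let $\lambda$ lie in the gap and take $s=l$, first column. The factor $O^l$ jumps by $e^{-i(x\Omega_1^l-\Delta^l)\sigma_3}$ on the right. The exponential satisfies $p_+-p_-=-\Omega_1^l$. The column $\bJ^l_1$ jumps by \eqref{jumps:Jl-gap}. I expect these three phase factors to cancel exactly, and this verification is the one computation I would carry out in full.

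On the bands I combine three jumps. $O^s$ jumps by $\mp i\sigma_1$. The column relation is $\bJ^s(\lambda_+)=\sigma_1\bJ^s(\lambda_-)\sigma_1$ from \eqref{jumps:Jl-band}. Finally $p^s_+=-p^s_-$, so $e^{-i(x-x_0^s)p_+\sigma_3}=\sigma_1e^{-i(x-x_0^s)p_-\sigma_3}\sigma_1$. Multiplying these gives
\[
\Phi^s_+=O^s_-(\mp i\sigma_1)\,\sigma_1\bJ^s_-\sigma_1\,\sigma_1 e^{-i(x-x_0^s)p_-\sigma_3}\sigma_1=\Phi^s_-(\mp i\sigma_1),
\]
which is \eqref{jump:Phir}. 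The sign convention needs care: $O$ jumps by $-i\sigma_1$ on $\Sigma_1$, and I will check this against the statement.

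The $m_0$-fold differentiability of the boundary values follows from item~2 of Proposition~\ref{prop:J}, together with smoothness of $O^s_\pm$ and $p^s_\pm$ on the open bands. Continuity up to the bands follows the same way.

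\textbf{Items 3 and 4.} Symmetry follows from $p^s(-\lambda)=-p^s(\lambda)$, the relation $O^s(-\lambda)=\sigma_1O^s\sigma_1$, and \eqref{sym:Jminus}. Since $\sigma_1e^{ia\sigma_3}\sigma_1=e^{-ia\sigma_3}$, these combine to $\Phi^s(-\lambda)=\sigma_1\Phi^s(\lambda)\sigma_1$. Conjugation symmetry is analogous, using $\overline{p(\bar\lambda)}=p(\lambda)$, the Schwarz symmetry of $O^s$, and \eqref{sym:Jconjugate}; the exponential then conjugates correctly. The determinant identity \eqref{eq:detPhi} is immediate: $\det O^s=1$, $\det\bJ^s=1$ by \eqref{eq:detJ}, and $\det e^{c\sigma_3}=1$.

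\textbf{Main obstacle.} I expect the main difficulty to be the bookkeeping of phase factors. This concerns the gap cancellation and the consistency of the $x_0^s$ versus $x_0^l$ normalization in \eqref{def:Psi0}, making sure $\Delta^s=x_0^s\Omega_1^s$ matches. There is also the point $\lambda=0$, where both $O^s$ and $\bJ^s$ have simple poles. Since $0$ is excluded from the stated domains, I only need to note this, optionally invoking Lemma~\ref{lem:inftyofJ} for boundedness.
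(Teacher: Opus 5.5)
Your proposal is correct and follows essentially the same route as the paper. Both arguments read every property off the factorization $\Phi^s=O^s\bJ^s e^{-i(x-x_0^s)p^s\sigma_3}$:
\begin{itemize}
\item the band jump comes from combining $\bJ^s(\lambda_+)=\sigma_1\bJ^s(\lambda_-)\sigma_1$ and $p^s_+=-p^s_-$ with the jump $\mp i\sigma_1$ of $O^s$;
\item on the gap, the phases $e^{-i(x\Omega_1^s-\Delta^s)}$ (from $O^s$ and $\bJ^s$) and $e^{i(x-x_0^s)\Omega_1^s}$ (from the exponential) cancel because $\Delta^s=x_0^s\Omega_1^s$;
\item the symmetries and $\det\Phi^s=1$ follow from those of $O^s$, $\bJ^s$ and $p^s$.
\end{itemize}
Your added checks of the Lax equation for $\Phi^s$ and of the $x_0^s$ normalization in $\Psi_0^s$ fill in points the paper only asserts.
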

\begin{proof}
Recall the properties of $\mathbf{J}^l(x;\lambda)$ from Proposition~\ref{prop:J}, which determine the domain, continuation, and analytic properties of $\Phi^l(x;\lambda)$.
Combining the jump relations of $\mathbf{J}^l(x;\lambda)$ in~\eqref{jumps:Jl-band} with the jump matrix from RH Problem~\ref{RHP:genus1}, we obtain, for $\lambda \in \Sigma_1^l$,
\begin{equation*}
\begin{aligned}
\Phi^{l}(x,\lambda_+) 
&= O^{l}(x,\lambda_+) \, \mathbf{J}^l(x,\lambda_+) \, e^{-i(x-x_0^l)p^l(\lambda_+)\sigma_3} \\
&= O^{l}(x,\lambda_-) \, (-i\sigma_1) \, \sigma_1 \mathbf{J}^l(x,\lambda_-)\sigma_1 \, e^{i(x-x_0^l)p^l(\lambda_-)\sigma_3} \\
&= \Phi^{l}(x,\lambda_-) \, (-i\sigma_1).
\end{aligned}
\end{equation*}
For $\lambda \in [0,i\eta_1^l]$, combining the jump condition~\eqref{jumps:Jl-gap} with RH Problem~\ref{RHP:genus1}, we have
\begin{equation*}
\begin{aligned}
\Phi_1^{l}(x,\lambda_+)
&= O^{l}(x,\lambda_-) \,
e^{-i(x\Omega_1^l-\Delta^l)\sigma_3}
\begin{pmatrix}
1 & 0 \\[2pt]
0 & e^{-2i(x\Omega_1^l-\Delta^l)}
\end{pmatrix}
\mathbf{J}_1^l(x,\lambda_-)
e^{-i(x-x_0^l)p^l(\lambda_-)+i(x\Omega_1^l-\Delta^l)} \\
&= \Phi_1^{l}(x,\lambda_-).
\end{aligned}
\end{equation*}
The remaining jump relations can be derived in an analogous manner. 

Furthermore, the symmetry conditions follow directly from the symmetry properties of $\mathbf{J}^l(x;\lambda)$ and $\mathbf{J}^r(x;\lambda)$ stated in~\eqref{sym:Jminus} and~\eqref{sym:Jconjugate}.
In addition, the determinant relations for $\Phi^l(x;\lambda)$ and $\Phi^r(x;\lambda)$ are obtained straightforwardly from those of $\bJ^l(x;\lambda)$ and $\bJ^r(x;\lambda)$, together with~\eqref{eq:detJ} and~\eqref{Jost:Phi}.
\end{proof}

\begin{lem}[Asymptotic properties of $\Phi^{l}(x;\lambda)$ and $\Phi^{r}(x;\lambda)$ as $\lambda\to\infty$]\label{lem:asymptotic}
Suppose that $u_0 $ satisfies the condition \eqref{initial} and assumption \ref{assumption}. 
Then, as $\lambda \to \infty$, the matrices $\Phi^{l}(x;\lambda)$ and $\Phi^{r}(x;\lambda)$ admit the asymptotic expansions
\begin{subequations}\label{expansion:Phi}
\begin{align}
\Phi^{s}(x;\lambda) 
&= \left[I + \sum_{j=1}^{3}\frac{\Gamma^s_j(x)}{\lambda^j} + \mathcal{O}\!\left(\lambda^{-4}\right)\right]e^{-i(x-x_0^s)\lambda\sigma_3},&&\lambda\to\infty.
\end{align}
\text{with}
\begin{align}
\Gamma^l_1(x)
&= \left(
m_{1,1}^l(x)
- i(x - x_0^l)p_1^l
+ \frac{i}{2} \int_{-\infty}^{x} \!\!\big[u_0(y) - u_0^{l}(y)\big]\, dy
\right)
\begin{pmatrix} 1 & 0 \\[2pt] 0 & -1 \end{pmatrix}, \\[4pt]
\Gamma^r_1(x)
&= \left(
m_{1,1}^r(x)
- i(x - x_0^r)p_1^r
+ \frac{i}{2} \int_{+\infty}^{x} \!\!\big[u_0(y) - u_0^{r}(y)\big]\, dy
\right)
\begin{pmatrix} 1 & 0 \\[2pt] 0 & -1 \end{pmatrix}.
\end{align}
\end{subequations}
Here $m_{1,1}^l(x)$ and $m_{1,1}^r(x)$ denote the coefficients in the expansions of 
$m_1^{l}(x;\lambda)$ and $m_1^{r}(x;\lambda)$ in \eqref{jump:g1-vector}, respectively, of the solutions to RH problem~\ref{RHP:g1-vector} associated with $u_0^l(x)$ and $u_0^r(x)$, respectively.
The quantities $p_1^l$ and $p_1^r$ are the coefficients in the large-$\lambda$ expansion of the quasimomenta $p^l(\lambda)$ and $p^r(\lambda)$, given by
\[
p_1^s
= \frac{(\eta_1^s)^2 - (\eta_2^s)^2}{2} 
+ (\eta_2^s)^2 \frac{E\!\left(m^s\right)}{K\!\left(m^s\right)},\ m^s=\frac{\eta_1^s}{\eta_2^s}.
\]
In particular, the step-like potential $u_0(x)$ can be expressed as
\begin{equation}\label{eq:u0}
   u_0(x) =-2i\,\partial_x [ \Gamma_1^s(x)]_{11}.
\end{equation}
where $[A]_{11}$ denotes the $(1,1)$ entry of the matrix~$A$.
\end{lem}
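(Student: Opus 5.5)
The plan is to combine the factorization \eqref{Jost:Phi}, $\Phi^s=O^s\bJ^s e^{-i(x-x_0^s)p^s(\lambda)\sigma_3}$, with the three large-$\lambda$ expansions already available. These are the expansion \eqref{expansion:O} of $O^s$ (up to the needed order), the expansion \eqref{expansion:Jinfty} of $\bJ^s$ from Lemma~\ref{lem:inftyofJ}, and the expansion $p^s(\lambda)=\lambda+\sum_{k\ge1}p_k^s\lambda^{-k}$. I would first rewrite
\[
e^{-i(x-x_0^s)p^s(\lambda)\sigma_3}=\Bigl(I-\tfrac{i(x-x_0^s)p_1^s}{\lambda}\sigma_3+\mathcal{O}(\lambda^{-2})\Bigr)e^{-i(x-x_0^s)\lambda\sigma_3}.
\]
The correction factor is diagonal, so it commutes with $e^{-i(x-x_0^s)\lambda\sigma_3}$. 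Its higher coefficients are polynomials in $(x-x_0^s)$ times combinations of the $p_k^s$. Since they are scalar-diagonal expansions, they can be moved to the left of the exponential without generating oscillating terms.

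Next, I would multiply out $O^s\bJ^s(I+\dots)$ and collect the powers of $\lambda^{-1}$ up to $\lambda^{-3}$. The remainder $\mathcal{O}(\lambda^{-4})$ is controlled by the remainder in Lemma~\ref{lem:inftyofJ} and the analyticity of $O^s$ near infinity. One point needs care here: Lemma~\ref{lem:inftyofJ} gives the expansion of $\bJ^s$ uniformly only in the closed half-planes where each column is defined. Still, the product is an honest matrix expansion, because the column-wise domains of $\Phi^s$ in Proposition~\ref{prop:Phi} coincide with those of $\bJ^s$.

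At order $\lambda^{-1}$, I read off
\[
\Gamma_1^s=O^{(1),s}+J_1^s-i(x-x_0^s)p_1^s\sigma_3 .
\]
Using $O^{(1)}=\operatorname{diag}(m_{1,1},m_{2,1})$ together with the symmetry $m(-\lambda)=m(\lambda)\sigma_1$, which forces $m_{2,1}=-m_{1,1}$, and the formula for $J_1^s$, this gives exactly the stated $\Gamma_1^l$ and $\Gamma_1^r$. The coefficients $\Gamma_2^s$ and $\Gamma_3^s$ are just the corresponding finite sums, so no explicit formula is required for them.

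For \eqref{eq:u0}, I would differentiate $[\Gamma_1^s]_{11}$ in $x$:
\[
-2i\partial_x[\Gamma_1^s]_{11}=-2i\partial_x m_{1,1}^s-2p_1^s+(u_0-u_0^s).
\]
By Corollary~\ref{cor:u} with $t=0$, $-2p_1^s-2i\partial_x m_{1,1}^s=u_0^s$, so the sum equals $u_0$. As a consistency check, one can also substitute the expansion into \eqref{Lax:matrix-new}: the $\lambda^0$ off-diagonal terms give $i[\sigma_3,\Gamma_1^s]$-type relations, and the diagonal term gives $\partial_x\Gamma_1^s$ proportional to $\tfrac{iu_0}{2}\sigma_3$, which is the same formula.

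The main obstacle I expect is justifying the $\mathcal{O}(\lambda^{-4})$ remainder uniformly up to the real axis and on the relevant closed half-planes, rather than only formally. Here I would rely on the estimate already established in the proof of Lemma~\ref{lem:inftyofJ}, namely that $|\bJ^s-\bJ^s_{(3)}|\lesssim|\lambda|^{-4}$, together with the fact that $O^s$ and $p^s$ are analytic at infinity with convergent Laurent series.
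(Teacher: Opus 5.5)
Your proposal is correct and follows essentially the same route as the paper. You expand $\Phi^s=O^s\bJ^s e^{-i(x-x_0^s)p^s\sigma_3}$ using \eqref{expansion:O}, \eqref{expansion:Jinfty} and the expansion of $p^s$, read off $\Gamma_1^s=O^{(1),s}+J_1^s-i(x-x_0^s)p_1^s\sigma_3$, and obtain \eqref{eq:u0} from Corollary~\ref{cor:u}. Two of your steps go beyond the paper: the symmetry $m(-\lambda)=m(\lambda)\sigma_1$ giving $m_{2,1}=-m_{1,1}$ fills in a step the paper leaves implicit, and your ``consistency check'' via \eqref{Lax:matrix-new} is actually an independent derivation of \eqref{eq:u0}.
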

\begin{proof}
Rewrite $\Phi^s(x;\lambda)$ as $\Phi^s(x;\lambda):=\Gamma^s(x;\lambda)e^{-i(x-x_0^s)p^s(\lambda)\sigma_3}$, and since $\Phi^s(x;\lambda)$ satisfies the Lax pair \eqref{Lax:matrix-new}, it follows that 
\begin{equation}\label{eq:gamma}
 \partial_x\Gamma^s(x;\lambda)-ip^s(\lambda)\Gamma^s(x;\lambda)\sigma_3=\left[
-i\lambda\sigma_3 + \frac{i u_0(x)}{2\lambda}
\begin{pmatrix}
1 & 1 \\
-1 & -1
\end{pmatrix}
\right]\Gamma^s(x;\lambda).  
\end{equation}

Suppose that $\Gamma^s(x;\lambda):=I+\sum_{k=1}^\infty\frac{\Gamma_k^s}{\lambda^k}$ and $p^s(\lambda)=\lambda+\frac{p_j^s}{\lambda^k}$, and plugging them into \eqref{eq:gamma}, it follows that
\begin{equation}\label{eq:gamma-exp}
\partial_x\Gamma_k^s+i[\sigma_3,\Gamma_{k+1}^s]-i\sum_{l=1}^kp_l ^s\Gamma_{k-l}^s= \frac{i u_0(x)}{2\lambda}
\begin{pmatrix}
1 & 1 \\
-1 & -1
\end{pmatrix}\Gamma_{k-1}^s,~k\ge1.
\end{equation}
Collecting the expansion of $\mathbf{J}^l(x;\lambda)$ from~\eqref{expansion:Jinfty} and that of $O^l(x;\lambda)$ from~\eqref{expansion:O}, together with the expansion of the quasi-momentum
\[
p^l(\lambda) = \lambda + \frac{p_1^l}{\lambda} + \mathcal{O}\!\left(\lambda^{-2}\right), \qquad
p_1^l = \frac{(\eta_1^l)^2 - (\eta_2^l)^2}{2} 
+ (\eta_2^l)^2 \frac{E\!\left(\frac{\eta_1^l}{\eta_2^l}\right)}{K\!\left(\frac{\eta_1^l}{\eta_2^l}\right)}.
\]
By the lemma \ref{lem:inftyofJ} and the definition of $\Phi^s(x;\lambda)$ in \eqref{Jost:Phi}, we obtain \eqref{expansion:Phi} and  as $\lambda \to \infty$,
\[
\Gamma_1^l(x)=(O^l)^{(1)} + \bJ^l_1 - i(x - x_0^l)p_1^l\sigma_3.
\]
Finally, by Corollary~\ref{cor:u}, the recovery formula for $u_0(x)$ given in~\eqref{eq:u0} follows.
\end{proof}
\begin{lem}[Asymptotic properties of $\Phi^{l}(x;\lambda)$ and $\Phi^{r}(x;\lambda)$ as $\lambda \to 0$]\label{lem:Phiat0}
Suppose that $u_0 $ satisfies the condition \eqref{initial}.
As $\lambda \to 0$, the Jost solutions $\Phi^l(x;\lambda)$ and $\Phi^r(x;\lambda)$ may have at most a simple pole.  
In particular, if we left-multiply $\Phi^{l}(x;\lambda)$ and $\Phi^{r}(x;\lambda)$ by the row vector $ \begin{bmatrix} 1 & 1 \end{bmatrix} $, then the resulting vectors
\[
\phi^s(x;\lambda) := \begin{bmatrix} 1 & 1 \end{bmatrix}  \Phi^s(x;\lambda), 
\]
are regular at $\lambda = 0$.
\end{lem}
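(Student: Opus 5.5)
The cleanest route avoids expanding $O^s$ and $\mathbf J^s$ separately. Instead, I would go back to the Jost solution $\Psi^s$ of the original Lax pair \eqref{Lax:matrix}. By \eqref{Jost:Phi}, $\Phi^s=\begin{pmatrix}1&1\\-i\lambda&i\lambda\end{pmatrix}^{-1}\Psi^s$. Since
\[
\begin{pmatrix}1&1\\-i\lambda&i\lambda\end{pmatrix}^{-1}=\frac{1}{2}\begin{pmatrix}1&\frac{i}{\lambda}\\ 1&-\frac{i}{\lambda}\end{pmatrix},
\]
every singular term of $\Phi^s$ at $\lambda=0$ comes from the factor $\frac{i}{\lambda}$ in the second column of this inverse. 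That column has the form $\frac{1}{\lambda}(1,-1)^T$ times a scalar. So the simple-pole part of $\Phi^s$ is of the form $\frac{1}{\lambda}\binom{1}{-1}(\cdots)$, which is exactly the structure of the residue recorded in \eqref{expansion:Jzero}. Left-multiplying by $[1\ 1]$ gives $[1\ 1]\begin{pmatrix}1&1\\-i\lambda&i\lambda\end{pmatrix}^{-1}=[1\ 0]$, and hence
\[
\phi^s(x;\lambda)=[1\ 0]\,\Psi^s(x;\lambda),
\]
the first row of $\Psi^s$. So the claim reduces to showing that the first row of $\Psi^s$ is regular at $\lambda=0$, and that $\Psi^s$ itself is regular there.

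To prove regularity of $\Psi^s$ at $0$, I would use the Volterra equation for $\Psi^s$ stated in Section~2:
\[
\Psi^s=\Psi_0^s+\int_{\infty_s}^x\Psi_0^s(x)\Psi_0^s(y)^{-1}\Delta\tilde U^s(y)\Psi^s(y)\,dy .
\]
The potential term $\Delta\tilde U^s$ does not depend on $\lambda$. The background solution $\Psi_0^s=\begin{pmatrix}1&1\\-i\lambda&i\lambda\end{pmatrix}O^s e^{-i(x-x_0^s)p^s\sigma_3}$ is regular at $\lambda=0$: the $\frac{1}{\lambda}$ pole of $O^s$ is cancelled by the prefactor. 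This cancellation is the same computation as in the proof of Lemma~\ref{lem:Lax-verify}, using \eqref{eq:m1m2}–\eqref{eq:m1m2partial}. Concretely, $\Psi_0^s$ is the Baker–Akhiezer matrix $\begin{pmatrix}\varphi\\ \partial_x\varphi\end{pmatrix}$, and $\varphi=m\,e^{-i(\cdots)}$ is bounded near $0$ with boundary values $p_\pm^s(0)=\mp\Omega_1^s/2$.

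The kernel is harder, because $\det\Psi_0^s=2i\lambda$ means $(\Psi_0^s)^{-1}$ carries a factor $\frac{1}{\lambda}$. I expect this to be the main obstacle. Two routes seem available:
\begin{itemize}[leftmargin=*]
\item Write $\Psi_0^s(x)\Psi_0^s(y)^{-1}$ as the transfer matrix $T(x,y;\lambda)$ of the background ODE. This matrix is entire in $\lambda$ for fixed $x,y$, since it solves a linear ODE with coefficients polynomial in $\lambda$. The danger is that its growth in $|x-y|$ near $\lambda=0$ may be linear, because the two Bloch solutions coalesce at the band edge $0$.
\item Alternatively, use the bound in the proof of Lemma~\ref{lem:inftyofJ}, part~2. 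There the kernel $\tilde U(y;\lambda_+)$ and its $\lambda$-derivatives were shown to be $\mathcal O((u_0-u_0^l)y^k)$ with $k\le5$.
\end{itemize}
In either case, the moment condition \eqref{initial} with $m_0\ge8$ makes the kernel integrable uniformly for $\lambda$ near $0$. This is precisely how regularity of the matrix $\mathcal X(x;\lambda_+)$ was established in that proof. In fact $\mathcal X$ coincides with $\Psi^s$ up to right multiplication by the bounded diagonal factor $e^{\mp i x_0^s p^s\sigma_3}$, and $e^{i x_0^s p^s\sigma_3}$ is bounded near $0$. So I would quote that result directly: $\mathcal X(x;\lambda_\pm)=\mathcal X(x;0_\pm)+\mathcal O(\lambda)$, where the boundary values $\pm$ are taken along the gap through $0$.

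The conclusion then follows in three steps.
\begin{enumerate}
\item Write $\Phi^s=\begin{pmatrix}1&1\\-i\lambda&i\lambda\end{pmatrix}^{-1}\mathcal X\,e^{-i(x-x_0^s)p^s\sigma_3}$, up to the bounded factor just mentioned.
\item The only source of singularity is the explicit $\frac{1}{\lambda}$ of the inverse prefactor, which yields at most a simple pole. Its residue is $\frac{i}{2}\binom{1}{-1}\mathcal X_{2\cdot}(x;0)e^{\cdots}$, consistent with the form in \eqref{expansion:Jzero}.
\item Since $[1\ 1]\binom{1}{-1}=0$, the pole is annihilated, and $\phi^s=[1\ 0]\,\mathcal X\,e^{-i(x-x_0^s)p^s\sigma_3}$ is bounded as $\lambda\to0$ from each side.
\end{enumerate}

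Finally, regularity means more than boundedness, since one must also check that the limits agree across the gap. For $\Phi_1^l$ on $(0,i\eta_1^l)$, the jump computation in Proposition~\ref{prop:Phi} shows that $\phi^s$ has no jump there. Together with continuity of the boundary values, this makes $0$ a removable point for $\phi^s$.
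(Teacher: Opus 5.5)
\textbf{Comparison with the paper.} Your reduction is correct: with $T=\begin{pmatrix}1&1\\-i\lambda&i\lambda\end{pmatrix}$ one has $[1\ 1]T^{-1}=[1\ 0]$, so $\phi^s=[1\ 0]\Psi^s$. This organizes the argument differently from the paper. The paper stays with $\Phi^s=O^s\mathbf{J}^se^{-i(x-x_0^s)p^s\sigma_3}$. It uses $[1\ 1]O^s=m^s$ and \eqref{eq:m1m2} to show that $[1\ 1]O^s e^{-i(x-x_0^s)p^s_\pm(0)\sigma_3}$ is a scalar multiple of $[1\ 1]$. That row then annihilates the residue $e^{-i(x-x_0^s)p^s_\pm(0)\sigma_3}\binom{1}{-1}(\alpha^s,\beta^s)\,e^{i(x-x_0^s)p^s_\pm(0)\sigma_3}$ of $\mathbf{J}^s$ in \eqref{expansion:Jzero}. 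Your version puts all singular behaviour into the explicit factor $T^{-1}$. Once $\Psi^s$ is known to be bounded, you therefore get both claims at once. This includes the ``at most a simple pole'' statement, which the paper's proof never addresses: a product $O^s\mathbf{J}^s$ of two simply singular factors could a priori have a $\lambda^{-2}$ term.

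\textbf{The gap.} The problem is how you obtain boundedness of $\Psi^s$ near $0$, which in your formulation is the whole content of the lemma. The identification you quote is false. Comparing the definition of $\mathcal{X}$ in the proof of Lemma~\ref{lem:inftyofJ} with \eqref{Jost} gives
\[
\Psi^s=\Psi_0^s\,T^{-1}\,\mathcal{X},
\]
so $\mathcal{X}$ and $\Psi^s$ differ by a \emph{left} factor, not a right diagonal one. Moreover,
$\Psi_0^sT^{-1}=\bigl(\tfrac12(\Psi^s_{0,1}+\Psi^s_{0,2}),\ \tfrac{i}{2\lambda}(\Psi^s_{0,1}-\Psi^s_{0,2})\bigr)$ carries an explicit $\lambda^{-1}$. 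This factor is bounded at $0$ only because $\Psi_0^s$ is analytic there and its two Bloch columns coincide at $\lambda=0$. That coincidence is \eqref{eq:m1m2} together with $\Delta^s=x_0^s\Omega_1^s$.

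With that step inserted, your argument closes. Equivalently, you can read boundedness of $\Psi^s=\Psi_0^s e^{i(x-x_0^s)p^s\sigma_3}\mathbf{J}^se^{-i(x-x_0^s)p^s\sigma_3}$ off \eqref{expansion:Jzero}, using $\Psi_0^s(x;0)\binom{1}{-1}=0$. Note that the cancellation the paper performs has not been avoided, only moved into the regularity of $\Psi^s$. As written, it is hidden behind an incorrect identity.

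Your route (a) does not fill the hole on its own either. The matrix Volterra equation for $\Psi^s$ is meaningful only where both columns exist, i.e.\ $\lambda\in\mathbb{R}\cup\Sigma^s$. For $\lambda$ off the real axis near $0$ you must use the column equations with the exponentially normalized kernel. You would then need to prove a bound $C(1+|x-y|)$ on that kernel, uniform in $\lambda$. That is the actual band-edge estimate, and you only assert it.
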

\begin{proof}
Notice that, as $\lambda \to 0$,
\[
\begin{bmatrix} 1 & 1 \end{bmatrix}  O^l(x;\lambda_{\pm}) e^{-i(x-x_0^l)p^l_{\pm}(0)}
= \begin{pmatrix} m^l_{1,\pm}(0) & m^l_{2,\pm}(0) \end{pmatrix} e^{\pm (x\Omega_1 - \Delta)\sigma_3 / 2}.
\]
Recalling the relationship~\eqref{eq:m1m2}, it follows that
\[
\begin{bmatrix} 1 & 1 \end{bmatrix}  O^l(x;\lambda_{\pm}) e^{-i(x-x_0^l)p^l_{\pm}(0)}
= m_{2,\pm}(0) \, e^{\mp (x\Omega_1 - \Delta)/2} \begin{bmatrix} 1 & 1 \end{bmatrix} .
\]

Substituting the above equation into 
\(\begin{bmatrix} 1 & 1 \end{bmatrix}  \Phi^l(x;\lambda_{\pm})\) in~\eqref{Jost:Phi} 
and combining it with the asymptotic expansion of $\mathbf{J}^r(x;\lambda)$
in~\eqref{expansion:Jzero}, we conclude that the vector $\phi^l(x;\lambda)$ is regular at $\lambda = 0$.  
The proof for $\phi^r(x;\lambda)$ follows by the same argument.
\end{proof}

\begin{lem}
[Local behaviour of $\Phi^{l}(x;\lambda)$ and $\Phi^{r}(x;\lambda)$ at the endpoints]
\label{lem:Phi-endpoints}
Suppose that the initial datum $u_0$ satisfies the condition~\eqref{initial}.
As $\lambda \to \pm i\eta_j^l$, $j=1,2$, the Jost solutions $\Phi^{l}(x;\lambda)$ may exhibit at most a fourth--root singularity.
Similarly, as $\lambda \to \pm i\eta_j^r$, $j=1,2$, the Jost solutions $\Phi^{r}(x;\lambda)$ may exhibit at most a fourth--root singularity.
\end{lem}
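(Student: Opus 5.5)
We will use the factorization \eqref{Jost:Phi}, $\Phi^s(x;\lambda)=O^s(x;\lambda)\,\mathbf{J}^s(x;\lambda)\,e^{-i(x-x_0^s)p^s(\lambda)\sigma_3}$, and bound each of the three factors near an endpoint $\lambda_*\in\{\pm i\eta_1^s,\pm i\eta_2^s\}$. We treat $s=l$ and $\lambda_*=i\eta_j^l$; the other endpoints follow from the symmetries \eqref{sym:Phi-pm}, \eqref{sym:Phi-conjuate} or by the same argument.

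\emph{The exponential factor.} The quasimomentum $p^l(\lambda)=\int_{i\eta_2^l}^\lambda dp^l$ has an integrand with only an inverse square-root singularity $(\lambda-\lambda_*)^{-1/2}$ at $\lambda_*$. Hence $p^l$ is bounded near $\lambda_*$ and behaves like $p^l(\lambda_*)+c(\lambda-\lambda_*)^{1/2}$, and $e^{\mp i(x-x_0^l)p^l\sigma_3}$ stays bounded for fixed $x$.

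\emph{The background factor.} From \eqref{O:genus1} and \eqref{solution:m}, the entries of $O^l$ are built from $m_{1,2}$, $\partial_x m_{1,2}$ and $p/\lambda$. In \eqref{solution:m}, the only singular factor is $\gamma(\lambda)=((\lambda^2+\eta_1^2)/(\lambda^2+\eta_2^2))^{1/4}$, which gives an $(\lambda-i\eta_2^l)^{-1/4}$ blow-up at $i\eta_2$ and vanishes at $i\eta_1$. The theta quotients are bounded, because $J(\lambda)$ is continuous up to the branch points and the denominators $\vartheta_3(\pm 2J-\tfrac12;2\tau)$ do not vanish there. At $i\eta_2$, $2J-\tfrac12=-\tfrac12$, and the zero of $\vartheta_3$ sits at $\tfrac12+\tau$, not at $\tfrac12$. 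We will check this explicitly from the stated values $J_+(i\eta_1)=-\tau/2$ and $J(i\eta_2)=0$. Differentiating in $x$ acts only on $\Omega$, so it does not worsen the singularity. Hence $O^l=\mathcal{O}(|\lambda-\lambda_*|^{-1/4})$, and $(O^l)^{-1}$ obeys the same bound since $\det O^l=1$.

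\emph{The normalized eigenfunction, which is the main step.} We need $\mathbf{J}^l$ bounded near $\lambda_*$. The Volterra equation \eqref{eq:j-Integral} has kernel $U^l=(O^l)^{-1}\bigl[\tfrac{i(u_0-u_0^l)}{2\lambda}(\cdots)\bigr]O^l$, so the crude bound $|U^l|\lesssim|\lambda-\lambda_*|^{-1/2}|u_0-u_0^l|$ would only give $\mathbf{J}^l=\mathcal{O}(\exp(C|\lambda-\lambda_*|^{-1/2}))$, which is useless. To avoid this, we will not iterate with $\mathbf{J}^l$. Instead we will work directly with $\Phi^l$, or equivalently with the $\Psi^l$-integral equation, $\Psi^l(x)=\Psi_0^l(x)+\int_{-\infty}^x\Psi_0^l(x)\Psi_0^l(y)^{-1}\Delta\tilde U^l(y)\Psi^l(y)\,dy$. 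The kernel $\Psi_0^l(x)\Psi_0^l(y)^{-1}$ is the transfer matrix of the periodic Schrödinger equation and is entire in $\lambda$, since the singular factors of $O^l$ cancel in the product. It is bounded for $\lambda$ on the bands and their closures, and it grows at most linearly in $|x-y|$ at band edges, where Floquet multipliers coincide. The moment condition \eqref{initial} with $m_0\ge 8$ absorbs this linear growth. Writing $\Psi^l=\Psi_0^l(I+\cdots)$ column by column, the standard Volterra iteration then shows that $\Psi^l(x;\lambda)\Psi_0^l(x;\lambda)^{-1}$ is bounded uniformly near $\lambda_*$, within the half-plane where the respective column is defined.

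Combining the three steps, $\Phi^l=(\text{bounded})\cdot\Phi_0^l$, where $\Phi_0^l=O^le^{-i(x-x_0^l)p^l\sigma_3}$ is $\mathcal{O}(|\lambda-\lambda_*|^{-1/4})$. This gives at most a fourth-root singularity, and the argument for $\Phi^r$ is identical. The delicate point is the uniform control of the transfer matrix at the band edges; we will get it from the explicit theta representation, or from the Floquet description in Appendix~\ref{appendix:Bloch spectrum}.
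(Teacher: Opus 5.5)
Your main step is the paper's proof. The paper also bypasses $\mathbf{J}^l$ and uses the Volterra equation for $\Psi^l$ with kernel $\Psi_0^l(x;\lambda)\Psi_0^l(y;\lambda)^{-1}\Delta\tilde U(y)$. It notes that this kernel is regular at the branch points with at most linear growth, absorbed by the first moment of $u_0-u_0^l$. It then concludes that $\Psi^l$, hence $\Phi^l$, inherits the $|\lambda-\lambda_*|^{-1/4}$ behaviour of $\Psi_0^l$. Identifying the kernel with the transfer matrix, which is entire in $\lambda$, is a cleaner justification than the paper's.

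What is wrong as written is your local analysis of $O^l$ at $\pm i\eta_1$. The denominators $\vartheta_3(\pm 2J-\tfrac12;2\tau)$ do vanish there. The zero set of $\vartheta_3(\cdot\,;2\tau)$ is $\tfrac12+\tau+\mathbb{Z}+2\tau\mathbb{Z}$, and $J_+(i\eta_1)=-\tau/2$ gives $2J-\tfrac12=-\tau-\tfrac12=\tfrac12+\tau-1-2\tau$ and $-2J-\tfrac12=\tau-\tfrac12=\tfrac12+\tau-1$. The same happens at $-i\eta_1$; only at $\pm i\eta_2$ are the theta quotients bounded, so the check you announce fails. Your picture also contradicts $\det O^l=1$, which you invoke in the next sentence: bounded quotients times $\gamma\to0$ would make every entry of $O^l$ vanish at $i\eta_1$. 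The correct mechanism is $J(\lambda)-J(i\eta_1)\sim c\,(\lambda-i\eta_1)^{1/2}$. Hence each denominator vanishes like $(\lambda-i\eta_1)^{1/2}$, and together with $\gamma\sim(\lambda-i\eta_1)^{1/4}$ this gives, for generic $x$, exactly a $(\lambda-i\eta_1)^{-1/4}$ blow-up. So your bound $O^l=\mathcal{O}(|\lambda-\lambda_*|^{-1/4})$ is true, but at $\pm i\eta_1$ it must be justified this way; the paper only writes out the expansion at $i\eta_2$.

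There are two smaller points.

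First, the announced target ``$\mathbf{J}^l$ bounded near $\lambda_*$'' is neither needed nor true in general. Up to exponential factors, $\mathbf{J}^l$ is the conjugate of $W:=\Psi^l(\Psi_0^l)^{-1}$ by $\Psi_0^l$, so it can blow up like $|\lambda-\lambda_*|^{-1/2}$. What you actually prove and use is boundedness of the left factor $W$.

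Second, the bound $|\Psi_0^l(x)\Psi_0^l(y)^{-1}|\le C(1+|x-y|)$ holds on the band. Near $i\eta_j^l$ the first column $\Psi^l_1$ is also needed off the band, where this matrix grows like $e^{(x-y)\Im p^l}$. There you should iterate for $\Psi^l_1(x)e^{i(x-x_0^l)p^l}$ and prove a uniform linear bound for the normalized kernel $\Psi_0^l(x)\Psi_0^l(y)^{-1}e^{i(x-y)p^l}$. This follows from the monodromy (Floquet) representation. The paper is equally terse on this point.
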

\begin{proof}
    By recalling the definitions of $\Psi^l(x;\lambda)$ in~\eqref{Jost} and $\Psi_0^l(x;\lambda)$ in~\eqref{def:Psi0}, we obtain the following integral equation for $\Psi^l(x;\lambda)$:
\begin{equation}\label{eq:Phil}
    \Psi^l(x;\lambda)=\Psi_0^l(x;\lambda)+\int_{-\infty}^x\Psi_0^l(x;\lambda)(\Psi_0^l(y;\lambda))^{-1}\Delta\tilde U(y)\Psi^l(y;\lambda)dy,
\end{equation}
where $\Delta \tilde{U}(x):=\begin{pmatrix}
    0&0\\
    u_0^l(x)-u_0(x)&0
\end{pmatrix}$. 
Since $\Psi_0^l(x;\lambda)$ has only constant jump conditions on $\Sigma^l$, 
$\Psi_0^l(x;\lambda)\bigl(\Psi_0^l(y;\lambda)\bigr)^{-1}$ is continuous along $\Sigma^l$ and has at most square--root singularities at the endpoints $\pm i\eta_j^l$, $j=1,2$. 
In particular, it can be continuously extended to each endpoint.

In particular, as $\lambda\to i\eta_2$, we have the following expansion of $m^l(x;\lambda)$ in \eqref{solution:m}
\begin{equation}\label{eq:expansion of eta2}
    \begin{aligned}
    &m_1^l(x;\lambda)=(\lambda-i\eta_2^l)^{-\frac{1}{4}}(\sum_{k=0}^{\infty}\gamma_k(\lambda-i\eta_2^l)^k)\left(\sum_{j=0}^{\infty}S^{(1)}_j(\lambda-i\eta_2^l)^j+\sqrt{\lambda-i\eta_2^l}\sum_{j=0}^{\infty}S^{(2)}_j(\lambda-i\eta_2^l)^j\right),\\
    &m_2^l(x;\lambda)=(\lambda-i\eta_2^l)^{-\frac{1}{4}}(\sum_{k=0}^{\infty}\gamma_k(\lambda-i\eta_2^l)^k)\left(\sum_{j=0}^{\infty}S^{(1)}_j(\lambda-i\eta_2^l)^j-\sqrt{\lambda-i\eta_2^l}\sum_{j=0}^{\infty}S^{(2)}_j(\lambda-i\eta_2^l)^j\right),\\
\end{aligned}
\end{equation}
where $\gamma_k$ is the expansion of $\gamma^l(\lambda)$ and $S^{(1)}_j,S^{(2)}_j$ are the coefficients of the Puiseux expansion of the $\vartheta_3$ in \eqref{solution:m}. On the other hand, the quasi-momentum integral $p^l(\lambda)$ has the following expansion:
$$
p^l(\lambda)=2\sqrt{\lambda-i\eta_2^l}\sum_{k=0}^{\infty}p_k(\lambda-i\eta_2^l).
$$

Substituting the above expansion into the kernel, we obtain
\[
\Psi_0^l(x;\lambda)\bigl(\Psi_0^l(y;\lambda)\bigr)^{-1} \,\Delta \tilde U(y)
= \mathcal{O}\Bigl((1+|x|+|y|)\,|u_0-u_0^l|\Bigr)
+ \mathcal{O}\Bigl(\sqrt{\lambda-i\eta_2^l}\Bigr).
\]

Since the initial datum $u_0(x)$ satisfies the condition~\eqref{initial}, the Neumann series then implies
\begin{equation*}
    |\Psi^l(x;\lambda)| \le 
    |\Psi_0^l(x;\lambda)|\,
    \exp\Bigl(C (1+|x|)\, \sup_{0\le j\le1}\|x^j(u_0-u_0^l)\|_{L^1(-\infty,x)}\Bigr).
\end{equation*}

Hence, $\Psi^l(x;\lambda)$ exhibits the same local behaviour as $\Psi_0^l(x;\lambda)$. 
Since $\Phi^l(x;\lambda)$ is obtained from $\Psi^l(x;\lambda)$ via a linear transformation, it inherits the same local behaviour as well. The analysis for the other endpoints is entirely analogous. 
\end{proof}

\subsection*{Scattering data}
Notice that $\Phi^l_j(x;\lambda)$ and $\Phi^r_j(x;\lambda)$ ($j=1,2$) can be regarded as eigenvectors of equation~\eqref{Lax:matrix-new}, subject to different boundary conditions (\ref{expansion:Phi}). 
By Proposition~\ref{prop:Phi}, there exist four eigenvectors when $\lambda \in \mathbb{R} \cup (\Sigma^{l} \cap \Sigma^{r})$, 
three eigenvectors when $\lambda \in \Sigma^{r} \setminus \Sigma^{l}$ or $\lambda \in \Sigma^{l} \setminus \Sigma^{r}$, 
and generically two eigenvectors elsewhere. 
Assuming that $u_0 - u_0^{l} \in L^{1}(\mathbb{R}_{-})$ and $u_0 - u_0^{r} \in L^{1}(\mathbb{R}_{+})$, 
we obtain the following scattering relation:

\begin{itemize}
    \item For $\lambda \in \mathbb{R} \setminus \{0\}$, both $\Phi^r(x;\lambda)$ and $\Phi^l(x;\lambda)$ exist. 
    Hence, there exists a scattering matrix $\mathbf{S}(\lambda)$ such that
    \begin{equation}\label{eq:Phirl0}
        \Phi^l(x;\lambda) = \Phi^r(x;\lambda) \mathbf{S}(\lambda),\quad \lambda\in\R\setminus\{0\}.
    \end{equation}
    and by the symmetry relation \eqref{sym:Jconjugate}, the scattering matrix takes the form
    \begin{equation*}
        \mathbf{S}(\lambda) = \begin{pmatrix}
             a(\lambda) & b^{*}(\lambda) \\
             b(\lambda) & a^{*}(\lambda)
        \end{pmatrix},\quad \lambda\in\R\setminus\{0\}.
    \end{equation*}
    where $f^{*}(\lambda) := \overline{f(\bar{\lambda})}$ denotes the Schwartz conjugate. Notice that, by \eqref{eq:Phirl0}, 
\begin{equation}\label{a:det}
    a(\lambda) = \det\bigl[\Phi_1^l(x;\lambda), \Phi_2^r(x;\lambda)\bigr],\ b(\lambda) = \det\bigl[\Phi_1^r(x;\lambda), \Phi_1^l(x;\lambda)\bigr], \quad \lambda\in\R\setminus\{0\}.
\end{equation}
Combining this with the properties of $\Phi(x;\lambda)$ in Proposition~\ref{prop:Phi}, 
it follows that $a(\lambda)$ admits an analytic continuation to 
$\mathbb{C}_+ \setminus (\Sigma_1^r \cup \Sigma_1^l)$. In the following, we still use $a(\lambda)$, since it continues to satisfy \eqref{a:det}.

\item For $\lambda \in \Sigma^r \cap \Sigma^l$, the boundary values of the Jost solutions $\Phi^{l}(x;\lambda_{\pm})$ and $\Phi^{r}(x;\lambda_{\pm})$ exist. 
Hence, there exists a scattering matrix $S(\lambda_{\pm})$ such that
\begin{equation}\label{eq:Phirl1}
    \Phi^l(x;\lambda_{\pm}) = \Phi^r(x;\lambda_{\pm}) S(\lambda_{\pm}),\quad \lambda \in \Sigma^r \cap \Sigma^l,
\end{equation}
and by the symmetry relation~\eqref{sym:Jconjugate}, the scattering matrix takes the form
\[
S(\lambda_{\pm}) = 
\begin{pmatrix}
    a(\lambda_{\pm}) & b_1^*(\lambda_{\pm}) \\
    b_1(\lambda_{\pm}) & a^*(\lambda_{\pm})
\end{pmatrix},\quad \lambda \in \Sigma^r \cap \Sigma^l,
\]
with
\begin{equation}\label{def:b1}
    b_1(\lambda) = \det\bigl[\Phi_1^r(x;\lambda), \Phi_1^l(x;\lambda)\bigr]\,\quad \lambda \in \Sigma^r \cap \Sigma^l.
\end{equation}
\item For $\lambda \in \Sigma_1^{r} \setminus \Sigma_1^{l}$, only the eigenvectors 
$\Phi_1^{l}(x;\lambda)$, $\Phi_1^{r}(x;\lambda_{\pm})$, and $\Phi_2^{r}(x;\lambda_{\pm})$ exist, 
giving the relation
\begin{equation}\label{eq:Phirl2}
    \Phi_1^{l}(x;\lambda) = \Phi^{r}(x;\lambda_{\pm})
    \begin{bmatrix}
        a(\lambda_{\pm}) \\[2pt]
        b_1(\lambda_{\pm})
    \end{bmatrix},\quad \lambda \in \Sigma_1^{r} \setminus \Sigma_1^{l}.
\end{equation}

\item For $\lambda \in \Sigma_1^{l} \setminus \Sigma_1^{r}$, only the eigenvectors 
$\Phi_2^{r}(x;\lambda)$, $\Phi_1^{l}(x;\lambda_{\pm})$, and $\Phi_2^{l}(x;\lambda_{\pm})$ exist, 
satisfying
\begin{equation}\label{eq:Phirl3}
    \Phi_2^{r}(x;\lambda) = \Phi^{l}(x;\lambda_{\pm})
    \begin{bmatrix}
        -b_1^{*}(\lambda_{\pm}) \\[2pt]
        a(\lambda_{\pm})
    \end{bmatrix},\quad \lambda \in \Sigma_1^{l} \setminus \Sigma_1^{r}.
\end{equation}

\item For $\lambda \in \Sigma_2^{r} \setminus \Sigma_2^{l}$, only the eigenvectors 
$\Phi_2^{l}(x;\lambda)$, $\Phi_1^{r}(x;\lambda_{\pm})$, and $\Phi_2^{r}(x;\lambda_{\pm})$ exist, 
with the corresponding relation
\begin{equation}\label{eq:Phirl4}
    \Phi_2^{l}(x;\lambda) = \Phi^{r}(x;\lambda_{\pm})
    \begin{bmatrix}
        b_1^{*}(\lambda_{\pm}) \\[2pt]
        a^{*}(\lambda_{\pm})
    \end{bmatrix},\quad \lambda \in \Sigma_2^{r} \setminus \Sigma_2^{l}.
\end{equation}

\item For $\lambda \in \Sigma_2^{l} \setminus \Sigma_2^{r}$, only the eigenvectors 
$\Phi_1^{r}(x;\lambda)$, $\Phi_1^{l}(x;\lambda_{\pm})$, and $\Phi_2^{l}(x;\lambda_{\pm})$ exist, 
and we have
\begin{equation}\label{eq:Phirl5}
    \Phi_1^{r}(x;\lambda) = \Phi^{l}(x;\lambda_{\pm})
    \begin{bmatrix}
        a^{*}(\lambda_{\pm}) \\[2pt]
        -b_1(\lambda_{\pm})
    \end{bmatrix},\quad \lambda \in \Sigma_2^{l} \setminus \Sigma_2^{r}.
\end{equation}
\end{itemize}
\begin{lem}\label{lem:ab}
Suppose that the initial data $u_0$ satisfies the condition \eqref{initial} and assumption \ref{assumption}. Then the functions $a(\lambda)$ and $b_1(\lambda)$ have the following properties:  

\begin{enumerate}
\item The function $a(\lambda)$ is defined on $\overline{\mathbb{C}_{+}}\cup(\Sigma_2^r\cap\Sigma_2^l)\setminus\{0,\pm i\eta^r_j,\pm \eta^l_j\}$, admits boundary values for $\lambda \in \Sigma_1^{r}\cup\Sigma_1^{l}$ and $\lambda\in\Sigma_2^r\cap\Sigma_2^l$, and is analytic in $\mathbb{C}_{+}\setminus\bigl(\Sigma_1^{r}\cup\Sigma_1^{l}\bigr)$. 
The function $b_1(\lambda)$ is defined and admits boundary values for $\lambda \in \Sigma_1^{r}\cup\Sigma_2^{l}\setminus\{\pm i\eta_j^r,\pm i\eta_j^l\},~j=1,2$.
\item  $a(\lambda)$ is $m_0$ differentiable for the left side and the right side $\lambda \in (\Sigma_1^r\cup\Sigma_1^l)\cup(\Sigma_2^l\cap\Sigma_2^r)\setminus \{\pm i\eta_j^r,\pm i\eta_j^r\}$ and $b_1(\lambda)$ is $m_0$ differentiable for the left side and the right side $\lambda\in\Sigma_1^r\cup\Sigma_2^l\setminus \{0,\pm i\eta_j^r,\pm i\eta_j^r\},~j=1,2$. $b(\lambda)$ is defined on $\lambda\in\R\setminus\{0\}$ and is $m_0$ differentiable for $\lambda\in\R\setminus\{0\}$.

\item For $\lambda \in \Sigma^{r}\cap\Sigma^{l}$, the scattering matrix $S(\lambda)$ satisfies
\begin{equation}\label{jump:S}
    S(\lambda_{+}) = \sigma_{1} S(\lambda_{-}) \sigma_{1}.
\end{equation}
More precisely, it follows that 
\begin{equation*}
\begin{aligned}
    a(\lambda_+)=a^*(\lambda_-),\quad b_1(\lambda_+)=b_1^*(\lambda_-),\quad \lambda\in\Sigma^r\cap\Sigma^l.
\end{aligned}
\end{equation*}
For $\lambda \in \Sigma^{r}\setminus\Sigma^{l}$ and $\lambda \in \Sigma^{l}\setminus\Sigma^{r}$, we have
\begin{equation}\label{eq:ba-jumps}
    \begin{aligned}
        & b_1(\lambda_+) = i\,a(\lambda_-), 
        && a(\lambda_+) = i\,b_1(\lambda_-),
        &&&\lambda \in \Sigma_1^r \setminus \Sigma_1^l, \\[1ex]
         & b_1^*(\lambda_+) = -i\,a(\lambda_-), 
        && a(\lambda_+) = -i\,b_1^*(\lambda_-),
        &&&\lambda \in \Sigma_1^l \setminus \Sigma_1^r, \\[1ex] 
        & b_1^*(\lambda_+) = -i\,a^*(\lambda_-), 
        &&a^*(\lambda_+) = -i\,b_1^*(\lambda_-),
        &&& \lambda \in \Sigma_2^r \setminus \Sigma_2^l,\\[1ex] 
        & b_1(\lambda_+) = i\,a^*(\lambda_-), 
        && a^*(\lambda_+) = i\,b_1(\lambda_-),
        &&& \lambda \in \Sigma_2^l \setminus \Sigma_2^r. \\[1ex]
    \end{aligned}
\end{equation}
\item For $\lambda \in \Sigma^l \cap \Sigma^r$ and for $\lambda \in \R$, the scattering matrices $S(\lambda)$ and $\mathbf{S}(\lambda)$ satisfy the following Schwartz conjugate symmetry:
\begin{equation}\label{sym:S}
\begin{aligned}
    & S(\lambda) = \sigma_{1} S^*(\lambda) \sigma_{1}, && \lambda \in \Sigma^l \cap \Sigma^r,\\
    & \mathbf{S}(\lambda) = \sigma_{1} \mathbf{S}^*(\lambda) \sigma_{1}=\sigma_1\mathbf{S}(-\lambda)\sigma_1, && \lambda \in \R.
\end{aligned}
\end{equation}
\item For each $x \in \R$, the scattering matrices $S(\lambda)$ and $\mathbf{S}(\lambda)$ satisfy
\begin{equation}\label{det:S}
    \det S(\lambda) = 1, \quad \lambda \in \Sigma^r \cap \Sigma^l, \qquad
    \det \mathbf{S}(\lambda) = 1, \quad \lambda \in \R.
\end{equation}
\item As $\lambda \to \infty$, the function $a(\lambda)$ admits the following asymptotic expansion:
\begin{equation}\label{expansion:a}
    a(\lambda) = e^{\,i(x_0^{l}-x_0^{r})\lambda} \bigl(1 + \mathcal{O}(\lambda^{-1})\bigr), \quad \lambda \to \infty,
\end{equation}
and if $u_0$ satisfies the condition \eqref{initial}, and then $b(\lambda)$ admits the following asymptotic expansion:
\begin{equation}\label{expansion:b}
    b(\lambda) = e^{\,i(x_0^{l}-x_0^{r})\lambda} \bigl( \mathcal{O}(\lambda^{-4})\bigr), \quad \lambda\in\R\text{ and }\lambda \to \infty.
\end{equation}
Moreover, as $\lambda \to 0$, the functions $a(\lambda)$ and $b(\lambda)$ have at most a simple pole.

\end{enumerate}
\end{lem}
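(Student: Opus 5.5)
Every property in Lemma~\ref{lem:ab} will be read off from the Wronskian representations \eqref{a:det}, \eqref{def:b1}, using the properties of $\Phi^l,\Phi^r$ collected in Proposition~\ref{prop:Phi} and Lemmas~\ref{lem:asymptotic}--\ref{lem:Phi-endpoints}. Since both columns solve the traceless system \eqref{Lax:matrix-new}, every $2\times2$ determinant of two solution columns is independent of $x$, so we may evaluate it at any convenient $x$.

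\textbf{Items 1--2 (domain, analyticity, smoothness).} The function $a=\det[\Phi_1^l,\Phi_2^r]$ is built from $\Phi_1^l$, analytic in $\mathbb{C}_+\setminus\Sigma_1^l$, and $\Phi_2^r$, analytic in $\mathbb{C}_+\setminus\Sigma_1^r$. Hence $a$ is analytic in $\mathbb{C}_+\setminus(\Sigma_1^r\cup\Sigma_1^l)$. Its boundary values on these bands, and on $\Sigma_2^r\cap\Sigma_2^l$ where both columns have boundary values, are inherited from the columns. In the same way, $b_1=\det[\Phi_1^r,\Phi_1^l]$ exists wherever both first columns have boundary values, namely on $\Sigma_1^r\cup\Sigma_2^l$. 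The $m_0$-fold differentiability of $a$, $b_1$ and $b$ on the bands and on $\mathbb{R}\setminus\{0\}$ follows from the $m_0$-fold differentiability of the boundary values of $\Phi^s$ (Proposition~\ref{prop:Phi}, coming from Proposition~\ref{prop:J}(2)).

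\textbf{Items 3--5 (jumps, symmetries, determinant).} Insert the band jumps \eqref{jump:Phir} into the relations \eqref{eq:Phirl1}--\eqref{eq:Phirl5}.

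On $\Sigma^r\cap\Sigma^l$, both $\Phi^l$ and $\Phi^r$ jump by $\mp i\sigma_1$. Hence
\[
S(\lambda_+)=(\mp i\sigma_1)^{-1}S(\lambda_-)(\mp i\sigma_1)=\sigma_1S(\lambda_-)\sigma_1,
\]
and comparing entries gives $a(\lambda_+)=a^*(\lambda_-)$ and $b_1(\lambda_+)=b_1^*(\lambda_-)$.

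On $\Sigma_1^r\setminus\Sigma_1^l$, the column $\Phi_1^l$ is continuous across the band while $\Phi^r(\lambda_+)=\Phi^r(\lambda_-)(-i\sigma_1)$. Substituting into \eqref{eq:Phirl2} gives
\[
\begin{bmatrix}a(\lambda_-)\\ b_1(\lambda_-)\end{bmatrix}=-i\sigma_1\begin{bmatrix}a(\lambda_+)\\ b_1(\lambda_+)\end{bmatrix},
\]
which yields $b_1(\lambda_+)=i\,a(\lambda_-)$ and $a(\lambda_+)=i\,b_1(\lambda_-)$. The other three cases of \eqref{eq:ba-jumps} follow in the same way, with attention to the orientation of each band and the sign of its jump matrix.

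The symmetry relations \eqref{sym:S} follow by applying \eqref{sym:Phi-pm} and \eqref{sym:Phi-conjuate} to both sides of $\Phi^l=\Phi^r\mathbf S$. For example,
\[
\overline{\Phi^l(\bar\lambda)}=\sigma_1\Phi^l\sigma_1
\quad\Longrightarrow\quad
\mathbf S=\sigma_1\mathbf S^*\sigma_1 .
\]
The determinant identities \eqref{det:S} follow from $\det\Phi^s=1$ in \eqref{eq:detPhi}.

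\textbf{Item 6 (asymptotics).} Evaluate $a$ using Lemma~\ref{lem:asymptotic}:
\[
\Phi_1^l\sim e^{-i(x-x_0^l)\lambda}\mathbf e_1,\qquad \Phi_2^r\sim e^{i(x-x_0^r)\lambda}\mathbf e_2 .
\]
The $x$-dependence cancels and the product of exponents is $e^{i(x_0^l-x_0^r)\lambda}$, giving $a=e^{i(x_0^l-x_0^r)\lambda}(1+\mathcal O(\lambda^{-1}))$.

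For $b$ on $\mathbb R$, the leading terms cancel and only a decaying remainder survives. My plan is to write
\[
b=\det\bigl[\Phi_1^r,\Phi_1^l\bigr]=\det\Bigl[\Gamma^r e^{-i(x-x_0^r)\lambda\sigma_3}\mathbf e_1,\ \Gamma^l e^{-i(x-x_0^l)\lambda\sigma_3}\mathbf e_1\Bigr].
\]
I then expand with $\Gamma^s=I+\sum_{j\le3}\Gamma^s_j\lambda^{-j}+\mathcal O(\lambda^{-4})$ and show that the coefficients of $\lambda^0,\dots,\lambda^{-3}$ vanish.

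I expect this cancellation to be the main obstacle. The determinant is $x$-independent, but the individual coefficients need not cancel termwise. The strategy I would try first is as follows. The first columns of $\Gamma^s_j$ are generated by the recursion \eqref{eq:gamma-exp} from $u_0$ alone, up to constants of integration fixed at $\pm\infty$. The difference of the two formal series is therefore a formal solution with a purely diagonal constant normalization, and the off-diagonal entries that could contribute to the determinant at orders $\lambda^{-1},\dots,\lambda^{-3}$ coincide for $s=l$ and $s=r$.

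If that route becomes messy, the fallback is an integration-by-parts argument. I would write $b$ through the integral equations of Proposition~\ref{prop:laxequationforJ} as an oscillatory integral $\int e^{2i\lambda y}(\cdots)\,dy$ and integrate by parts four times, which is justified by $n_0\ge m_0+5$ derivatives of $u_0-u_0^s$ in $L^1$.

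Finally, the simple-pole bound at $\lambda=0$ follows from the at most simple poles of $\Phi^s$ (Lemma~\ref{lem:Phiat0}) together with $\det\Phi^s=1$, which rules out a double pole in the Wronskians.
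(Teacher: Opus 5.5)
Your overall route is the paper's: Wronskian representations, substitution of the band jumps \eqref{jump:Phir} into \eqref{eq:Phirl1}--\eqref{eq:Phirl5}, the symmetries of $\Phi^s$, $\det\Phi^s=1$, and the large-$\lambda$ recursion for $\Gamma^s$. Items 1--5 and the expansion of $a$ are handled correctly. Two steps, however, do not work as you state them.

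\textbf{The decay of $b$.} You assert that the off-diagonal entries of $\Gamma^l_j$ and $\Gamma^r_j$ coincide for $j\le 3$. This is false at $j=3$, and if it were true the cancellation would fail.

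Write the first column of $\Gamma^s_j$ as $(A^s_j,C^s_j)^T$. With the normalization $\Phi^s=\Gamma^s e^{-i(x-x_0^s)\lambda\sigma_3}$ of Lemma~\ref{lem:asymptotic}, equation \eqref{Lax:matrix-new} gives $\partial_x\Gamma^s_k+i[\sigma_3,\Gamma^s_{k+1}]=\frac{iu_0}{2}\begin{pmatrix}1&1\\-1&-1\end{pmatrix}\Gamma^s_{k-1}$. Hence $C^s_1=0$, $\partial_xA^s_1=\frac{i}{2}u_0$, $C^s_2=\frac{u_0}{4}$, and $C^s_3=\frac{u_0'}{8i}+\frac{u_0}{4}A^s_1$.

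So $C^l_3-C^r_3=\frac{u_0}{4}(A^l_1-A^r_1)$. Here $A^l_1-A^r_1$ is an $x$-independent constant: it is the $\lambda^{-1}$ coefficient of $a(\lambda)e^{-i(x_0^l-x_0^r)\lambda}$, and for zero backgrounds it equals $\frac{i}{2}\int_{\mathbb{R}}u_0$. It is generically nonzero.

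The $\lambda^{-3}$ coefficient of $\det[\Gamma^re_1,\Gamma^le_1]$ is $C^l_3-C^r_3+A^r_1C^l_2-A^l_1C^r_2$. It vanishes because the nonzero difference $C^l_3-C^r_3$ is cancelled by the cross terms. Equal $C_3$'s would instead leave $\frac{u_0}{4}(A^r_1-A^l_1)\neq 0$.

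The correct consequence of your first clause is \emph{proportionality}. The recursion fixes each $C_{k+1}$ algebraically and each $A_k$ up to an additive constant. Therefore $\Gamma^le_1=c(\lambda)\Gamma^re_1+\mathcal{O}(\lambda^{-4})$ for a scalar series $c(\lambda)=1+c_1\lambda^{-1}+\cdots$ with $x$-independent coefficients. The Wronskian of two proportional vectors then vanishes through order $\lambda^{-3}$. This is exactly what the paper's relations $\beta_2^l=\beta_2^r$ and $\beta_3^l-\beta_3^r=\beta_2^l(\bar\alpha_1^l-\bar\alpha_1^r)$ express, and it makes your integration-by-parts fallback unnecessary.

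\textbf{The pole at $\lambda=0$.} The identity $\det\Phi^s=1$ only forces the two residues of the \emph{same} $\Phi^s$ to be parallel. It says nothing about a residue of $\Phi^l$ compared with one of $\Phi^r$, and that comparison is exactly what the $\lambda^{-2}$ term of the mixed Wronskians $a=\det[\Phi^l_1,\Phi^r_2]$ and $b=\det[\Phi^r_1,\Phi^l_1]$ involves.

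What removes the double pole is the second half of Lemma~\ref{lem:Phiat0}. The row vector $\begin{bmatrix}1&1\end{bmatrix}\Phi^s$ is regular at $0$ for both $s=l$ and $s=r$. So every residue vector of $\Phi^l$ and of $\Phi^r$ is a multiple of the common vector $(1,-1)^T$, and the determinant of any two of them vanishes. This is what the paper's one-line appeal to Lemma~\ref{lem:Phiat0} relies on.
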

\begin{proof}
Collecting the relations~\eqref{eq:Phirl1}--\eqref{eq:Phirl5}, it follows that the functions $a(\lambda)$ and $b_1(\lambda)$ can be characterized by
\begin{equation}\label{def:b}
    a(\lambda) = \det\bigl[\Phi_1^l(x;\lambda), \Phi_2^r(x;\lambda)\bigr], \qquad
    b_1(\lambda) = \det\bigl[\Phi_1^r(x;\lambda), \Phi_1^l(x;\lambda)\bigr].
\end{equation}
Then, by Proposition~\ref{prop:Phi} concerning $\Phi^l(x;\lambda)$ and $\Phi^r(x;\lambda)$, the definition, continuation, and analyticity properties of $a(\lambda)$ and $b_1(\lambda)$ are established.

Regarding the jump conditions, for $\lambda \in \Sigma_1^l \cap \Sigma_1^r$, using the relations in~\eqref{jump:Phir}, it follows that
\begin{equation*}
\begin{aligned}
    \Phi^l(x;\lambda_+) 
        &= \Phi^l(x;\lambda_-)(-i\sigma_1) 
        = \Phi^r(x;\lambda_-) S(\lambda_-)(-i\sigma_1) \\
        &= \Phi^r(x;\lambda_+)(i\sigma_1) S(\lambda_-)(-i\sigma_1) 
        = \Phi^r(x;\lambda_+) S(\lambda_+).
\end{aligned}
\end{equation*}
For $\lambda \in \Sigma_1^l \setminus \Sigma_1^r$, we have
\begin{equation*}
\begin{aligned}
    \Phi_1^l(x;\lambda) 
        &= \Phi^r(x;\lambda_+) 
            \begin{bmatrix}
                a(\lambda_+) \\[2pt]
                b_1(\lambda_+)
            \end{bmatrix} 
        = \Phi^r(x;\lambda_-) (-i\sigma_1)
            \begin{bmatrix}
                a(\lambda_+) \\[2pt]
                b_1(\lambda_+)
            \end{bmatrix} 
        = \Phi^r(x;\lambda_-) 
            \begin{bmatrix}
                a^*(\lambda_-) \\[2pt]
                b_1(\lambda_-)
            \end{bmatrix}.
\end{aligned}
\end{equation*}
The remaining jump relations can be derived by similar computations.

From the determinants of $\Phi^l(x;\lambda)$ and $\Phi^r(x;\lambda)$ in~\eqref{eq:detPhi}, the determinants of the scattering matrices $S(\lambda)$ and $\mathbf{S}(\lambda)$ follow straightforwardly from the relations~\eqref{eq:Phirl0} and~\eqref{eq:Phirl1}.

Substituting the expansions of $\Phi^l(x;\lambda)$ and $\Phi^r(x;\lambda)$ as $\lambda \to \infty$ in~\eqref{expansion:Phi} into the representation of $a(\lambda)$ in~\eqref{a:det}, we obtain
\[
a(\lambda) = \det\bigl[\Phi_1^l(x;\lambda), \Phi_2^r(x;\lambda)\bigr]
= \det
\begin{pmatrix}
    e^{-i(x-x_0^l)} & 0 \\
    0 & e^{i(x-x_0^r)}
\end{pmatrix} + \mathcal{O}(\lambda^{-1}), \quad \lambda \to \infty.
\]

By the symmetry condition in \eqref{sym:Phi-conjuate}, we can assume that
$
\Gamma_k^s=\begin{pmatrix}
    \alpha_k^s&\beta_k^s\\
    \bar{\beta_k^s}&\bar{\alpha_k^s}
\end{pmatrix},
$ and then we have the following equation:
\begin{equation}\label{eq:alpha-beta}
   \begin{cases}
       \begin{aligned}
           &\partial_x\alpha_k^s-i\sum_{l=1}^k p_l^s\alpha_{k-l}^s=\frac{u_0}{2}(\alpha_{k-1}^s+\bar{\beta}_{k-1}^s),\\
           &\partial_x\beta_k^s+2i\beta_{k+1}^s+i\sum_{l=1}^k p_l^s\beta_{k-l}^s=\frac{u_0}{2}(\bar{\alpha}_{k-1}^s+{\beta}_{k-1}^s),\\
       \end{aligned}
   \end{cases} 
\end{equation}
with $\beta_0^s=\beta_1^s=0$
 and $\alpha_0^s=1$.

For $b(\lambda)$, by the definition in \eqref{def:b}, it follows that
\begin{equation*}
    b(\lambda)e^{-i(x_0^l-x_0^r)\lambda}=\left[\frac{(\bar{\beta}_2^l-\bar{\beta}_2^r)}{\lambda^2}+\frac{b_3}{\lambda^3}\right]+\mathcal{O}(\lambda^{-4}),
\end{equation*}
with
$b_3=\alpha^{r}_{1}\,\bar{\beta}^{l}_{2} + \bar{\beta}^{l}_{3}
- i\Bigl(
 -i\,\alpha^{l}_{1}\,\bar{\beta}^{r}_{2}
 - i\,\beta^{r}_{3}
 + \bar{\beta}^{l}_{2}\,p_{1}^r\,x^{l}_0
 - \bar{\beta}^{r}_{2}\,p_{1}^r\,x^{l}_0
 - \bar{\beta}^{l}_{2}\,p_{1}^r\,x^{r}_0
 + \bar{\beta}^{r}_{2}\,p_{1}^r\,x^{r}_0
\Bigr).
$
By the system in \eqref{eq:alpha-beta}, we obtain that
$$
\beta_2^l=\beta_2^r=\frac{u_0}{4i},\beta_3^l-\beta_3^r=\beta_2^l(\bar{\alpha}_1^l-\bar{\alpha}_1^r).
$$
Thus, if $\Gamma^s(x;\lambda)$ admits the expansion at fourth order, then $ b(\lambda) = e^{\,i(x_0^{l}-x_0^{r})\lambda} \bigl( \mathcal{O}(\lambda^{-4})\bigr),\ \lambda \to \infty$.

Regarding the behaviour of $a(\lambda)$ as $\lambda \to 0$, Lemma~\ref{lem:Phiat0} implies that $a(\lambda),b(\lambda)$  have at most a simple pole.
\end{proof}

\begin{lem}
[Local behaviour of $a(\lambda)$ and $b_1(\lambda)$ at the endpoints]
\label{lem:ab-endpoints}
Suppose that the initial datum $u_0$ satisfies the condition~\eqref{initial}.
As $\lambda \to \pm i\eta_j^s$, $j=1,2$, the scattering data $a(\lambda)$ and $b_1(\lambda)$ admit at most fourth--root singularities in their respective domains.

In particular, define
\[
\widehat{\Phi}^s_1(\lambda):=\gamma^s(\lambda)\Phi^s_1(\lambda),
\qquad
\widehat{\Phi}^s_2(\lambda):=(\gamma^s(\lambda))^{-1}\Phi^s_2(\lambda).
\]
Then the following limits exist:
\begin{equation}\label{eq:limit1}
\begin{aligned}
    &\lim_{\substack{\lambda\to i\eta_2^r \\ \lambda\in\Sigma_1^r}}\frac{b_1(\lambda_+)}{b_1(\lambda_-)}
    =
    -i
    \frac{
        \det\!\left[\widehat{\Phi}_1^r(i\eta_{2}^r),\Phi_1^l(i\eta_{2,+}^r)\right]
    }{
        \det\!\left[\widehat{\Phi}_1^r(i\eta_{2}^r),\Phi_1^l(i\eta_{2,-}^r)\right]
    },
    \quad
    \lim_{\substack{\lambda\to i\eta_1^r \\ \lambda\in\Sigma_1^r}}\frac{b_1(\lambda_+)}{b_1(\lambda_-)}
    =
    i
    \frac{
        \det\!\left[\widehat{\Phi}_1^r(i\eta_{1}^r),\Phi_1^l(i\eta_{1,+}^r)\right]
    }{
        \det\!\left[\widehat{\Phi}_1^r(i\eta_{1}^r),\Phi_1^l(i\eta_{1,-}^r)\right]
     },\\
    &\lim_{\substack{\lambda\to i\eta_2^l \\ \lambda\in\Sigma_1^l}}\frac{b_1^*(\lambda_+)}{b_1^*(\lambda_-)}
    =
    -i
    \frac{
        \det\!\left[{\Phi}_2^r(i\eta_{2,+}^l),\widehat{\Phi}_2^l(i\eta_{2}^l)\right]
    }{
        \det\!\left[{\Phi}_2^r(i\eta_{2,+}^l),\widehat{\Phi}_2^l(i\eta_{2}^l)\right]
    },
    \quad
    \lim_{\substack{\lambda\to i\eta_1^l \\ \lambda\in\Sigma_1^l}}\frac{b_1(\lambda_+)}{b_1(\lambda_-)}
    =
    i
    \frac{
        \det\!\left[\widehat{\Phi}_1^r(i\eta_{1}^r),\Phi_1^l(i\eta_{1,+}^r)\right]
    }{
        \det\!\left[\widehat{\Phi}_1^r(i\eta_{1}^r),\Phi_1^l(i\eta_{1,-}^r)\right]
     },
\end{aligned}
\end{equation}
and 
\begin{equation}\label{eq:limit2}
\begin{aligned}
   &\lim_{\substack{\lambda\to i\eta_j^l \\ \lambda\in\Sigma_1^r\setminus\Sigma_1^l}}\frac{b_1(\lambda_+)}{b_1(\lambda_-)}
    =
    \frac{
        \det\!\left[{\Phi}_1^r(i\eta_{j,+}^l),\widehat{\Phi}_1^l(i\eta_{j}^l)\right]
    }{
        \det\!\left[{\Phi}_1^r(i\eta_{j,-}^l),\widehat{\Phi}_1^l(i\eta_{j}^l)\right]
    }
    \quad
    \lim_{\substack{\lambda\to i\eta_j^r \\ \lambda\in\Sigma_1^l\setminus\Sigma_1^r}}\frac{b_1^*(\lambda_+)}{b_1^*(\lambda_-)}
    =
    \frac{
        \det\!\left[\widehat{\Phi}_2^r(i\eta_{j}^l),{\Phi}_2^l(i\eta_{j,+}^l)\right]
    }{
        \det\!\left[\widehat{\Phi}_2^r(i\eta_{j}^l),{\Phi}_2^l(i\eta_{j,-}^l)\right]
    },~j=1,2.
\end{aligned}
\end{equation}
\end{lem}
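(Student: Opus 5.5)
The plan is to reduce everything to the local structure of the Jost columns at the branch points, as given by Lemma~\ref{lem:Phi-endpoints}, and then read off the behaviour of the Wronskians in \eqref{def:b}. By that lemma and the expansion \eqref{eq:expansion of eta2}, near $\lambda=\pm i\eta_j^s$ the column $\Phi_1^s$ behaves like $\gamma^s(\lambda)^{-1}$ times a Puiseux series in $\sqrt{\lambda\mp i\eta_j^s}$, and $\Phi_2^s$ like $\gamma^s(\lambda)$ times such a series. Here $\gamma^s=\bigl((\lambda^2+(\eta_1^s)^2)/(\lambda^2+(\eta_2^s)^2)\bigr)^{1/4}$ vanishes or blows up like a fourth root. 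Consequently $\widehat\Phi_1^s=\gamma^s\Phi_1^s$ and $\widehat\Phi_2^s=(\gamma^s)^{-1}\Phi_2^s$ extend continuously to the endpoints, and their boundary values from the two sides agree at the endpoint itself. I will justify this by running the Volterra equation \eqref{eq:Phil} for $\Psi^l$ with the normalization $\gamma$ factored out. The kernel $\Psi_0^l(x)\Psi_0^l(y)^{-1}$ is continuous up to the endpoint (its square-root terms vanish there), so the Neumann-series bound from the proof of Lemma~\ref{lem:Phi-endpoints} gives continuity of the normalized solution at the endpoint. The weight $(1+|y|)$ is absorbed by \eqref{initial}. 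The columns of the other background are analytic near these points, unless the point is a common endpoint, which is excluded by the configurations in Figure~\ref{fig:bands}.

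For the first assertion I substitute these local forms into $a=\det[\Phi_1^l,\Phi_2^r]$ and $b_1=\det[\Phi_1^r,\Phi_1^l]$. Each determinant contains at most one factor $(\gamma^s)^{\pm1}$ singular at a given endpoint, because each column is singular only at its own background's branch points. It is therefore bounded by $C|\lambda\mp i\eta_j^s|^{-1/4}$, which is the claimed fourth-root bound.

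For \eqref{eq:limit1}, take for example $\lambda\to i\eta_2^r$ along $\Sigma_1^r$. I will write $b_1(\lambda_\pm)=\gamma^r(\lambda_\pm)^{-1}\det[\widehat\Phi_1^r(\lambda_\pm),\Phi_1^l(\lambda_\pm)]$. The ratio $\gamma^r_+/\gamma^r_-=-i$ on $\Sigma_1^r$ (the jump of $\gamma$ recorded after \eqref{solution:m}) supplies the prefactor $(-i)^{-1}=i$. At $i\eta_1^r$ the orientation of the fourth root flips, and this gives the opposite sign. Continuity of $\widehat\Phi_1^r$ at the endpoint lets me replace $\widehat\Phi_1^r(\lambda_\pm)$ by the common value $\widehat\Phi_1^r(i\eta_2^r)$. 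If $i\eta_2^r$ also lies in the interior of $\Sigma_1^l$, then $\Phi_1^l$ keeps its two distinct boundary values $\Phi_1^l(i\eta_{2,\pm}^r)$. The $l$-band cases are handled the same way using $b_1^*$ and $\widehat\Phi_2^l$, with $(\gamma^l)^{-1}$ as the singular factor.

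For \eqref{eq:limit2} the roles are reversed. The endpoint belongs to the other background and lies inside the band $\Sigma_1^r\setminus\Sigma_1^l$. The singular column is now $\Phi_1^l$, which I normalize as $\widehat\Phi_1^l$. Its $\gamma^l$ factor is single-valued across $\Sigma_1^r$ near $i\eta_j^l$, so it cancels in the ratio and no $\pm i$ appears. The remaining ratio is the quotient of determinants involving $\Phi_1^r(i\eta_{j,\pm}^l)$.

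The main obstacle is rigorously establishing the continuity of $\widehat\Phi^s$ at the endpoint, including the fact that both one-sided limits coincide. This requires showing that the $\sqrt{\lambda-i\eta}$ corrections in the kernel vanish uniformly in $x$ and $y$, with the polynomial growth controlled by the moments in \eqref{initial}. I would approach this by dominated convergence applied term by term in the Neumann series. A further difficulty is that the limits in \eqref{eq:limit1}–\eqref{eq:limit2} are meaningful only if the denominators are nonzero. I will state this as the generic case and otherwise interpret each limit as a ratio of leading Puiseux coefficients.
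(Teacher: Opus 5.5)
Your overall route is the paper's. The fourth-root bound is read off from Lemma~\ref{lem:Phi-endpoints} and the Wronskian formulas, and that part is fine. The limits come from pulling a power of $\gamma^s$ out of the singular column, using $\gamma_+=-i\gamma_-$ on the band, and using that the normalized column has a common value at the endpoint.

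\textbf{The normalization at $i\eta_2^s$ is wrong.} In \eqref{solution:m}, and in \eqref{eq:expansion of eta2} which you cite, $\gamma^s$ is a \emph{common} scalar prefactor of $m_1^s$ and $m_2^s$. Through \eqref{O:genus1} and \eqref{Jost:Phi} it is therefore a common prefactor of both columns of $\Phi^s$. It is not true that $\Phi_1^s\sim(\gamma^s)^{-1}\times(\text{regular})$ while $\Phi_2^s\sim\gamma^s\times(\text{regular})$.

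Near $i\eta_2^s$ one has $\gamma^s\sim(\lambda-i\eta_2^s)^{-1/4}$ and $\Phi_1^s\sim\gamma^s$. So $\gamma^s\Phi_1^s$ generically blows up like $(\lambda-i\eta_2^s)^{-1/2}$ and has no endpoint value; the bounded combination is $(\gamma^s)^{-1}\Phi_1^s$. That is what the paper's proof actually uses: it writes $b_1(\lambda_\pm)=\gamma^r(\lambda_\pm)\det[(\gamma^r(\lambda_\pm))^{-1}\Phi_1^r(\lambda_\pm),\Phi_1^l(\lambda_\pm)]$. This gives the prefactor $\gamma^r_+/\gamma^r_-=-i$ in \eqref{eq:limit1}.

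Your factorization $b_1=(\gamma^r)^{-1}\det[\gamma^r\Phi_1^r,\Phi_1^l]$ instead gives $\gamma^r_-/\gamma^r_+=+i$ at $i\eta_2^r$. That contradicts the statement, and the proposal does not flag the mismatch. The displayed definition $\widehat\Phi_1^s=\gamma^s\Phi_1^s$ is the right normalization only at $i\eta_1^s$; taking it literally at $i\eta_2^s$ is what misled you.

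\textbf{The sign change between the endpoints is misattributed.} The relation $\gamma_+=-i\gamma_-$ holds on the whole band $(i\eta_1^s,i\eta_2^s)$, so a fixed normalization gives the same prefactor at both ends. There is no ``orientation flip of the fourth root''. What changes is which power of $\gamma^s$ regularizes the column.

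\begin{itemize}
\item At $i\eta_1^s$, $\gamma^s$ vanishes like $(\lambda-i\eta_1^s)^{1/4}$.
\item The denominators $\vartheta_3(\pm2J-\tfrac12;2\tau)$ in \eqref{solution:m} vanish like $(\lambda-i\eta_1^s)^{1/2}$, because $\pm2J_+(i\eta_1^s)-\tfrac12=\mp\tau-\tfrac12$ is a zero of $\vartheta_3(\cdot\,;2\tau)$.
\item Hence $\Phi^s\sim(\lambda-i\eta_1^s)^{-1/4}$ there, the bounded combination is $\gamma^s\Phi_1^s$, and the prefactor is $\gamma_-/\gamma_+=+i$.
\end{itemize}

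The consistent rule is to multiply the singular column by whichever of $(\gamma^s)^{\pm1}$ behaves like $(\lambda-i\eta_j^s)^{1/4}$. This yields $-i$ at $i\eta_2^r$ and $+i$ at $i\eta_1^r$, as in \eqref{eq:limit1}. Your argument yields the opposite sign at both endpoints.

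\textbf{Knock-on effects.} Your Volterra/dominated-convergence continuity argument must be run for the correctly normalized column. For \eqref{eq:limit2}, your cancellation argument is fine, since $\gamma^l$ has no jump on $\Sigma_1^r\setminus\Sigma_1^l$. Even there, $\widehat\Phi_1^l(i\eta_j^l)$ exists at $j=2$ only with the correct power of $\gamma^l$.
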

\begin{proof}
   By Lemma~\ref{lem:Phi-endpoints}, the Jost solutions $\Phi^l(x;\lambda)$ possess fourth--root singularities at $\pm i\eta_j^l$, $j=1,2$, while $\Phi^r(x;\lambda)$ possess fourth--root singularities at $\pm i\eta_j^r$, $j=1,2$. 
By the definitions of $a(\lambda)$ in~\eqref{a:det} and $b_1(\lambda)$ in~\eqref{def:b1}, it then follows that the scattering data $a(\lambda)$ and $b_1(\lambda)$ admit at most a fourth--root singularity at each endpoint.

The analysis of the remaining limits follows in the same way, so we only consider the limit at $\lambda=i\eta_2^r$. 
Recall the expansions of $m_1^r(\lambda)$ and $m_2^r(\lambda)$ in \eqref{eq:expansion of eta2}. 
It follows that
\[
\lim_{\lambda \to i\eta_2^r} (\gamma^r(\lambda_+))^{-1} m_j^r(\lambda_+)
=
\lim_{\lambda \to i\eta_2^r} (\gamma^r(\lambda_-))^{-1} m_j^r(\lambda_-),
\]
and, by Lemma~\ref{lem:Phi-endpoints},
\[
\widehat{\Phi}_1^r(i\eta_2^r)
:=
\lim_{\lambda \to i\eta_2^r} \widehat{\Phi}_1^r(\lambda_+)
=
\lim_{\lambda \to i\eta_2^r} \widehat{\Phi}_1^r(\lambda_-).
\]
Substituting the above local behaviour into \eqref{def:b1}, we obtain
\begin{equation*}
\begin{aligned}
\lim_{\lambda \to i\eta_2^r}
\frac{b_1(\lambda_+)}{b_1(\lambda_-)}
&=
\lim_{\lambda \to i\eta_2^r}
\frac{\det[\Phi_1^r(\lambda_+),\Phi_1^l(\lambda_+)]}
{\det[\Phi_1^r(\lambda_-),\Phi_1^l(\lambda_-)]} \\
&=
\lim_{\lambda \to i\eta_2^r}
\frac{
\gamma^r(\lambda_+)
\det[(\gamma^r(\lambda_+))^{-1}\Phi_1^r(\lambda_+),\Phi_1^l(\lambda_+)]
}{
\gamma^r(\lambda_-)
\det[(\gamma^r(\lambda_-))^{-1}\Phi_1^r(\lambda_-),\Phi_1^l(\lambda_-)]
}.
\end{aligned}
\end{equation*}
\end{proof}

\section{RH problem for the step-like KdV finite gap potentials}\label{section:RHP}
Define a $2\times2$ matrix-valued meromorphic function $M(x;\lambda)$ by
\begin{equation}\label{def:M}
M(x;\lambda)=
\begin{cases}
\begin{pmatrix}
\dfrac{\Phi_1^l(x;\lambda)}{a(\lambda)} & \Phi_2^r(x;\lambda)
\end{pmatrix}
e^{\,i(x-x_0^r)\lambda\sigma_3}, & \lambda \in \C_+\setminus\{0\}, \\[10pt]
\begin{pmatrix}
\Phi_1^r(x;\lambda) & \dfrac{\Phi_2^{l}(x;\lambda)}{a^{*}(\lambda)}
\end{pmatrix}
e^{\,i(x-x_0^r)\lambda\sigma_3}, & \lambda \in \C_-\setminus\{0\} .
\end{cases}
\end{equation}
Here $\Phi_j^r(x;\lambda)$ and $\Phi_j^l(x;\lambda)$ ($j=1,2$) denote the $j$-th column of $\Phi^l(x;\lambda)$ and $\Phi^r(x;\lambda)$ in (\ref{Jost:Phi}), respectively. Then the $2\times2$ matrix-valued meromorphic $M(x;\lambda)$ satisfies the following RH problem:
\begin{RHP}\label{RHP:M}
The matrix function $M(x;\lambda)$ is analytic for 
$\lambda \in \C \setminus (\R \cup \Sigma^{l}\cup\Sigma^r)$, 
with at most a simple singularity at $\lambda=0$.
\begin{enumerate}
    \item $M(x;\lambda)$ satisfies the following jump conditions:
    \begin{equation}
         M_+(x;\lambda)=M_-(x;\lambda)V^{(M)}(x;\lambda),
    \end{equation}
    where
    \begin{equation*}
    V^{(M)}(x;\lambda)=\begin{cases} \begin{aligned} &\begin{pmatrix} \frac{a(\lambda_-)}{ib_1(\lambda_-)}&-ie^{-2i(x-x^r_0)\lambda}\\ 0&\frac{ib_1(\lambda_-)}{a(\lambda_-)} \end{pmatrix},&&\lambda\in\Sigma_1^r\setminus\Sigma_1^l,\\ 
    &\begin{pmatrix} \frac{-ib_1^*(\lambda_-)}{a(\lambda_+)}&-ie^{-2i(x-x^r_0)\lambda}\\ \frac{-i e^{2i(x-x^r_0)\lambda}}{a(\lambda_+)a(\lambda_-)}&\frac{ib_1(\lambda_-)}{a(\lambda_-)} \end{pmatrix},&&\lambda\in\Sigma_1^r\cap\Sigma_1^l,\\ &\begin{pmatrix} \frac{-ib_1^*(\lambda_-)}{a(\lambda_+)}&0\\ \frac{-ie^{2i(x-x^r_0)\lambda}}{a(\lambda_+)a(\lambda_-)}&1 \end{pmatrix},&&\lambda\in\Sigma_1^l\setminus\Sigma_1^r,\\ &\begin{pmatrix} \frac{1}{a(\lambda)a^*(\lambda)}&-\frac{b^*(\lambda)}{a^*(\lambda)}e^{-2i(x-x^r_0)\lambda}\\ \frac{b(\lambda)}{a(\lambda)}e^{2i(x-x^r_0)\lambda}&1 \end{pmatrix},&&\lambda\in \R,\\ &\begin{pmatrix} 1&\frac{ie^{-2i(x-x^r_0)}}{a^*(\lambda_+)a^*(\lambda_-)}\\ 0&i\frac{b_1(\lambda_-)}{a^*(\lambda_+)} \end{pmatrix},&&\lambda\in \Sigma_2^{l}\setminus\Sigma_2^{r},\\ &\begin{pmatrix} -\frac{ib_1^*(\lambda_-)}{a^{*}(\lambda_-)}&\frac{ie^{-2i(x-x^r_0)}}{a^*(\lambda_+)a^*(\lambda_-)}\\ ie^{2i(x-x^r_0)}&\frac{ib_1(\lambda_-)}{a^*(\lambda_+)} \end{pmatrix},&&\lambda\in\Sigma_2^{r}\cap\Sigma_2^{l},\\ &\begin{pmatrix} -\frac{ib_1^*(\lambda_-)}{a^*(\lambda_-)}&0\\ ie^{2i(x-x^r_0)}&\frac{ia^*(\lambda_-)}{b_1^*(\lambda_-)} \end{pmatrix},&&\lambda\in\Sigma_2^r\setminus\Sigma_2^l. \end{aligned} \end{cases} 
    \end{equation*}

    \item Normalization condition:
    \[
        M(x;\lambda) = I+\mathcal{O}(\lambda^{-1}), \qquad \lambda \to \infty.
    \]

    \item Symmetry condition:
    \[
       M^*(x;\lambda)= M(x;-\lambda) = \sigma_1 M(x;\lambda) \sigma_1.
    \]
\end{enumerate}   
\end{RHP}
\begin{proof}
For $\lambda \in \Sigma_1^r \setminus \Sigma_1^l$, the vector function $\Phi_1^l(x;\lambda)$ has no jump, and hence
\begin{equation*}
\frac{\Phi_1^l(x;\lambda) e^{i(x-x_0^r)\lambda}}{a(\lambda_+)}
= \frac{\Phi_1^l(x;\lambda) e^{i(x-x_0^r)\lambda}}{a(\lambda_-)}
  \frac{a(\lambda_-)}{a(\lambda_+)}
= \frac{\Phi_1^l(x;\lambda) e^{i(x-x_0^r)\lambda}}{a(\lambda_-)}
  \frac{a(\lambda_-)}{i\,b_1(\lambda_-)},
\end{equation*}
where the last equality follows from~\eqref{eq:ba-jumps}.
As for $\Phi_2^r(x;\lambda)$, we have
\begin{equation*}
\begin{aligned}
    \Phi_2^r(x;\lambda_+) e^{-i(x-x_0^r)\lambda}
        &= \frac{e^{-i(x-x_0^r)\lambda}}{b_1(\lambda_+)}
           \bigl[\Phi_1^l(x;\lambda) - a(\lambda_+)\Phi_1^r(x;\lambda_+)\bigr] \\
        &= \frac{e^{-i(x-x_0^r)\lambda}}{b_1(\lambda_+)}
           \bigl[\Phi_1^l(x;\lambda) + i\,a(\lambda_+)\Phi_2^r(x;\lambda_-)\bigr] \\
        &= -\,i\,e^{-2i(x-x_0^r)\lambda}
           \frac{\Phi_1^l(x;\lambda)e^{i(x-x_0^r)\lambda}}{a(\lambda_-)}
           + \frac{i\,b_1(\lambda_-)}{a(\lambda_-)}\,
             \Phi_2^r(x;\lambda_-) e^{-i(x-x_0^r)\lambda},
\end{aligned}
\end{equation*}
where the first equality follows from~\eqref{eq:Phirl2}, the second from the jump condition of $\Phi^r(x;\lambda)$ in~\eqref{jump:Phir}, and the last from the jump relations of $a(\lambda)$ and $b_1(\lambda)$ in~\eqref{eq:ba-jumps}.

For $\lambda \in \Sigma_1^r \cap \Sigma_1^l$, by the jump conditions of the Jost solutions in~\eqref{jump:Phir}, together with the relation~\eqref{eq:Phirl1}, we obtain
\begin{equation*}
\begin{aligned}
    \frac{\Phi_1^l(x;\lambda_+)}{a(\lambda_+)} e^{i(x-x_0^r)\lambda}
        &= \frac{-i\,e^{i(x-x_0^r)\lambda}}{a(\lambda_+)}\,\Phi_2^l(x;\lambda_-) \\
        &= \frac{-i\,e^{i(x-x_0^r)\lambda}}{a(\lambda_+)a(\lambda_-)}
           \bigl[b_1^*(\lambda_-)\Phi_1^l(x;\lambda_-) + \Phi_2^r(x;\lambda_-)\bigr] \\
        &= \frac{-i\,b_1^*(\lambda_-)}{a(\lambda_+)}
           \frac{\Phi_1^l(x;\lambda_-)}{a(\lambda_-)} e^{i(x-x_0^r)\lambda}
           - \frac{i\,e^{2i(x-x_0^r)\lambda}}{a(\lambda_+)a(\lambda_-)}
             \Phi_2^r(x;\lambda_-) e^{-i(x-x_0^r)\lambda},
\end{aligned}
\end{equation*}
and
\begin{equation*}
\begin{aligned}
    \Phi_2^r(x;\lambda_+) e^{-i(x-x_0^r)\lambda}
        &= -\,i\,\Phi_2^l(x;\lambda_-) e^{-i(x-x_0^r)\lambda} \\
        &= \frac{-i\,e^{-i(x-x_0^r)\lambda}}{a(\lambda_-)}
           \bigl[\Phi_1^l(x;\lambda_-) - b_1(\lambda_-)\Phi_2^r(x;\lambda_-)\bigr] \\
        &= -\,i\,e^{-2i(x-x_0^r)\lambda}
           \frac{\Phi_1^l(x;\lambda_-)}{a(\lambda_-)} e^{i(x-x_0^r)\lambda}
           + \frac{i\,b_1(\lambda_-)}{a(\lambda_-)}\,
             \Phi_2^r(x;\lambda_-) e^{-i(x-x_0^r)\lambda}.
\end{aligned}
\end{equation*}

For $\lambda \in \Sigma_1^l \setminus \Sigma_1^r$, the function $\Phi_2^r(x;\lambda)$ has no jump, whereas for $\Phi_1^l(x;\lambda)$ we have
\begin{equation*}
\begin{aligned}
    \frac{\Phi_1^l(x;\lambda_+) e^{i(x-x_0^r)\lambda}}{a(\lambda_+)}
    &= \frac{e^{i(x-x_0^r)\lambda}}{a(\lambda_+)}
        \left[\frac{a(\lambda_+)\Phi_2^l(x;\lambda_+) - \Phi_2^r(x;\lambda)}{b_1^*(\lambda_+)}\right] \\
    &= \frac{-e^{i(x-x_0^r)\lambda}}{a(\lambda_+)b_1^*(\lambda_+)}
        \left[i\,a(\lambda_+)\Phi_1^l(x;\lambda_-) + \Phi_2^r(x;\lambda)\right] \\
    &= \frac{-i\,b_1^*(\lambda_-)}{a(\lambda_+)}
        \frac{\Phi_1^l(x;\lambda_-) e^{i(x-x_0^r)\lambda}}{a(\lambda_-)}
        - \frac{ie^{2i(x-x_0^r)\lambda}}{\,a(\lambda_+)a(\lambda_-)}\,
          \Phi_2^r(x;\lambda)e^{-i(x-x_0^r)\lambda}.
\end{aligned}
\end{equation*}
Here, the first equality follows from~\eqref{eq:Phirl2}, the second from the jump condition of $\Phi^l(x;\lambda)$ in~\eqref{jump:Phir}, and the last from the jump relations of $a(\lambda)$ and $b_1^*(\lambda)$ in~\eqref{eq:ba-jumps}.
The jump conditions on the real line are obtained through standard analysis, while those on the lower half-plane can be derived either from the symmetry relations or by direct computation.

The normalization condition is a direct consequence of the asymptotic expansions of $\Phi^l(x;\lambda)$ and $\Phi^r(x;\lambda)$ in~\eqref{expansion:Phi}, together with the asymptotic behavior of $a(\lambda)$ as $\lambda \to \infty$ in~\eqref{expansion:a}.
Moreover, the symmetry condition follows directly from the symmetry properties of the Jost solutions given in~\eqref{sym:Phi-pm} and~\eqref{sym:Phi-conjuate}.
\end{proof}
Let $r(\lambda):=\frac{b(\lambda)}{a(\lambda)}$ for $\lambda\in\R$ and introduce the following auxiliary function $h(\lambda),~\lambda\in\C_+$:
\begin{equation}\label{eq:h-lambda}
\begin{aligned}
h(\lambda)
&= \exp\Biggl\{
   -i(x_0^l - x_0^r)\lambda
   + \frac{1}{2\pi i}\Biggl[
      \int_{\Sigma_1^r \setminus \Sigma_1^l}
        \frac{\ln\!\left(-\tfrac{b_1(\zeta_-)}{b_1(\zeta_+)}\right)}{\zeta-\lambda}\, d\zeta
    + \int_{\Sigma_1^l \cap \Sigma_1^r}
        \frac{\ln\!\left(-\tfrac{b_1(\zeta_-)}{b_1^*(\zeta_-)}\right)}{\zeta-\lambda}\, d\zeta \\[4pt]
&\qquad\quad
    + \int_{\Sigma_1^l \setminus \Sigma_1^r}
        \frac{\ln\!\left(-\tfrac{b_1^*(\zeta_+)}{b_1^*(\zeta_-)}\right)}{\zeta-\lambda}\, d\zeta
    + \int_{\R}
        \frac{\ln\!\bigl(1-|r(\zeta)|^2\bigr)}{\zeta-\lambda}\, d\zeta
    + \int_{\Sigma_2^l \setminus \Sigma_2^r}
        \frac{\ln\!\left(-\tfrac{b_1(\zeta_-)}{b_1(\zeta_+)}\right)}{\zeta-\lambda}\, d\zeta \\[4pt]
&\qquad\quad
    + \int_{\Sigma_2^l \cap \Sigma_2^r}
        \frac{\ln\!\left(-\tfrac{b_1(\zeta_-)}{b_1^*(\zeta_-)}\right)}{\zeta-\lambda}\, d\zeta
    + \int_{\Sigma_2^r \setminus \Sigma_2^l}
        \frac{\ln\!\left(-\tfrac{b_1^*(\zeta_+)}{b_1^*(\zeta_-)}\right)}{\zeta-\lambda}\, d\zeta
   \Biggr]
\Biggr\}.
\end{aligned}
\end{equation}
By using the auxiliary function $h(\lambda)$, we decompose the function $a(\lambda):=a_1(\lambda)a_2(\lambda)$ as follows:
\begin{equation}\label{def:a1a2}
    \begin{aligned}
        a_1(\lambda)&=(a(\lambda)/h(\lambda))^{1/2},&&\lambda\in\C_+,\\
        a_2(\lambda)&=(a(\lambda)h(\lambda))^{1/2},&&\lambda\in\C_+.\\
    \end{aligned}
\end{equation}
\begin{pro}\label{pro:a12}
The functions $a_1(\lambda)$ and $a_2(\lambda)$ defined in~\eqref{def:a1a2} satisfy the following properties:
\begin{enumerate}
    \item The function $a_1(\lambda)$ is defined for $\lambda \in \overline{\mathbb{C}}_+ \setminus \{0,\, i\eta_1^l,\, i\eta_2^l\}$ and is analytic for $\lambda \in \mathbb{C}_+ \setminus \Sigma_1^l$. 
Similarly, the function $a_2(\lambda)$ is defined for $\lambda \in \overline{\mathbb{C}}_+ \setminus \{0,i\eta_1^r,\, i\eta_2^r \}$ and is analytic for $\lambda \in \mathbb{C}_+ \setminus \Sigma_1^r$. 
    
    \item In particular, for $\lambda \in \Sigma_1^r \cap \Sigma_1^l$, we have
    \begin{equation*}
        \frac{a_2(\lambda_+)}{a_1(\lambda_+)}
        = -\,\frac{a_2(\lambda_-)b_1(\lambda_-)}{a_1(\lambda_-)b_1^*(\lambda_-)}, 
        \qquad \lambda \in \Sigma_1^l \cap \Sigma_1^r.
    \end{equation*}
    Moreover, for $\lambda \in \R$, it holds that
    \begin{equation*}
        \frac{1}{|a_1(\lambda)|^2} = 1 - |r(\lambda)|^2, 
        \qquad 
        |a_2(\lambda)|^2 = 1, 
        \qquad \lambda \in \R.
    \end{equation*}

    \item As $\lambda \to \infty$, the functions $a_1(\lambda)$ and $a_2(\lambda)$ admit the following asymptotic expansions:
    \begin{equation}\label{asym:a1a2}
        a_1(\lambda) = e^{i(x_0^l - x_0^r)\lambda} + \mathcal{O}(\lambda^{-1}),
        \qquad 
        a_2(\lambda) = 1 + \mathcal{O}(\lambda^{-1}),
        \qquad \lambda \to \infty.
    \end{equation}
   \item For each endpoint $\lambda = i\eta_j^s$, the modified scattering data 
$a_1(\lambda)$ and $a_2(\lambda)$ admit the following local behaviour:
\begin{equation}\label{endpoints:a1a2}
a_1(\lambda)=
\begin{cases}
\mathcal{O}(1), & \lambda \to i\eta_2^r,\\[0.3ex]
(\lambda-i\eta_2^l)^{-\frac{1}{4}}, & \lambda \to i\eta_2^l,\\[0.3ex]
\mathcal{O}(1), & \lambda \to i\eta_1^r,\\[0.3ex]
(\lambda-i\eta_1^l)^{-\frac{1}{4}}, & \lambda \to i\eta_1^l,
\end{cases}
\qquad
a_2(\lambda)=
\begin{cases}
(\lambda-i\eta_2^r)^{-\frac{1}{4}}, & \lambda \to i\eta_2^r,\\[0.3ex]
\mathcal{O}(1), & \lambda \to i\eta_2^l,\\[0.3ex]
(\lambda-i\eta_1^r)^{-\frac{1}{4}}, & \lambda \to i\eta_1^r,\\[0.3ex]
\mathcal{O}(1), & \lambda \to i\eta_1^l.
\end{cases}
\end{equation}
\end{enumerate}
\end{pro}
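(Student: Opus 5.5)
The plan is to read every property of $a_1,a_2$ off the Sokhotski--Plemelj formula applied to $\log h$. Write $h(\lambda)=e^{-i(x_0^l-x_0^r)\lambda}\exp\{\mathcal C f(\lambda)\}$, where $\mathcal C$ is the Cauchy transform over $\Gamma:=\R\cup\Sigma^l\cup\Sigma^r$ and $f$ is the piecewise density in \eqref{eq:h-lambda}. Then $h$ is analytic off $\Gamma$ and satisfies $\log h_+-\log h_-=f$ on each piece.

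\textbf{Step 1: analyticity and jumps.} We have $a_1^2=a/h$ and $a_2^2=ah$. Using Lemma~\ref{lem:ab} (jump relations \eqref{eq:ba-jumps}, \eqref{jump:S}), compute the jumps of $a$ on each arc.
\begin{itemize}
\item On $\Sigma_1^r\setminus\Sigma_1^l$, the relation $a(\lambda_+)=ib_1(\lambda_-)$ together with $h_+/h_-=-b_1(\lambda_-)/b_1(\lambda_+)=-b_1(\lambda_-)/(ia(\lambda_-))$ gives $(a/h)_+=(a/h)_-$. Hence $a_1$ has no jump there, while $a_2^2$ picks up the full jump $(a_+/a_-)^2$ up to sign.
\item On $\Sigma_1^l\setminus\Sigma_1^r$, the jump of $b_1^*$ plays the same role and leaves $ah$ continuous, so $a_2$ has no jump there.
\item On $\Sigma_1^l\cap\Sigma_1^r$, combine $a(\lambda_+)=a^*(\lambda_-)$ with $h_+/h_-=-b_1(\lambda_-)/b_1^*(\lambda_-)$. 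This yields the stated identity for $(a_2/a_1)_\pm$, since $(a_2/a_1)^2=h^2$ and the identity is just $h_+/h_-$, with the sign fixed by the branch of the square root.
\item On $\R$, $h_+/h_-=1-|r|^2$. Together with the Schwarz symmetry of $a$ and $\det\mathbf S=1$, which gives $|a|^2(1-|r|^2)=1$, a standard scalar RH argument of the kind used for $\delta$ in NLS yields $|h|^2=1/(1-|r|^2)$ on $\R$. It follows that $|a_2|^2=|a||h|=1$ and $|a_1|^{-2}=|h|/|a|=1-|r|^2$.
\end{itemize}
The contributions from the lower-half-plane contours are designed so that the symmetry $\lambda\mapsto-\bar\lambda$ holds. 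Because $a$ has no zeros (Assumption~\ref{assumption}), the square roots are single-valued on the simply connected slit domains, which gives item 1.

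\textbf{Step 2: behaviour at infinity.} By \eqref{expansion:a}, $a=e^{i(x_0^l-x_0^r)\lambda}(1+\mathcal O(\lambda^{-1}))$. The Cauchy integral is $\mathcal O(\lambda^{-1})$ provided $f$ is integrable, and on $\R$ integrability follows from $r=\mathcal O(\lambda^{-4})$ in \eqref{expansion:b}. Therefore $h=e^{-i(x_0^l-x_0^r)\lambda}(1+\mathcal O(\lambda^{-1}))$. This makes $ah\to1$, so $a_2\to1$, and $a/h\to e^{2i(x_0^l-x_0^r)\lambda}$, so $a_1\to e^{i(x_0^l-x_0^r)\lambda}$, which gives \eqref{asym:a1a2}. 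The branches are fixed by this normalization.

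\textbf{Step 3: endpoints (main obstacle).} Near an endpoint $i\eta_j^s$, the density $f$ tends to $\ln$ of the limiting ratio computed in Lemma~\ref{lem:ab-endpoints} (\eqref{eq:limit1}, \eqref{eq:limit2}). These limits are finite and equal to $\pm i$ times a ratio of determinants. Standard Cauchy-integral endpoint analysis (Muskhelishvili) then gives $e^{\mathcal C f}\sim(\lambda-i\eta_j^s)^{\pm\log(f(i\eta_j^s))/2\pi i}$. A constant factor $\pm i$ contributes exponent $\pm1/4$. For this to work, the determinant ratios must equal $1$ at the endpoint, which should follow from the continuity of $\widehat\Phi$ there.

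Combining this with the at most fourth-root singularity of $a$ (Lemma~\ref{lem:ab-endpoints}), I expect the $h$-factor to cancel the singularity of $a$ at the $r$-endpoints in $a/h$ and at the $l$-endpoints in $ah$, leaving exactly $(\lambda-i\eta_j)^{-1/4}$ in the other factor. This is \eqref{endpoints:a1a2}.

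The delicate part is bookkeeping the branch of $\ln$ and the orientation of each arc so that the exponents come out as $-1/4$ with the correct sign. I would check this first in case (iii) and then argue that the other configurations follow analogously.
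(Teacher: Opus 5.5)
Your route is the same as the paper's: Plemelj jumps of $h$ combined with \eqref{eq:ba-jumps} on each arc, $h\sim e^{-i(x_0^l-x_0^r)\lambda}$ at infinity, and a local Cauchy-integral analysis at the endpoints fed by \eqref{eq:limit1}--\eqref{eq:limit2}. However, the endpoint step has a genuine gap. Your mechanism is that the density tends to $\ln$ of ``$\pm i$ times a determinant ratio equal to $1$''. This only covers an endpoint where a single arc with a single density terminates (the paper's Case~1). Even there, the ratio in \eqref{eq:limit1} equals $1$ because the \emph{other} background's column ($\Phi_1^l$ at $i\eta_2^r\notin\Sigma_1^l$, or $\Phi_2^r$ at $i\eta_j^l\notin\Sigma_1^r$) is analytic through that point; continuity of $\widehat\Phi$ only makes the limit exist.

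The trouble is at a junction, i.e.\ an endpoint of one band lying inside the other, e.g.\ $i\eta_2^r\in\Sigma_1^l$ in configuration (iii), the case you plan to check first. There two arcs with different densities meet: $\ln(-b_1(\zeta_-)/b_1^*(\zeta_-))$ on $\Sigma_1^l\cap\Sigma_1^r$ and $\ln(-b_1^*(\zeta_+)/b_1^*(\zeta_-))$ on $\Sigma_1^l\setminus\Sigma_1^r$. The other column genuinely jumps there ($\Phi^l(\lambda_+)=\Phi^l(\lambda_-)(-i\sigma_1)$), so the determinant ratios are not $1$, and the limits in \eqref{eq:limit2} carry no factor $\pm i$ at all. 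Each density then has a non-universal endpoint value, and the local exponent of $h$ is $\tfrac{1}{2\pi i}$ times the \emph{difference} of the two limiting densities. The missing step is to evaluate this difference using \eqref{eq:limit1}, \eqref{eq:limit2} and the band jumps \eqref{jump:Phir}, and show it gives exactly $-\tfrac14$ at $r$-endpoints and $+\tfrac14$ at $l$-endpoints. This is the paper's Case~2. Without it, item~4 is unproved at junctions, e.g.\ $a_1=\mathcal O(1)$ and $a_2\sim(\lambda-i\eta_2^r)^{-1/4}$ in configuration (iii).

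There are also two smaller errors.
\begin{itemize}
\item On $\R$ your value of $|h|$ is inverted. Since $|a|^2=(1-|r|^2)^{-1}$, your value $|h|^2=(1-|r|^2)^{-1}$ would give $|a_2|^2=|a||h|=(1-|r|^2)^{-1}$ and $|a_1|=1$, the opposite of item~2. The relevant boundary value is $h_+$, from $\C_+$ where $a_1,a_2$ live. The real density on $\R$, the unimodular factor $e^{-i(x_0^l-x_0^r)\lambda}$, and the pairing of the $\Sigma_1$ and $\Sigma_2$ densities (with consistent branches of the logarithms) give $h^*=1/h$. Hence $h_-=1/\overline{h_+}$ and $|h_+|^2=h_+/h_-=1-|r|^2$, which is the paper's $hh^*=1-|r|^2$. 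The value $(1-|r|^2)^{-1}$ is $|h_-|^2$.
\item $\C_+\setminus\Sigma_1^l$ is not simply connected, since the slit does not reach $\R$. So nonvanishing of $a$ alone does not make $(a/h)^{1/2}$ and $(ah)^{1/2}$ single-valued. The paper is silent on this too, but your stated reason is false.
\end{itemize}
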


\begin{proof}
    Observe that $h(\lambda)$ is defined via the Cauchy-type integral representation, from which the following jump relations hold:
    \begin{equation}
    \label{Jumps:h}
\begin{aligned}
h(\lambda_+) &= -h(\lambda_-)
\begin{cases}
\dfrac{b_1(\lambda_-)}{b_1(\lambda_+)}, & \lambda \in \Sigma_1^r \setminus \Sigma_1^l,\\[3pt]
\dfrac{b_1(\lambda_-)}{b_1^*(\lambda_-)}, & \lambda \in \Sigma_1^r \cap \Sigma_1^l,\\[3pt]
\dfrac{b_1^*(\lambda_+)}{b_1^*(\lambda_-)}, & \lambda \in \Sigma_1^l \setminus \Sigma_1^r,
\end{cases}\\[6pt]
h(\lambda)h^*(\lambda) &= 1 - |r(\lambda)|^2, \qquad \lambda \in \R.
\end{aligned}
\end{equation}
    Moreover, as $\lambda \to \infty$, we have
    \[
        h(\lambda) = e^{-i(x_0^l - x_0^r)\lambda} + \mathcal{O}(\lambda^{-1}), 
        \qquad \lambda \to \infty.
    \]
    
    Combining these jump relations with those of $a(\lambda)$ and $b_1(\lambda)$ in~\eqref{eq:ba-jumps}, one obtains
    \[
    \begin{aligned}
        &\frac{a(\lambda_+)}{h(\lambda_+)}
        = -\,\frac{i b_1(\lambda_-)}{h(\lambda_-)}\frac{b_1(\lambda_+)}{b_1(\lambda_-)}
        = \frac{a(\lambda_-)}{h(\lambda_-)}, 
        && \lambda \in \Sigma_1^r \setminus \Sigma_1^l, \\[3pt]
        &a(\lambda_+)h(\lambda_+)
        = i b_1^*(\lambda_-)h(\lambda_-)\frac{b_1^*(\lambda_+)}{b_1^*(\lambda_-)}
        = a(\lambda_-)h(\lambda_-), 
        && \lambda \in \Sigma_1^l \setminus \Sigma_1^r.
    \end{aligned}
    \]
    Consequently, $a_1(\lambda)$ has no jump for $\lambda \in \Sigma_1^r \setminus \Sigma_1^l$, and $a_2(\lambda)$ has no jump for $\lambda \in \Sigma_1^l \setminus \Sigma_1^r$.
    
   \noindent {\bf{The proof of 4.}} By Lemma~\ref{lem:ab-endpoints}, the scattering coefficient $a(\lambda)$ exhibits a fourth--root singularity at each endpoint $i\eta_j^s$, $j=1,2$.
In order to study the local behaviour of $a_1(\lambda)$ and $a_2(\lambda)$ at the endpoints, it therefore suffices to determine the local behaviour of $h(\lambda)$. 

For $\lambda \in \mathbb{D}_{\epsilon}(i\eta_2^r)$, a sufficiently small disk centered at $i\eta_2^r$ with radius $\epsilon$, we consider two cases:
$i\eta_2^r \notin \Sigma_1^l$ and $i\eta_2^r \in \Sigma_1^l$.

\medskip
\noindent
\textbf{Case 1:} $i\eta_2^r \notin \Sigma_1^l$.

By the definition of $h(\lambda)$ in \eqref{eq:h-lambda}, the only interval of integration is 
$\Sigma_1^r \setminus \Sigma_1^l$, which can be written as
\[
\Sigma_1^r \setminus \Sigma_1^l = (i\eta, i\eta_2^r),
\]
where $\eta$ depends on the configuration in Figure~\ref{fig:bands}.
Hence,
\begin{equation*}
\begin{aligned}
\int_{\Sigma_1^r \setminus \Sigma_1^l}
\frac{\ln\!\left(-\tfrac{b_1(\zeta_-)}{b_1(\zeta_+)}\right)}{\zeta-\lambda}\, d\zeta
&=
\ln\!\left(-\tfrac{b_1(i\eta_{2,-}^r)}{b_1(i\eta_{2,+}^r)}\right)
\int_{i\eta}^{i\eta_2^r} \frac{d\zeta}{\zeta-\lambda} \\
&\quad +
\int_{i\eta}^{i\eta_2^r}
\left[
\ln\!\left(-\tfrac{b_1(\zeta_-)}{b_1(\zeta_+)}\right)
-
\ln\!\left(-\tfrac{b_1(i\eta_{2,-}^r)}{b_1(i\eta_{2,+}^r)}\right)
\right]
\frac{d\zeta}{\zeta-\lambda}.
\end{aligned}
\end{equation*}
For the second term in the above equation, the appendix of 
\cite{grava_direct_nodate} implies that it is 
$o\!\left(\log|\lambda - i\eta_2^r|\right)$ 
as $\lambda \to i\eta_2^r$.

\medskip
\noindent
\textbf{Case 2:} $i\eta_2^r \in \Sigma_1^l$.

In this case, $i\eta_2^r$ is a common endpoint of 
$\Sigma_1^r \cap \Sigma_1^l$ and $\Sigma_1^l \setminus \Sigma_1^r$.
We decompose the integrals as follows:
\begin{equation*}
\begin{aligned}
\int_{\Sigma_1^l \cap \Sigma_1^r}
\frac{\ln\!\left(-\tfrac{b_1(\zeta_-)}{b_1^*(\zeta_-)}\right)}{\zeta-\lambda}\, d\zeta
&=
\ln\!\left(-\tfrac{b_1(\eta_{2,-}^r)}{b_1(\eta_{2,+}^r)}\right)
\int_{i\eta_1^l}^{i\eta_2^r} \frac{d\zeta}{\zeta-\lambda} \\
&\quad +
\int_{i\eta_1^l}^{i\eta_2^r}
\left[
\ln\!\left(-\tfrac{b_1(\zeta_-)}{b_1(\zeta_+)}\right)
-
\ln\!\left(-\tfrac{b_1(i\eta_{2,-}^r)}{b_1(i\eta_{2,+}^r)}\right)
\right]
\frac{d\zeta}{\zeta-\lambda},
\end{aligned}
\end{equation*}

and
\begin{equation*}
\begin{aligned}
\int_{\Sigma_1^l \setminus \Sigma_1^r}
\frac{\ln\!\left(-\tfrac{b_1^*(\zeta_+)}{b_1^*(\zeta_-)}\right)}{\zeta-\lambda}\, d\zeta
&=
\ln\!\left(-\tfrac{b_1^*(\eta_{2,+}^r)}{b_1^*(\eta_{2,-}^r)}\right)
\int_{i\eta_2^r}^{i\eta_2^l} \frac{d\zeta}{\zeta-\lambda} \\
&\quad +
\int_{i\eta_2^r}^{i\eta_2^l}
\left[
\ln\!\left(-\tfrac{b_1^*(\zeta_+)}{b_1^*(\zeta_-)}\right)
-
\ln\!\left(-\tfrac{b_1^*(i\eta_{2,+}^r)}{b_1^*(i\eta_{2,-}^r)}\right)
\right]
\frac{d\zeta}{\zeta-\lambda}.
\end{aligned}
\end{equation*}
Combining \eqref{eq:limit1}, \eqref{eq:limit2}, and the jump condition in \eqref{jump:Phir}, we obtain
\[
\begin{aligned}
&\ln\!\left(-\frac{b_1(\eta_{2,-}^r)}{b_1(\eta_{2,+}^r)}\right)
-
\ln\!\left(-\frac{b_1^*(\eta_{2,+}^r)}{b_1^*(\eta_{2,-}^r)}\right) \\
&=
\ln\!\left(
-i
\frac{
\det\!\left[\widehat{\Phi}_1^r(i\eta_{2}^r),\Phi_1^l(i\eta_{2,-}^r)\right]
}{
\det\!\left[\widehat{\Phi}_1^r(i\eta_{2}^r),\Phi_1^l(i\eta_{2,+}^r)\right]
}
\right) 
-
\ln\!\left(
-
\frac{
\det\!\left[\widehat{\Phi}_2^r(i\eta_{2}^l),{\Phi}_2^l(i\eta_{2,+}^l)\right]
}{
\det\!\left[\widehat{\Phi}_2^r(i\eta_{2}^l),{\Phi}_2^l(i\eta_{2,-}^l)\right]
}
\right)
= -\frac{1}{4}.
\end{aligned}
\]

Consequently, as $\lambda \to i\eta_2^r$, the function $h(\lambda)$ admits the expansion
\[
h(\lambda) = (\lambda - i\eta_2^r)^{-\frac{1}{4}} e^{h_0(\lambda)},
\]
where $h_0(\lambda)$ remains bounded as $\lambda \to i\eta_2^r$.

More precisely, we have the following local behaviour.
\[
\begin{aligned}
h(\lambda) &= \mathcal{O}\!\left((\lambda - i \eta_2^r)^{-\frac14}\right),
&\qquad & \lambda \to i \eta_2^r,
&\qquad
h(\lambda) &= \mathcal{O}\!\left((\lambda - i \eta_2^l)^{\frac14}\right),
&\qquad & \lambda \to i \eta_2^l, \\[0.6ex]
h(\lambda) &= \mathcal{O}\!\left((\lambda - i \eta_1^r)^{-\frac14}\right),
&\qquad & \lambda \to i \eta_1^r,
&\qquad
h(\lambda) &= \mathcal{O}\!\left((\lambda - i \eta_1^l)^{\frac14}\right),
&\qquad & \lambda \to i \eta_1^l .
\end{aligned}
\]
By the definition of $a_1(\lambda),a_2(\lambda)$ in \eqref{def:a1a2}, it follows the local behaviour in \eqref{endpoints:a1a2}.
\end{proof}

Introduce the vector-valued function $X(x;\lambda)$ by
\begin{equation}
    X(x;\lambda)
    =
    \begin{bmatrix}
        1 & 1
    \end{bmatrix}
    M(x;\lambda)
    \begin{cases}
        a_2(\lambda)^{\sigma_3}, & \lambda \in \C_+, \\[2pt]
        a_2^*(\lambda)^{-\sigma_3}, & \lambda \in \C_-.
    \end{cases}
\end{equation}

Define the following reflection coefficients $r_1(\lambda),r_2(\lambda)$ and $\rho(\lambda)$ by
\begin{equation}\label{def:r1r2}
\begin{aligned}
    &r_1(\lambda) =\frac{a_2(\lambda_-)}{a_1(\lambda_-)}\frac{e^{-2ix_0^r\lambda}}{a_2(\lambda_-)a_1(\lambda_+)-ib_1^*(\lambda_-)}, &&\lambda\in\Sigma_1^l,\\
    &r_2(\lambda) =\frac{a_1(\lambda_-)}{a_2(\lambda_-)}\frac{e^{2ix_0^r\lambda}}{a_1(\lambda_-)a_2(\lambda_+)+ib_1(\lambda_-)}, &&\lambda\in\Sigma_1^r,\\
    &\rho(\lambda)=\frac{b(\lambda)}{a_1(\lambda)a_2^*(\lambda)}e^{-2ix_0^r\lambda},&&\lambda\in\R.
\end{aligned}
\end{equation}



\subsection*{The proof of the lemma \ref{lem:r1r2}}
\begin{proof}
    By the proposition of $a_1(\lambda),a_2(\lambda)$ in \ref{pro:a12} and the local behavior of $b_1(\lambda)$ in lemma~\ref{lem:ab-endpoints}, it follows that $r_1(\lambda)$ exhibits square-root behaviour at $i\eta_1^l,i\eta_2^l$. 
    If an endpoint of $\Sigma_1^r$ lies on $\Sigma_1^l$, such as $i\eta_2^r$ in pattern~$\mathrm{(iii)}$ of Figure~\ref{fig:bands}, then $r_1(\lambda)$ is bounded at this point.

For $\lambda \in \Sigma_1^l \setminus \Sigma_1^r$, by the jump conditions for $a(\lambda)$ in~\eqref{eq:ba-jumps} and those for $h(\lambda)$ in~\eqref{Jumps:h}, it follows that
\[
a_2(\lambda_-)a_1(\lambda_+) = -i\, b_1^*(\lambda_-).
\]
Consequently, by the jump condition of $b_1^*(\lambda)$ in \eqref{eq:ba-jumps}, $r_1(\lambda)$ can be rewritten as
\[
r_1(\lambda)
= \frac{e^{-2i\lambda x_0^r}}{2\,a_1(\lambda_-)a_1(\lambda_+)}.
\]

Notice that for $\lambda\in\mathbb{R}$ and as $\epsilon\to0^+$, we have
\(
\lim_{\epsilon\to 0^+} h(\lambda+i\epsilon)=\frac{\exp({\alpha})}{a(\lambda)},
\)
where $\alpha$ is the exponential part in \eqref{eq:h-lambda} without the term $\int_{\mathbb{R}}
\frac{\ln(1-|r(\zeta)|^2)}{\zeta-\lambda}\,d\zeta$. Thus, by the local behaviour of $b(\lambda)$ and
$a(\lambda)$ at $\lambda=0$ in lemma \ref{lem:ab}, it follows that $\rho(\lambda)$ is regular at $\lambda=0$.

\end{proof}
Consequently, we formulate the RH problem \ref{RHP:X} for $X(x;\lambda)$ with $t=0$, which is obtained from RH problem~\ref{RHP:M} by substituting $r_1(\lambda)$ and $r_2(\lambda)$ into the corresponding jump matrices.

\section{Time Evolution and Inverse Scattering}\label{section:inverse}
Now consider $\theta(x,t;\lambda)=x\lambda+4t\lambda^{3}$ in the RH problem~\ref{RHP:X}. 
In this section, we show that the RH problem $X(x,t;\lambda)$ admits a unique solution. 
Moreover, as $t$ evolves, the solution of the KdV equation with initial data~\eqref{initial} 
can be recovered from $X(x,t;\lambda)$.

\begin{lem}\label{lem:vanishing}
	If the spectral function $\rho(\lambda)\in L^2(\mathbb{R})$, $r_1(\lambda)\in L^2(\Sigma_1^l)$, and $r_2(\lambda)\in L^2(\Sigma_1^r)$, then the RH problem for $X(x,t;\lambda)$ admits a unique solution.
\end{lem}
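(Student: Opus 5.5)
Uniqueness and existence will both come from Zhou's vanishing lemma, applied through the Fredholm theory of the associated singular integral equation. First we pass to a $2\times2$ matrix version of RH problem~\ref{RHP:X}, with the same jump matrix $V$ and normalization $I+\mathcal{O}(\lambda^{-1})$. This is legitimate because the vector problem is the first row of the matrix problem times $\begin{bmatrix}1&1\end{bmatrix}$, up to the symmetry $X(-\lambda)=X(\lambda)\sigma_1$. The one subtlety is the possible simple pole at $\lambda=0$; following \cite{girotti_rigorous_2021}, it is killed by the symmetry condition, which forces the residue to be proportional to $\begin{bmatrix}1&-1\end{bmatrix}$ in the vector setting.

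Writing $V=(I-w_-)^{-1}(I+w_+)$ on $\Sigma:=\R\cup\Sigma^l\cup\Sigma^r$, the problem is equivalent to the singular integral equation $\mu=I+\mathcal{C}_w\mu$ in $L^2(\Sigma)$. Here $\mathcal{C}_w f=C_+(fw_-)+C_-(fw_+)$. We factorize $V$ as follows:
\begin{itemize}
\item on the pure bands $\Sigma_j^s\setminus\Sigma_j^{s'}$ and on $\R$, use the triangular factorizations;
\item on the overlaps $\Sigma^l\cap\Sigma^r$, use an LDU factorization, which requires $1+r_1r_2\neq0$.
\end{itemize}
By the hypotheses $\rho\in L^2(\R)$, $r_1\in L^2(\Sigma_1^l)$, $r_2\in L^2(\Sigma_1^r)$, together with the boundedness of $e^{\pm2i\theta}$ on $\Sigma$ (note $\theta$ is real on $\R$ and purely imaginary with bounded modulus on the compact bands), the map $I-\mathcal{C}_w$ is a bounded operator. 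It is a compact perturbation of an invertible operator, hence Fredholm of index zero. It therefore suffices to prove that the homogeneous problem, with $X_0=\mathcal{O}(\lambda^{-1})$ at infinity, has only the trivial solution.

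For the vanishing step, let $X_0$ be a solution of the homogeneous matrix problem and set $H(\lambda)=X_0(\lambda)X_0^\dagger(\bar\lambda)$ for $\lambda\in\C_+$. This $H$ is analytic in $\C_+\setminus\Sigma_1^l\cup\Sigma_1^r$ and is $\mathcal{O}(\lambda^{-2})$ at infinity. By Cauchy's theorem, $\int_{\R}X_{0,+}X_{0,-}^\dagger\,d\lambda$ is equal to the sum of contributions from both sides of the vertical bands, which come in via the Schwarz symmetry $X^*(\lambda)=X(\lambda)\sigma_1$. On $\R$ we have $X_{0,+}X_{0,-}^\dagger=X_{0,-}VX_{0,-}^\dagger$. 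The real-line jump is Hermitian up to the conjugation structure: $V+V^\dagger$ is positive definite when $|\rho|<1$, which holds since $1-|\rho|^2=|a_1|^{-2}>0$ by Proposition~\ref{pro:a12}. It follows that the real part of the $\R$-integral controls $\|X_{0,-}\|_{L^2(\R)}^2$.

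The hard part is to show that the band contributions have a sign, or cancel. On $\Sigma_1^r\setminus\Sigma_1^l$ the jump is upper triangular with off-diagonal entry $-2ir_2e^{-2i\theta}$. Using $\lambda=i s$, $d\lambda=i\,ds$, and the reality structure of $r_2$ on the imaginary axis, the integrand should reduce to $\pm2r_2(is)e^{2s x+\dots}|X_{0,1}|^2$. For this we need $r_2(is)$ to be real of one sign, which we would verify from $r_2=e^{-2i\lambda x_0^r}/(2a_2(\lambda_-)a_2(\lambda_+))$ and the symmetry $a(\lambda_+)=a^*(\lambda_-)$; analogously for $r_1$ on $\Sigma_1^l$. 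On the overlaps we would try the same with the full $2\times2$ block, aiming to show that $V+V^\dagger$ restricted there is semi-definite. If a sign fails there, the fallback is to lens-open in the band region so that only the positive real-line jump remains. With every term nonnegative, we get $X_{0,-}\equiv0$ on $\R$, hence $X_0\equiv0$ by analytic continuation, and uniqueness follows by taking the ratio of two solutions (the determinant is $1$).
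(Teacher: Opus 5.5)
\paragraph{Verdict: genuine gap.} Your skeleton matches the paper's. It is the Fredholm alternative plus a Zhou-type vanishing argument with $H=N\,(N^*)^{t}$ for a homogeneous solution $N$ (your $X_0$), with positivity on $\mathbb{R}$ from $V+V^\dagger=\operatorname{diag}\bigl(2(1-|\rho|^2),2\bigr)$. But the only non-routine step, the contribution of the bands $\Sigma_1^l\cup\Sigma_1^r$, is left as ``should reduce'', ``we would verify'', ``we would try''. So the vanishing lemma is not proved.

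\paragraph{Your fallbacks do not work.}
Lens opening cannot remove these jumps. They live on segments of $i\mathbb{R}_+$, and on $\Sigma_1^r\setminus\Sigma_1^l$ and $\Sigma_1^l\setminus\Sigma_1^r$ they are already unipotent triangular, so a deformation only moves them.

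Semi-definiteness of $V+V^\dagger$ on the overlaps is the wrong quantity. For $\lambda\in i\mathbb{R}_+$ the point $\bar\lambda=-\lambda$ lies on the opposite band. Once $N^*=N\sigma_1$ is used, $H=N\sigma_1N^{t}=2N_1N_2$ is bilinear, not Hermitian. On a band this gives $H_+-H_-=N_-\bigl(V\sigma_1V^{t}-\sigma_1\bigr)N_-^{t}$.

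You also cannot borrow the paper's treatment. It asserts that the band terms cancel because $V(\lambda_+)=V^{-1}(\lambda_-)$. On $\Sigma_1^r\setminus\Sigma_1^l$ the formula above gives $H_+-H_-=-4i\,r_2e^{-2i\theta}N_1^2$, which does not vanish unless $r_2\equiv0$. Indeed, the symmetries of RH problem~\ref{RHP:X} require $V\overline{V}=I$ on $i\mathbb{R}_+$, which makes $r_1,r_2$ real. Cancellation would instead need $V\sigma_1V^{t}=\sigma_1$, which is false for any nonzero triangular jump. So a sign argument like the one you sketch is really needed, and it is exactly what is missing.

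\paragraph{What you must supply.}
Drop the matrix lift, for three reasons.
\begin{enumerate}
\item The matrix objects here ($O$, $M$) in general carry a simple pole at $\lambda=0\in\mathbb{R}$, which does not fit the $L^2$ framework.
\item The symmetry does not remove that pole. It only forces the residue of the vector to be a multiple of $[1\ \ {-1}]$.
\item The rows of a homogeneous matrix solution do not individually satisfy $X^*=X\sigma_1$, so the diagonal of your $H$ has no sign on the bands.
\end{enumerate}
Instead, pose the Fredholm problem for the vector, in the invariant subspace of functions having both symmetries of RH problem~\ref{RHP:X}; this subspace contains $[1\ \ 1]$. Note also that boundedness of $\mathcal{C}_w$ needs $w\in L^\infty$, which the $L^2$ hypotheses alone do not give; Lemma~\ref{lem:r1r2} does supply it.

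The two symmetries combine to $\overline{N(\mu)}=N(-\bar\mu)$, so $N_-=\overline{N_+}$ on $i\mathbb{R}_+$. This gives the band terms explicitly, with $\lambda=is$:
\begin{enumerate}
\item On $\Sigma_1^r\setminus\Sigma_1^l$, $N_1$ is real and the band term is $4\,r_2(is)e^{-2i\theta(is)}N_1(is)^2\,ds$. Analogously, on $\Sigma_1^l\setminus\Sigma_1^r$ it is $4\,r_1e^{2i\theta}N_2^2\,ds$.
\item On the overlap, inserting $N_+=\overline{N_-}$ into $N_+=N_-V$ gives $4\bigl[r_2e^{-2i\theta}(\operatorname{Re}N_{1,-})^2+r_1e^{2i\theta}(\operatorname{Re}N_{2,-})^2\bigr]\,ds$.
\end{enumerate}

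The argument closes only if two conditions hold. First, $r_1,r_2>0$ on all of $\Sigma_1^l$ and $\Sigma_1^r$. Second, the bands must be oriented so that these terms enter with the same sign as $\int_{\mathbb{R}}\bigl[(1-|\rho|^2)|N_{1,-}|^2+|N_{2,-}|^2\bigr]\,d\lambda$; you leave this as ``$\pm$''.

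On the pure bands, positivity does come out of your suggested check: $\overline{a_2(\mu)}=a_2(-\bar\mu)$ makes the denominator of $r_2$ equal to $2|a_2(\lambda_-)|^2$, and likewise for $r_1$. On $\Sigma_1^l\cap\Sigma_1^r$, however, positivity of $r_1,r_2$ is not visible from \eqref{def:r1r2}. Until it is proved, the vanishing lemma does not follow.
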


\begin{proof}
Consider the $1\times2$ vector-valued RH problem for $N(x,t;\lambda)$, which satisfies the same jump conditions as $X(x,t;\lambda)$, but with the normalization $N(x,t;\lambda)=\mathcal{O}(\lambda^{-1})$ as $\lambda\to\infty$. More precisely,
\begin{equation*}
    \begin{cases}
        N_+(x,t;\lambda) = N_-(x,t;\lambda)\, V(x,t;\lambda), & \lambda \in \mathbb{R} \cup \Sigma^r \cup \Sigma^l, \\[6pt]
        N(x,t;\lambda) = \mathcal{O}(\lambda^{-1}), & \lambda \to \infty .
    \end{cases}
\end{equation*}
Suppose that 
\[
    H(\lambda)=N(\lambda)\bigl(N^*(\lambda)\bigr)^{t},
\]
where $(\cdot)^t$ denotes transpose. It is immediate that $H(\lambda)$ is analytic for 
$\lambda\in\mathbb{C}^+\setminus(\Sigma_1^r\cup\Sigma_1^l)$ and satisfies 
$H(\lambda)=\mathcal{O}(\lambda^{-2})$ as $\lambda\to\infty$. Consequently,
\begin{equation*}
    \int_{\mathbb{R}} H(\lambda)\,d\lambda 
    + \int_{\Sigma_1^l\cup\Sigma_1^r} \bigl(H_+(\lambda)-H_-(\lambda)\bigr)\, d\lambda 
    = 0.
\end{equation*}

For $\lambda\in\Sigma_1^r\cup\Sigma_1^l$, we have
\[
    H_+(\lambda)
    = N_+(\lambda) N_+^*(\lambda)^t
    = N_-(\lambda)V(\lambda_+)N_+^*(\lambda)^t,
\]
and
\[
    H_-(\lambda)
    = N_-(\lambda)N_-^*(\lambda)^t
    = N_-(\lambda)V^{-1}(\lambda_-)N_+^*(\lambda)^t,
\]
where we used the symmetry $N^*(\lambda)=N(\lambda)\sigma_1$.

From the definitions of $r_1(\lambda)$ and $r_2(\lambda)$ in \eqref{def:r1r2} and \eqref{def:r1r2}, we recall that
\[
    r_1(\lambda_+) = -r_1(\lambda_-),\quad \lambda\in\Sigma_1^l,
    \qquad 
    r_2(\lambda_+) = -r_2(\lambda_-),\quad \lambda\in\Sigma_1^r.
\]
Hence for $\lambda\in\Sigma_1^l\cup\Sigma_1^r$, it follows that 
\(
    V^{-1}(\lambda_-) = V(\lambda_+),
\)
and thus
\[
    \int_{\Sigma_1^l\cup\Sigma_1^r} \bigl(H_+(\lambda)-H_-(\lambda)\bigr)\,d\lambda = 0.
\]

By standard arguments, this implies that the only solution to the RH problem for $N(x,t;\lambda)$ is the trivial zero solution. Then, by the vanishing lemma of Zhou \cite{zhou_riemannhilbert_1989}, the RH problem for $X(x,t;\lambda)$ admits a unique solution.

\end{proof}

\begin{rmk}
	It is shown in \cite{boutet_de_monvel_inverse_2008} that when $n=0$ and $m_0=2$, the initial data $u_0(x)$ can be uniquely reconstructed from the scattering data. 
	In Lemma \ref{lem:vanishing}, we only use the fact that $\rho(\lambda)\in L^2(\mathbb{R})$, together with the boundary values of $r_1(\lambda)$ and $r_2(\lambda)$ belonging to $L^2(\Sigma_1^l)$ and $L^2(\Sigma_1^r)$, respectively. 
	This indicates that it suffices to assume that the initial data satisfy the condition $n=0$ and $m_0=2$ in \eqref{initial}, which is consistent with the result in \cite{boutet_de_monvel_inverse_2008}.
\end{rmk}

By Lemma~\ref{lem:asymptotic} and the definition of $M(x;\lambda)$ in 
\ref{def:M}, it follows that the solution $u(x,t)$ exists and is unique. 
Moreover, it can be recovered from the large--$\lambda$ asymptotics of 
$X(x,t;\lambda)$ via
\begin{equation*}
    u(x,t) = -2i\,\frac{d}{dx} 
    \left( \lim_{\lambda \to \infty} 
    \lambda \bigl( X_1(x,t;\lambda) - 1 \bigr) \right),
\end{equation*}
where $X_1(x,t;\lambda)$ denotes the first component of the vector 
$X(x,t;\lambda)$.

\appendix
\section{The periodic travelling wave solution of KdV equation}\label{appendix:periodic solution}
Consider a periodic travelling-wave solution of the KdV equation~\eqref{eq:KdV}. Such solutions have been widely investigated in the context of Whitham modulation theory; see, for example, \cite{grava_whitham_2016,gong_modulation_2025}.
Let \(u(x,t) = f(\xi)\), where  
\[
\xi = kx - \omega t + \phi_0,
\]
and \(f(\xi)\) is a \(2\pi\)-periodic function with wavenumber \(k\), frequency \(\omega\), and an arbitrary phase constant \(\phi_0\). Substituting the travelling–wave ansatz \(f(\xi)\) into~\eqref{eq:KdV} and integrating twice, we obtain
\begin{equation*}
    \frac{k^{2}}{2} f_{\xi}^{2}
    = f^{3} + \frac{\omega}{2k} f^{2} + A f + B,
\end{equation*}
where \(A\) and \(B\) are integration constants.  
Assume \(\beta_1 > \beta_2 > \beta_3\). Then the above identity can be rewritten as
\begin{equation}\label{eq:f}
    \frac{k^{2}}{2} f_{\xi}^{2}
    = (f + \beta_1)(f + \beta_2)(f + \beta_3).
\end{equation}

Since the KdV solution is real-valued, it follows from~\eqref{eq:f} that
\(-\beta_1 \le f \le -\beta_2\). Moreover, from~\eqref{eq:f} we have
\begin{equation}\label{eq:dudxi}
    \frac{k\, du}{\sqrt{2 (u+\beta_1)(u+\beta_2)(u+\beta_3)}} = d\xi .
\end{equation}
Because the period of the solution is \(2\pi\), we obtain
\begin{equation*}
    2k \int_{-\beta_1}^{-\beta_2}
    \frac{du}{\sqrt{2 (u+\beta_1)(u+\beta_2)(u+\beta_3)}}
    = \oint d\xi = 2\pi,
\end{equation*}
which implies
\[
k = \frac{\pi \sqrt{\beta_1 - \beta_3}}{\sqrt{2}\, K(m)},
\qquad
m^{2} = \frac{\beta_1 - \beta_2}{\beta_1 - \beta_3}.
\]

Let
\[
f = -\beta_1 + (\beta_1 - \beta_2) \sin^{2}\phi, \qquad 0 \le \phi \le \frac{\pi}{2}.
\]
Then, using~\eqref{eq:dudxi}, we obtain
\begin{equation*}
    \frac{\sqrt{2}\,k}{\sqrt{\beta_1 - \beta_3}}
    \int_{0}^{\phi} \frac{d\phi}{\sqrt{1 - m^{2} \sin^{2}\phi}}
    = \xi - \xi_{0}.
\end{equation*}
By the definition of the Jacobi elliptic function \(\mathrm{sn}\), we find that
\begin{equation*}
    \sin(\phi)=\mathrm{sn}\left(\frac{\sqrt{\beta_1 - \beta_3}}{\sqrt{2}k}(\xi-\xi_0)\, \middle|\, m\right),
\end{equation*}
Consequently, we can obtain that
\begin{equation}\label{solution:sn}
    f(\xi)
    = -\beta_1 + (\beta_1 - \beta_2)
      \mathrm{sn}^{2}\!\left(
        \frac{\sqrt{\beta_1 - \beta_3}}{\sqrt{2}}
        \left[x - 2(\beta_1 + \beta_2 + \beta_3)t + \frac{\phi_0}{k}\right]
        - \xi_0 \frac{\sqrt{\beta_1 - \beta_2}}{\sqrt{2}\,k}
      \, \middle|\, m\right).
\end{equation}
Using the arbitrariness of \(\phi_0\) and the identity \(\mathrm{dn}^{2} + m^{2}\mathrm{sn}^{2} = 1\), we may rewrite~\eqref{solution:sn} as
\begin{equation*}
    u(x,t)
    = f(\xi)
    = -\beta_3 - (\beta_1 - \beta_3)\,
    \mathrm{dn}^{2}\!\left(
        \frac{\sqrt{\beta_1 - \beta_3}}{\sqrt{2}}
        \left( x - 2(\beta_1 + \beta_2 + \beta_3)t - x_{0} \right)
        + K(m)\, \middle|\, m
    \right),
\end{equation*}
where \(x_{0} = -(\phi_0 - \xi_0 - \pi)/k\).
\section{The Lax spectrum of periodic finite gap solution}\label{appendix:Bloch spectrum}
For convenience, consider the stationary periodic solution of the KdV equation \eqref{eq:KdV}. 
Introduce the moving coordinate
\[
\zeta := x - 2(\beta_1+\beta_2+\beta_3)\,t,\qquad \tau := t,
\]
so that the solution \eqref{solution:periodic solution} can be written as
\begin{equation}\label{solution:staionary}
    u(\zeta,\tau)=u(\zeta)
    = -\beta_3 - (\beta_1-\beta_3)\,
      \mathrm{dn}^{2}\!\left(
        \frac{\sqrt{\beta_1-\beta_3}}{\sqrt{2}}(\zeta-x_0) + K(m)
        \,\middle|\, m
      \right),
\end{equation}
with $\beta_1>\beta_2>\beta_3$, and which is independent of \(\tau\) (stationary in the moving frame). This profile satisfies the stationary KdV equation
\begin{equation}\label{eq:kdv-stationary}
    v\,u_{\zeta} + 6\,u\,u_{\zeta} - u_{\zeta\zeta\zeta} = 0,\ v = 2(\beta_1+\beta_2+\beta_3).
\end{equation}

Furthermore, denote \( z = \lambda^{2} \). Then the Lax pair~\eqref{Lax:matrix} can be rewritten as
\begin{equation}\label{Lax-staionary}
\begin{aligned}
&\begin{cases}
\Psi_{\zeta} = \mathcal{L}\, \Psi, \\
\Psi_{\tau} = \mathcal{A}\, \Psi,
\end{cases} \\[4pt]
\mathcal{L} =
\begin{pmatrix}
0 & 1 \\
- z + u & 0
\end{pmatrix}, 
\qquad
&\mathcal{A} =
\begin{pmatrix}
- u_{\zeta} & 4z + 2u + v \\
 (u - z)(4z + 2u + v) - u_{\zeta\zeta} & u_{\zeta}
\end{pmatrix}.
\end{aligned}
\end{equation}
Since the time–evolution matrix \( \mathcal{A} \) in~\eqref{Lax-staionary} is independent of \( \tau \), we may look for a \(2\times 1\) eigenvector solution of the form
\begin{equation}\label{eigenvector}
    \psi(\zeta,\tau)
= e^{\, iy \tau}
\begin{pmatrix}
    \psi_{1}(\zeta) \\
    \psi_{2}(\zeta)
\end{pmatrix}.
\end{equation}

Plugging the eigenvector solution~\eqref{eigenvector} into the time–part equation in~\eqref{Lax-staionary}, we obtain
\begin{equation}\label{eq:phi12}
    (\mathcal{A}-yI)
    \begin{pmatrix}
        \psi_{1}(\zeta) \\
        \psi_{2}(\zeta)
    \end{pmatrix}
    =
    \begin{pmatrix}
        -u_{\zeta} - iy & 4z + 2u + v \\
        (u - z)(4z + 2u + v) - u_{\zeta\zeta} & u_{\zeta} - iy
    \end{pmatrix}
    \begin{pmatrix}
        \psi_{1}(\zeta) \\
        \psi_{2}(\zeta)
    \end{pmatrix}
    = 0.
\end{equation}
The Lax spectrum is defined as the set of all \(x \in \mathbb{R}\) for which the spectral problem~\eqref{Lax-staionary} admits a bounded eigenvector solution \cite{deconinck_orbital_2020,levitan_introduction_1975}.  
This requires that~\eqref{eq:phi12} have a nontrivial solution, which is equivalent to 
\begin{equation}
    \det(\mathcal{A} - yI) = 0.
\end{equation}

Thus, the parameter \(y\) must satisfy
\begin{equation}\label{eq:yu}
    y^{2}
    = 16 z^{3} + 8 v z^{2} + (v^{2} - C_{1}) z - C_{2},
\end{equation}
where
\[
C_{1} = 12u^{2} - 4u_{\zeta\zeta} + 4vu,
\qquad
C_{2} = 4u^{3} + u_{\zeta}^{2} + 4u^{2}v - u_{\zeta\zeta}v
        + u\left(-2u_{\zeta\zeta} + v^{2}\right).
\]
Since \(u\) satisfies the stationary KdV equation~\eqref{eq:kdv-stationary}, it follows that
\(C_{1}\) and \(C_{2}\) are constants. More precisely, substituting the periodic solution
\eqref{solution:staionary} into~\eqref{eq:yu}, we obtain
\begin{equation}\label{curve}
    y^{2}
    = 2\,(2z+\beta_{1}+\beta_{2})(2z+\beta_{1}+\beta_{3})
        (2z+\beta_{2}+\beta_{3}),
\end{equation}
which defines a two-sheeted genus-one Riemann surface. 
The branch cut is given by
\begin{equation*}
    \Big(-\infty, -\frac{\beta_1 + \beta_2}{2}\Big] 
      \;\cup\; 
      \Big[-\frac{\beta_1 + \beta_3}{2}, -\frac{\beta_1 + \beta_2}{2}\Big] .
\end{equation*}

\section*{Acknowledgments}
The author is deeply grateful to Professor Tamara Grava and Professor Robert Jenkins for their guidance and for proposing the research problem. The author also thanks Xiaofan Zhang and Zechuan Zhang for insightful discussions and valuable comments. Their related work~\cite{grava_direct_nodate} was prepared in parallel with the present manuscript. 

The  author acknowledge the support of the scholarship provided by the China Scholarship Council (CSC) under Grant No. 202406040149 and the GNFM-INDAM group and the research project Mathematical Methods in NonLinear Physics (MMNLP), Gruppo 4-Fisica Teorica of INFN.   The author further thanks Professor Deng-shan Wang and Cheng Zhu for their lecture notes on the KdV equation.

\bibliographystyle{plain}
\bibliography{references}

@unpublished{grava_direct_nodate,
	title = {Direct {Scattering} of the {Focusing} {Nonlinear} {Schrödinger} {Equation} with {Step}-like {Oscillatory} {Initial} {Data}},
	author = {Grava, Tamara and Jenkins, Robert and Zhang, Xiaofan and Zhang, Zechuan},
}

@article{jenkins_approximation_2025,
	title = {Approximation of the {Thermodynamic} {Limit} of {Finite}-{Gap} {Solutions} to the {Focusing} {NLS} {Hierarchy} by {Multisoliton} {Solutions}},
	volume = {406},
	number = {9},
	journal = {Communications in Mathematical Physics},
	publisher = {Springer},
	author = {Jenkins, Robert and Tovbis, Alexander},
	year = {2025},
	note = {ISBN: 0010-3616},
	pages = {214},
}

@book{levitan_introduction_1975,
	title = {Introduction to spectral theory: selfadjoint ordinary differential operators},
	volume = {39},
	isbn = {0-8218-8663-0},
	publisher = {American Mathematical Soc.},
	author = {Levitan, Boris Moiseevich and Sargsian, Ishkhan Saribekovich},
	year = {1975},
}

@article{gong_modulation_2025,
	title = {Modulation theory of soliton−mean flow in {Korteweg}–de {Vries} equation with box type initial data},
	volume = {134},
	issn = {0165-2125},
	url = {https://www.sciencedirect.com/science/article/pii/S0165212524001975},
	doi = {10.1016/j.wavemoti.2024.103467},
	journal = {Wave Motion},
	author = {Gong, Ruizhi and Wang, Deng-Shan},
	year = {2025},
	pages = {103467},
}

@article{gesztesy_one-dimensional_1997,
	title = {One-dimensional scattering theory for quantum systems with nontrivial spatial asymptotics},
	volume = {10},
	url = {https://doi.org/10.57262/die/1367525666},
	doi = {10.57262/die/1367525666},
	number = {3},
	journal = {Differential and Integral Equations},
	publisher = {Khayyam Publishing, Inc.},
	author = {Gesztesy, F. and Nowell, R. and Pötz, W.},
	year = {1997},
	pages = {521 -- 546},
}

@article{trogdon_numerical_2014,
	title = {A numerical dressing method for the nonlinear superposition of solutions of the {KdV} equation},
	volume = {27},
	copyright = {http://iopscience.iop.org/info/page/text-and-data-mining},
	issn = {0951-7715, 1361-6544},
	url = {https://iopscience.iop.org/article/10.1088/0951-7715/27/1/67},
	doi = {10.1088/0951-7715/27/1/67},
	language = {en},
	number = {1},
	urldate = {2025-11-25},
	journal = {Nonlinearity},
	author = {Trogdon, Thomas and Deconinck, Bernard},
	month = jan,
	year = {2014},
	pages = {67--86},
}

@article{deconinck_orbital_2020,
	title = {The {Orbital} {Stability} of {Elliptic} {Solutions} of the {Focusing} {Nonlinear} {Schrödinger} {Equation}},
	volume = {52},
	issn = {0036-1410, 1095-7154},
	url = {https://epubs.siam.org/doi/10.1137/19M1240757},
	doi = {10.1137/19M1240757},
	language = {en},
	number = {1},
	urldate = {2025-11-25},
	journal = {SIAM Journal on Mathematical Analysis},
	author = {Deconinck, Bernard and Upsal, Jeremy},
	month = jan,
	year = {2020},
	pages = {1--41},
}

@book{belokolos_algebro-geometric_nodate,
	address = {Berlin ;},
	title = {Algebro-geometric approach to nonlinear integrable equations},
	isbn = {3540502653 (Berlin : acid-free), 0387502653 (New York : acid-free), 9783540502654 (Berlin : acid-free), 9780387502656 (New York : acid-free)},
	publisher = {Springer-Verlag,},
	editor = {Belokolos, E. D.},
	note = {Publication Title: Springer series in nonlinear dynamics},
}

@article{dyachenko_primitive_2016,
	title = {Primitive potentials and bounded solutions of the {KdV} equation},
	volume = {333},
	issn = {01672789},
	url = {https://linkinghub.elsevier.com/retrieve/pii/S0167278916301518},
	doi = {10.1016/j.physd.2016.04.002},
	language = {en},
	urldate = {2025-10-02},
	journal = {Physica D: Nonlinear Phenomena},
	author = {Dyachenko, S. and Zakharov, D. and Zakharov, V.},
	month = oct,
	year = {2016},
	pages = {148--156},
}

@article{gurevich_nonstationary_1974,
	title = {Nonstationary structure of a collisionless shock wave},
	volume = {38},
	doi = {http://jetp.ras.ru/cgi-bin/dn/e_038_02_0291.pdf},
	language = {en},
	number = {2},
	journal = {Soviet Physics – JETP},
	author = {Gurevich, A. V. and PitaevskiT, L. P.},
	month = feb,
	year = {1974},
	pages = {291--297},
}

@article{gardner_method_1967,
	title = {Method for {Solving} the {Korteweg}-{deVries} {Equation}},
	volume = {19},
	copyright = {http://link.aps.org/licenses/aps-default-license},
	issn = {0031-9007},
	url = {https://link.aps.org/doi/10.1103/PhysRevLett.19.1095},
	doi = {10.1103/PhysRevLett.19.1095},
	language = {en},
	number = {19},
	urldate = {2025-08-21},
	journal = {Physical Review Letters},
	author = {Gardner, Clifford S. and Greene, John M. and Kruskal, Martin D. and Miura, Robert M.},
	month = nov,
	year = {1967},
	pages = {1095--1097},
}

@article{egorova_cauchy_2009,
	title = {On the {Cauchy} problem for the {Korteweg}–de {Vries} equation with steplike finite-gap initial data: {I}. {Schwartz}-type perturbations},
	volume = {22},
	issn = {0951-7715, 1361-6544},
	shorttitle = {On the {Cauchy} problem for the {Korteweg}–de {Vries} equation with steplike finite-gap initial data},
	url = {https://iopscience.iop.org/article/10.1088/0951-7715/22/6/009},
	doi = {10.1088/0951-7715/22/6/009},
	language = {en},
	number = {6},
	urldate = {2025-06-10},
	journal = {Nonlinearity},
	author = {Egorova, Iryna and Grunert, Katrin and Teschl, Gerald},
	month = jun,
	year = {2009},
	pages = {1431--1457},
}

@article{boutet_de_monvel_inverse_2008,
	title = {Inverse scattering theory for one-dimensional {Schrödinger} operators with steplike finite-gap potentials},
	volume = {106},
	copyright = {http://www.springer.com/tdm},
	issn = {0021-7670, 1565-8538},
	url = {http://link.springer.com/10.1007/s11854-008-0050-4},
	doi = {10.1007/s11854-008-0050-4},
	language = {en},
	number = {1},
	urldate = {2025-06-10},
	journal = {Journal d'Analyse Mathématique},
	author = {Boutet De Monvel, Anne and Egorova, Iryna and Teschl, Gerald},
	month = jan,
	year = {2008},
	pages = {271--316},
}

@article{trogdon_riemannhilbert_2013,
	title = {A {Riemann}–{Hilbert} problem for the finite-genus solutions of the {KdV} equation and its numerical solution},
	volume = {251},
	copyright = {https://www.elsevier.com/tdm/userlicense/1.0/},
	issn = {01672789},
	url = {https://linkinghub.elsevier.com/retrieve/pii/S0167278913000444},
	doi = {10.1016/j.physd.2013.01.018},
	language = {en},
	urldate = {2025-06-06},
	journal = {Physica D: Nonlinear Phenomena},
	author = {Trogdon, Thomas and Deconinck, Bernard},
	month = may,
	year = {2013},
	pages = {1--18},
}

@article{nabelek_algebro-geometric_2020,
	title = {Algebro-geometric finite gap solutions to the {Korteweg}–de {Vries} equation as primitive solutions},
	volume = {414},
	issn = {01672789},
	url = {https://linkinghub.elsevier.com/retrieve/pii/S0167278919304610},
	doi = {10.1016/j.physd.2020.132709},
	language = {en},
	urldate = {2025-05-08},
	journal = {Physica D: Nonlinear Phenomena},
	author = {Nabelek, Patrik V.},
	month = dec,
	year = {2020},
	pages = {132709},
}

@article{egorova_cauchy_2011,
	title = {On the {Cauchy} problem for the {Kortewegde} {Vries} equation with steplike finite-gap initial data {II}. {Perturbations} with finite moments},
	volume = {115},
	copyright = {http://www.springer.com/tdm},
	issn = {0021-7670, 1565-8538},
	url = {http://link.springer.com/10.1007/s11854-011-0024-9},
	doi = {10.1007/s11854-011-0024-9},
	language = {en},
	number = {1},
	urldate = {2025-05-08},
	journal = {Journal d'Analyse Mathématique},
	author = {Egorova, Iryna and Teschl, Gerald},
	month = jun,
	year = {2011},
	pages = {71--101},
}

@article{zhou_riemannhilbert_1989,
	title = {The {Riemann}–{Hilbert} {Problem} and {Inverse} {Scattering}},
	volume = {20},
	issn = {0036-1410, 1095-7154},
	url = {http://epubs.siam.org/doi/10.1137/0520065},
	doi = {10.1137/0520065},
	language = {en},
	number = {4},
	urldate = {2024-12-16},
	journal = {SIAM Journal on Mathematical Analysis},
	author = {Zhou, Xin},
	month = jul,
	year = {1989},
	pages = {966--986},
}

@book{lawden_elliptic_1989,
	address = {New York, NY},
	series = {Applied {Mathematical} {Sciences}},
	title = {Elliptic {Functions} and {Applications}},
	volume = {80},
	copyright = {http://www.springer.com/tdm},
	isbn = {978-1-4419-3090-3 978-1-4757-3980-0},
	url = {http://link.springer.com/10.1007/978-1-4757-3980-0},
	doi = {10.1007/978-1-4757-3980-0},
	language = {en},
	urldate = {2024-12-02},
	publisher = {Springer New York},
	author = {Lawden, Derek F.},
	editor = {John, F. and Marsden, J. E. and Sirovich, L.},
	year = {1989},
}

@incollection{grava_whitham_2016,
	title = {Whitham modulation equations and application to small dispersion asymptotics and long time asymptotics of nonlinear dispersive equations},
	volume = {926},
	url = {http://arxiv.org/abs/1701.00069},
	doi = {10.1007/978-3-319-39214-1_10},
	language = {en},
	urldate = {2024-11-07},
	author = {Grava, Tamara},
	year = {2016},
	note = {arXiv:1701.00069 [math-ph]},
	pages = {309--335},
}

@article{girotti_rigorous_2021,
	title = {Rigorous {Asymptotics} of a {KdV} {Soliton} {Gas}},
	volume = {384},
	issn = {1432-0916},
	url = {https://doi.org/10.1007/s00220-021-03942-1},
	doi = {10.1007/s00220-021-03942-1},
	number = {2},
	journal = {Communications in Mathematical Physics},
	author = {Girotti, M. and Grava, T. and Jenkins, R. and McLaughlin, K. D. T.-R.},
	month = jun,
	year = {2021},
	pages = {733--784},
}

\end{document}